\documentclass[sn-mathphys,Numbered]{sn-jnl}


\usepackage{graphicx}%
\usepackage{multirow}%
\usepackage{amsmath,amssymb,amsfonts}%
\usepackage{amsthm}%
\usepackage{mathrsfs}%
\usepackage[title]{appendix}%
\usepackage{xcolor}%
\usepackage{textcomp}%
\usepackage{manyfoot}%
\usepackage{booktabs}%
\usepackage{algorithm}%
\usepackage{algorithmicx}%
\usepackage{algpseudocode}%
\usepackage{listings}%

\usepackage{cleveref}
\usepackage{booktabs}
\usepackage[framemethod=TikZ]{mdframed}
\usepackage{xpatch}

\usepackage{tikz}
\usepackage{pgfplots}
\usetikzlibrary{positioning}
\usetikzlibrary{plotmarks}
\newlength\figureheight
\newlength\figurewidth




\theoremstyle{thmstyleone}%
\newtheorem{thm}{Theorem}
\newtheorem{lemma}{Lemma}

\theoremstyle{thmstyletwo}%

\theoremstyle{thmstylethree}%

\raggedbottom

\newcommand{\matrx}[1]{\mathbf{#1}}
\renewcommand{\vec}[1]{\mathbf{#1}}
\newcommand{\vecT}[1]{\mathbf{#1}^*}
\newcommand{\R}{\mathbb{R}}
\newcommand{\K}{\mathcal{K}}
\newcommand{\N}{\mathcal{N}}
\newcommand{\T}{\mathcal{T}}
\newcommand{\E}{\mathcal{E}}

\newcommand{\sign}{\text{sign}}

\newcommand{\mM}{\matrx{M}}

\newcommand{\mMs}{\matrx{M}^\mathrm{S}}

\newcommand{\Anorm}[1]{\| #1 \|^2_{\matrx{A}_0}}

\newcommand{\mMKsj}[1]{\matrx{M}_{#1,K}^\mathrm{S}}
\newcommand{\mMsKj}[1]{\matrx{M}_{#1,K}^\mathrm{S}}

\newcommand{\inter}{\mathrm{int}}
\newcommand{\RHS}{\mathrm{RHS}}
\newcommand{\JUMP}{\mathrm{JUMP}}
\newcommand{\cls}{\mathrm{cls}}
\newcommand{\HS}{\mathrm{HS}}
\newcommand{\TI}{\mathrm{TI}}
\newcommand{\BH}{\mathrm{BH}}
\newcommand{\TR}{\mathrm{TR}}
\newcommand{\INV}{\mathrm{INV}}

\newcommand{\interior}[1]{%
  {\kern0pt#1}^{\mathrm{o}}%
}
\newcommand{\meas}[1]{|#1|}

\newcommand{\Covrlp}{C_\mathrm{ovrlp}}
\newcommand{\osc}{\mathrm{osc}}

\usepackage[normalem]{ulem} 

\newcommand{\notred}[1]{}

\newcommand{\notblue}[1]{}
\definecolor{gray}{rgb}{0.5, 0.5, 0.5}

\makeatletter
\xpatchcmd{\endmdframed}
  {\aftergroup\endmdf@trivlist\color@endgroup}
  {\endmdf@trivlist\color@endgroup\@doendpe}
  {}{}
\makeatother

\mdfdefinestyle{total}{linecolor=black,linewidth=0pt,skipabove=0.5cm, skipbelow=0.5cm}

\newtheorem{estim}{Estimate on total error}
 \newenvironment{est}
   {\begin{mdframed}[style=total]\begin{estim}}
   {\end{estim}\end{mdframed}}

\mdfdefinestyle{alg}{linecolor=gray,linewidth=0pt,skipabove=0.3cm,skipbelow=0.25cm}

\newtheorem{estimalg}{Estimate on algebraic error}
 \newenvironment{estalg}
   {\begin{mdframed}[style=alg]\begin{estimalg}}
   {\end{estimalg}\end{mdframed}}


\begin{document}

\title[A posteriori error estimates based on multilevel decompositions with large problems on the coarsest level]{A posteriori error estimates based on multilevel decompositions with large problems on the coarsest level}


\author*[1]{\fnm{Petr} \sur{Vacek}}\email{vacek@karlin.mff.cuni.cz}

\author[2]{\fnm{Jan} \sur{Papež}}\email{papez@math.cas.cz}

\author[1]{\fnm{Zdeněk} \sur{Strakoš}}\email{strakos@karlin.mff.cuni.cz}

\affil*[1]{\orgdiv{Department of Numerical Mathematics}, \orgname{Faculty of Mathematics and Physics, Charles University}, \orgaddress{\street{Sokolovská 83}, \city{Prague}, \postcode{186 75}, \country{Czech Republic}}}

\affil[2]{\orgdiv{Institute of Mathematics}, \orgname{Czech Academy of Sciences}, \orgaddress{\street{Žitná 25}, \city{Prague}, \postcode{115 67}, \country{Czech Republic}}}


\abstract{
Multilevel methods represent a powerful approach in numerical solution of partial differential equations. The multilevel structure can also be used to construct estimates for total and algebraic errors of computed approximations. This paper deals with residual-based error estimates that are based on properties of quasi-interpolation operators, stable-splittings, or frames. We focus on the settings where the system matrix on the coarsest level is still large and the associated terms in the estimates can only be approximated. 
We show that the way in which the error term associated with the coarsest level is approximated is substantial. It can significantly affect both the efficiency (accuracy) of the overall error estimates and their robustness with respect to the size of the coarsest problem.
The newly proposed approximation of the coarsest-level term is based on using the conjugate gradient method with an appropriate stopping criterion. We prove that the resulting estimates are efficient and robust with respect to the size of the coarsest-level problem. Numerical experiments illustrate the theoretical findings.
}

\keywords{a posteriori estimates, multilevel hierarchy, residual-based error estimator, large coarsest-level problem, iterative computation}

\pacs[MSC Classification]{
65N15, 
65N55, 
65N22, 
65N30, 
65F10. 
}

\maketitle

\section{Introduction}\label{sec:intro}

Multilevel methods \cite{Brandt2011,Hackbusch2016,Briggs2000,Trottenberg2001} are frequently used for solving systems of linear equations obtained from the discretization of partial differential equations (PDEs). They are applied either as standalone iterative solvers or as preconditioners.
In \emph{geometric} multigrid methods the hierarchy of systems is obtained by discretizations of an infinite dimensional problem on a sequence of nested meshes. In \emph{algebraic} multigrid methods the coarse systems are constructed using algebraic properties of the matrix.
Each multigrid cycle contains smoothing on fine levels, prolongation, and solving a system of linear equations on the coarsest level. 
Smoothing is typically done by a few iterations of a stationary iterative method. 
If the size permits, it is typical to solve the coarsest-level problem using a direct method based on LU or Cholesky decomposition. Although this does not provide a computed result with a zero error, many theoretical results on multigrid methods are proved under the assumption that the coarsest-level problem is solved exactly; see, e.g., \cite{Xu1992,Yserentant1993}.

Multilevel methods can in practice also use hierarchies where the problem on the coarsest level is large and can only be solved approximately to a properly chosen accuracy, e.g., by Krylov subspace methods, or direct methods can be used with low-rank matrix approximations. This arises for problems on complicated domains or for large-scale problems solved on modern parallel computers; see, e.g., \cite{Buttari2022}. Effects of approximate coarsest-level solves on convergence of multigrid method were analysed, e.g., in \cite{Notay2007,Xu2022, VacekCarsonSoodhalter23}.

The multilevel structure can also be used to construct estimates of total and algebraic errors; see, e.g., \cite{Becker1995,Ruede1993,Harbrecht2016,Huber2019,Papez2017,MirPapVoh21}. 
The estimates of \cite{Becker1995,Ruede1993,Harbrecht2016,Huber2019,Papez2017,MirPapVoh21} are, however, not suited for multilevel hierarchies with large coarsest-level problems, which are being used for complicated domains and/or in parallel implementations.
They either assume that the coarsest-level problem is solved exactly \cite{Becker1995,Papez2017,MirPapVoh21}, or they require computation of the term $\vecT{r}_0\matrx{A}^{-1}_0\vec{r}_0$ associated with the coarsest level, where~$\matrx{A}_0$ is the coarsest-level system matrix and $\vec{r}_0$ a projection of a finest-level residual to the coarsest level, \cite{Ruede1993,Harbrecht2016,Huber2019}.
The term $\vecT{r}_0\matrx{A}^{-1}_0\vec{r}_0$ can be approximated, e.g., using the conjugate gradient method (CG) as in \cite{Huber2019}, or by replacing the system matrix with a diagonal matrix as in \cite{Harbrecht2016}.
Then proving efficiency and robustness of estimates becomes an important challenge.

In this text, we discuss properties of the error estimates in multilevel settings where the system matrix on the coarsest level is large and the associated terms are only approximated.
We consider several a~posteriori estimates on total and algebraic errors based on decomposing the error into a sequence of finite element subspaces and using either approximation properties of quasi-interpolation operators \cite{Becker1995}, stable splittings \cite{Ruede1993,Huber2019}, or so-called frames \cite{Harbrecht2016}.
The main contribution of this paper is a new procedure for approximating the term associated with the coarsest level that is based on using the conjugate gradient method with an appropriate stopping criterion. We prove that the resulting estimates are \emph{efficient} \emph{and robust} with respect to the size of the coarsest-level problem.

The text is organized as follows. First, we present a model problem, its discretization, and the notation used in the text. Derivations of error estimates for total and algebraic errors are presented in \Cref{sec:estimates}. In \Cref{sec:effectivity}, we comment on the efficiency of the bounds. 
Main results are presented in \Cref{sec:evaluation} where we describe how to replace the (uncomputable) terms in the estimates by a computable approximation and present an adaptive procedure for approximating the coarsest-level term $\vecT{r}_0\matrx{A}^{-1}_0\vec{r}_0$.
Numerical illustrations are given in \Cref{sec:numexp} and conclusions in \Cref{sec:conclusions}.
Not to interrupt the presentation, we present detailed theoretical results, which are used in the derivation of the estimates, in Appendices.
Appendix~\ref{sec:PDElemmas} recalls some standard results from PDE and finite element method (FEM) analyses. Appendix~\ref{sec:quasi-interpolation-operator} presents properties of the quasi-interpolation operator, and Appendix~\ref{sec:splitting} recalls results on stable-splittings and frames. 
This enables an easy comparison of different results that are presented separately in literature.


\section{Model problem, setting, and notation}
\label{sec:notation}

The estimates will be studied for a standard model problem, a prototype for elliptic equations, the Poisson's problem with homogeneous Dirichlet boundary conditions.
Let $\Omega\subset \R^d$, $d = 2,3$, be an open bounded polytope with a Lipschitz-continuous boundary. Given $f \in L^2(\Omega)$, the weak form reads: find \mbox{$u\in H^1_0(\Omega)$} such that 
\begin{equation} \label{eq:model_problem_weak}
    \int_\Omega \nabla u \cdot \nabla v = \int_\Omega fv \qquad \forall v \in  H^1_0(\Omega).
\end{equation}
In this section, we introduce notation for meshes and finite element spaces, and the multilevel framework. Further, we present the Galerkin finite element discretization of the model problem on a particular level, define its approximate solution, the error, and (scaled) residuals associated with individual levels of the multilevel hierarchy.

Similarly to a standard literature, we introduce some simplifying assumptions, e.g., on the model problem or mesh hierarchies. This is done in order to reduce the complexity of proofs (that are already quite technical) and to allow us to refer to particular results in the literature. We use a standard notation for Lebesgue and Sobolev (Hilbert) spaces, norms, and seminorms; see, e.g., \cite{BookBrezis2011}.


\subsection{Notation for a single level}
\label{sec:triangulation}

Throughout the paper, we consider simplicial meshes of $\Omega$, matching in the sense that for two distinct elements of a mesh~$\T$ (triangles in 2D, or tetrahedra in 3D), their intersection is either an empty set or a common node (vertex), edge, or face.
By $\E_\T$ and $\N_\T$ we denote the set of $(d-1)$-dimensional faces and set of nodes in the mesh~$\T$, respectively.
By $\E_{\T,\inter} $ we denote the set of all faces that are not on the boundary $\partial\Omega$. 
By $\K_\T \subset \N_\T$ we denote the set of all nodes in the mesh $\T$, which are not on the boundary, i.e., \emph{free nodes}.
For any element (simplex) $K \in \T$, $\E_K \subset \E_\T$ denotes the set of faces of the element $K$, $\N_K \subset \N_\T$ denotes the set of nodes of the element $K$,  $\E_{K,\inter} = \E_K \cap \E_{\T,\inter}$, and $\K_K = \N_K \cap \K_\T $.
We use hash to denote the cardinality of a set, for example $\#\K_\T$ denotes the number of free nodes in the mesh $\T$. For the ease of presentation, we will assume that the nodes in $\N_\T$ are ordered such that nodes $1, \ldots,  \#\K_\T$ belong to $\K_\T$, i.e., we first have the free nodes and then the nodes on the boundary.

By $h_K$ we denote the diameter of $K \in \T$ 
and define a mesh-size $h_{\T}\in L^{\infty}(\Omega)$ as 
\begin{align*}
h_{\T}(x) &= h_K, \quad x \in K, \quad \forall K\in \T.
\end{align*}
Similarly $h_\omega$ denotes the diameter of a domain $\omega$. We in particular use $h_\Omega$, the diameter of the domain $\Omega$. By $|\omega|$ we denote the Lebesgue measure of a domain $\omega$.

For any element $K \in \T$, $\omega_K$ denotes the patch of elements that share at least one common vertex with $K$, i.e.,
\begin{equation*}
\omega_K = \bigcup_{K' \in \T;  K'\cap K \neq \emptyset} K'.
\end{equation*}
By $\rho_K$ we denote the diameter of the largest ball inscribed in the element $K$.

For every node $z \in \N_\T$, let $\phi_z$ be the continuous piecewise linear function (\emph{hat function}) that has a value one at node $z$ and vanishes at all the other nodes in $\N_\T$.
Let $S_{\T}$ denote the space of continuous, piecewise linear functions, 
\[
    S_{\T} = \{ v \in H^1(\Omega), v|_K \in \mathbb{P}^1(K), \ \forall K\in \T\} = \mbox{span}\{ \phi_z,\ z \in \N_\T \}
\]
and $V_{\T} \subset S_{\T}$ the subspace of functions vanishing on the boundary~$\partial\Omega$, 
\[
    V_{\T} = \{ v \in H^1_0(\Omega), v|_K \in \mathbb{P}^1(K), \ \forall K\in \T\} = \mbox{span}\{ \phi_z,\ z \in \K_\T \}.
\] 
We write the basis of $V_{\T}$ as $\Phi_{\T}=(\phi_1,\ldots, \phi_{\# \K_\T})$.

One of the key properties of a mesh that affects the size of the constants in the estimates derived below in this text is the so-called \emph{shape regularity} of the mesh. This can be quantified by the shape-regularity constant, i.e., the smallest $\gamma_{\T}>0$ satisfying
\begin{equation}\label{eq:shaperegularity}
\frac{h_K}{\rho_K}\leq \gamma_{\T}, \quad \forall K \in \T;
\end{equation}
see, e.g., \cite[p.~484]{Scott1990}.


\subsection{Multilevel framework}
\label{sec:multilevel}

As the title of the paper suggests, we will work with a sequence of levels $j = 0, 1, \ldots, J$. For some parts of the theory, we will consider also infinite sequences of levels $j = 0,1, \ldots, J, \ldots$. To simplify the previously introduced notation, we will replace in the subscripts $\T_j$ by $j$ to denote objects associated with the mesh $\T_j$ on the $j$th level.  

Let $\T_0$ be an initial mesh of $\Omega$. We consider a sequence of meshes $\T_1,\T_2,\ldots$ obtained by successive uniform dyadic refinements of $\T_0$, i.e., each element is refined into $2^d$ elements (congruent triangles in 2D, for a proper nondegenerating 3D mesh refinement; see, e.g.,~\cite{Zha95}). We recall that {$S_j$ and $V_j$, $j=0,1,\ldots$}, are the finite element spaces of continuous piecewise linear functions on $\T_j$, respectively spaces of continuous piecewise linear functions on $\T_j$ that vanish on the boundary $\partial \Omega$. These spaces are nested, i.e.,
\begin{align*}
& S_0 \subset S_1 \subset \cdots \subset H^1(\Omega),  
& V_0 \subset V_1 \subset \cdots \subset H^1_0(\Omega).    
\end{align*}
On each level~$j$, we consider a quasi-interpolation operator
\[
    I_{V_j} : L^1(\Omega) \to V_j
\]
with the definition and properties described in detail in Appendix~\ref{sec:quasi-interpolation-operator}.

Due to the uniform refinement, the mesh sizes $h_j$ of $\T_j$, $j\geq 0$, satisfy $h_j = 2^{-j}h_0$.
Moreover, the uniform refinement assures that the shape-regularity constants $\gamma_j$ of the meshes are the same on all levels in 2D, i.e., $\gamma_0 = \gamma_j$, $j \in \mathbb{N}$, and that in 3D there exists a constant $C_{\mathrm{3D}}>0$ such that $\gamma_j \leq C_{\mathrm{3D}} \gamma_0$,  $j \in \mathbb{N}$; see \cite{Zha95}.


\subsection{Discretization, approximate solution, and residuals}
\label{sec:residuals}

Discretizing the model problem \eqref{eq:model_problem_weak} on the subspace $V_J$,  for some $J\geq0$, using the Galerkin method reads as: find $u_J \in  V_J$ such that
\begin{equation}
\label{eq:dicretized_problem}
\int_\Omega \nabla u_J \cdot \nabla w_J = \int_\Omega fw_J, \quad \forall w_J \in V_J.
\end{equation}

Let $v_J \in V_J$ be a (computed) approximation of the discrete solution $u_J$. Our goal is to bound the energy norm of the total error $e= u-v_J$ using computable quantities involving $v_J$ and $f$. 
The squared energy norm of the error $\| \nabla e \|^2$ can be expressed as
\begin{equation*}
    \| \nabla e \|^2 = \| \nabla (u-v_J) \|^2 = \int_{\Omega}\nabla (u-v_J) \cdot \nabla (u-v_J)  = \int_{\Omega} f (u-v_J) - \nabla v_J \cdot \nabla (u-v_J).
\end{equation*}
Denote by $\left( H^1_0(\Omega)\right)^{\#}$ the dual space to $H^1_0(\Omega)$ and define the residual $r\in \left( H^1_0(\Omega)\right)^{\#}$ as 
\begin{equation}
\label{eq:funcresidual}
    \langle r,w \rangle = \int_{\Omega} f w  - \nabla v_J \cdot \nabla w, \quad \forall w \in H^1_0(\Omega). 
\end{equation}
Then \eqref{eq:funcresidual}
yields the so-called residual equation
\begin{equation}\label{eq:residual_equations}
    \| \nabla e \|^2 = \langle r,e \rangle,
\end{equation}
which is the key formula for the development of error bounds presented below.
Moreover, it can be shown (see, e.g.,  \cite[Section~1.4.1]{Verfurth2013}) that
\begin{equation*}
\| \nabla e \| = \| r\|_{\left(  H^1_0(\Omega )\right )^{\#}}.
\end{equation*}

In order to derive computable estimates we consider Riesz representations of the infinite-dimensional residual $r$ in the finite-dimensional spaces $V_j$, $j= 0,1,\ldots$.
In particular, let $r_j \in V_j$, $j=1,\ldots$, be the Riesz representation of $r$ in the space $V_j$ with the scaled $L^2$-inner product, i.e.,
\begin{equation}
\label{eq:r_j}
\langle r,w_j \rangle = \int_{\Omega} h^{-2}_{j} r_j w_j, \quad \forall w_j \in V_j, 
\end{equation}
and let $r_0 \in V_0$ be the Riesz representation of the residual $r$ in the space $V_0$ with the $H^1_0$-inner product, i.e.,
\begin{equation}
\label{eq:r_0}
\langle r,w_0 \rangle = \int_{\Omega} \nabla r_0 \cdot \nabla w_0,\quad  \forall  w_0 \in V_0.
\end{equation}
These definitions are used in  \cite[Section~2.6]{Ruede1993} where $r_j$ are called \emph{scaled residuals}. 
In \cite[Section~5]{Becker1995} the authors use Riesz representations of $r$ in the spaces~$V_j$, $j=1,\ldots,J$, with the classical $L^2$-inner products and call them discrete residuals. The different definition we use results in a slightly different form of the estimates below in comparison to \cite[Section~5]{Becker1995}.

\section{Residual-based error estimates}
\label{sec:estimates}

In this section we recall several published error estimates with their derivation.
We first recall the standard residual-based error estimator for the discretization error in a single-level setting assuming exact algebraic computations or to steer an adaptive mesh refinement.

Consider the model problem~\eqref{eq:model_problem_weak} discretized on a level $J\geq0$ of a multilevel hierarchy as in \Cref{sec:multilevel}. 
The classical residual-based estimator (see, e.g., \cite[Section~3]{Ainsworth1997}, \cite[Section~1.4]{Verfurth2013}) is for a (computed) approximation $v_J \in V_J$ defined as
\begin{align*}
\eta^2_{J} &= \left( \eta^{\RHS}_{J} \right)^2 + \left( \eta^{\JUMP}_{J} \right)^2 + \left( \osc_{J} \right)^2, \\
\left ( \eta^{\RHS}_{J} \right)^2 &=  \sum_{K\in \T_J} h^2_K \| f_{K} \|^2_K, 
\\
\left (\eta^{\JUMP}_{J}\right)^2  &= \frac{1}{2}\sum_{K\in \T_J} h_K \!\!\! \sum_{E \in \E_{K,\inter}} \!\!\! \|  \left[ \nabla v_J \right] \|^2_{E}, \\
\left( \osc_{J} \right)^2 & = \sum_{K\in \T_J} h^2_K \| f - f_{K} \|^2_K,
\end{align*}
where $\left[ \cdot \right]$ denotes the jump of a piecewise constant function over the $(d-1)$-dimensional faces (faces in 3D and edges in 2D) and $f_K$ is the mean value of~$f$ on~$K$. Other choices of~$f_K$ are also possible; see, e.g., \cite{Harbrecht2016}.

The following result (see, e.g., \cite[Lemma~3]{Becker1995}, \cite[Section~4]{Stevenson2007}, or \cite[Section~1.4]{Verfurth2013}) will be useful below. There exists a constant $C_{\cls}>0$ depending only on the dimension $d$ and the shape-regularity parameter $\gamma_0$ such that 
\begin{equation}\label{eq:residual_finest_level_quasi_interpol}
\langle r, w -I_{V_J} w \rangle \leq C_{\cls}  \eta_{J} \|\nabla w\|, \quad \forall w \in H^1_0(\Omega).
\end{equation}
Note that if $v_J$ is equal to the Galerkin solution~$u_J$, the associated residual \mbox{$r=r(u_J)$} satisfies the Galerkin orthogonality on the finest level, i.e.,
\begin{equation}\label{eq:galerkin_orthogonality_finest}
\langle r, w_J \rangle = 0, \quad \forall w_J \in V_J.
\end{equation}
Then
\begin{equation*}
\|\nabla ( u - u_J) \|^2    = \langle r, ( u - u_J) -I_{V_J} ( u - u_J) \rangle,
\end{equation*}
and using \eqref{eq:residual_finest_level_quasi_interpol} for $w = u-u_J$ yields the standard bound on the discretization error
\begin{equation*}
\|\nabla ( u - u_J) \|  \leq  C_{\cls}  \eta_{J}(u_J).
\end{equation*}


\subsection{Estimates of Becker, Johnson \& Rannacher}
\label{sec:estimBJR}

The following derivation is motivated by \cite{Becker1995} and uses decomposition of the error via quasi-interpolation operators.  
Considering the residual equation \eqref{eq:residual_equations} and writing the error $e = u - v_J$ as
\begin{equation}\label{eq:BJR_decomposition}
e = e - I_{V_J} e + \sum^{J}_{j=1} \left(  I_{V_j} e - I_{V_{j-1}} e  \right) + I_{V_0} e,
\end{equation}
yields
\begin{equation}\label{eq:BJR_decomposition_norms}
\| \nabla e \|^2  = \langle r,e \rangle
= \langle r, e - I_{V_J} e \rangle + \sum^{J}_{j=1}  \langle r, I_{V_j} e  -I_{V_j-1} e \rangle + \langle r, I_{V_0} e \rangle.
\end{equation}
The first term on the right-hand side of \eqref{eq:BJR_decomposition_norms} can be bounded using \eqref{eq:residual_finest_level_quasi_interpol} as
\begin{equation}\label{eq:finest_level_residual_bound}
\langle r, e - I_{V_J} e \rangle \leq C_{\cls} \eta_{J} \| \nabla e \|.
\end{equation}

\noindent
The second and the third term on the right-hand side of \eqref{eq:BJR_decomposition_norms} can be rewritten using the scaled residuals~\eqref{eq:r_j}, \eqref{eq:r_0} and subsequently bounded as
\begin{align}
\begin{split}\label{eq:BJR_decomposition_scaled_res}
&\sum^J_{j=1}\langle r, I_{V_j} e  - I_{V_{j-1}} e \rangle + \langle r, I_{V_0} e \rangle =  \sum^J_{j=1} \int_{\Omega}h^{-2}_{j} r_{j} (I_{V_{j}} e  - I_{V_{j-1}} e) + \int_{\Omega} \nabla r_0\cdot \nabla I_{V_{0}} e \\
& \qquad \qquad\qquad \qquad\qquad\leq   \sum^J_{j=1} \| h^{-1}_{j} r_j\| \cdot \| h^{-1}_{j} ( I_{V_{j}} e -I_{V_{j-1}} e ) \| + \| \nabla r_0\| \cdot \| \nabla I_{V_{0}} e \|.
\end{split}
\end{align}
Further, using the bound on the difference of the quasi-interpolants on two consecutive levels (Appendix~\ref{sec:quasi-interpolation-operator}, \Cref{lemma:difference-quasi-interpolation}) and the stability of the quasi-interpolation operator on the coarsest level in the $H^1_0(\Omega)$-norm (Appendix~\ref{sec:quasi-interpolation-operator}, \Cref{thm:quasi-interpolation-to_V-global-estimates}, inequality \eqref{eq:quasi-interpolation-to_V-global_grad}), we get
\begin{align}
\begin{split}\label{eq:BJR_approach}
& \sum^J_{j=1} \| h^{-1}_{j} r_j\| \cdot \| h^{-1}_{j} ( I_{V_{j}} e -I_{V_{j-1}} e ) \| + \| \nabla r_0\| \cdot \| \nabla I_{V_{0}} e \|\\
& \qquad \leq
 C_{I,\mathrm{2lvl}} \left( \sum^{J}_{j=1} \| h^{-1}_{j} r_j\| \right) \| \nabla e\| + \| \nabla r_0 \| \cdot C_{I_{V_0},4} \cdot \| \nabla e\|.
\end{split}
\end{align}
Combining \eqref{eq:BJR_decomposition_norms}--\eqref{eq:BJR_approach}  yields
\begin{est} 
\begin{equation}
 \label{est:BJR_inspired_classic}
    \| \nabla e \|  \leq C_{\cls} \eta_{J}  + C_{I,\mathrm{2lvl}}  \sum^J_{j=1} \| h^{-1}_{j} r_{j} \| +  C_{I_{V_0},4}  \| \nabla r_0 \|.
\end{equation}
\end{est}
In \cite{Becker1995} the authors assume that the approximation $v_J$ is computed by a multigrid scheme without post-smoothing and with the exact solution of the problem on the coarsest level. This yields the Galerkin orthogonality on the coarsest level, i.e.,
\begin{equation}
 \left\langle r,w_0 \right\rangle = 0,\quad \forall w_0 \in V_0.
\end{equation}
As a consequence, their estimate on the energy norm of the error (see \cite[Theorem~1]{Becker1995}) does not contain the term corresponding to the coarsest level. Another difference between \eqref{est:BJR_inspired_classic} and the estimate in \cite[Theorem~1]{Becker1995} is due to the difference in the definitions of the scaled/discrete residuals described in \Cref{sec:residuals}.

\medskip
Instead of using the bound on the difference of the quasi-interpolants on two consecutive levels (Appendix~\ref{sec:quasi-interpolation-operator}, \Cref{lemma:difference-quasi-interpolation}), and the stability of the quasi-interpolation operator on the coarsest level (Appendix~\ref{sec:quasi-interpolation-operator}, \Cref{thm:quasi-interpolation-to_V-global-estimates}, inequality \eqref{eq:quasi-interpolation-to_V-global_grad}), we can use the stability of the decomposition of the space $H^1_0(\Omega)$ via the quasi-interpolation operators~$I_{V_j}$ (Appendix~\ref{sec:quasi-interpolation-operator}, \Cref{thm:decomposition-using-quasi-interpol-to-V}). In particular,
\begin{multline*}
\sum^{J}_{j=1} \| h^{-1}_{j} r_j\| \cdot \| h^{-1}_{j} ( I_{V_{j}} e - I_{V_{j-1}} e ) \| + \| \nabla r_0\| \cdot \| \nabla I_{V_{0}} e \|\\ 
\leq \left(  \sum^{J}_{j=1} \| h^{-1}_{j} r_j\|^2 + \| \nabla r_0\|^2 \right)^{\frac{1}{2}} \left(  \sum^{J}_{j=1} \| h^{-1}_{j} ( I_{V_{j}} e - I_{V_{j-1}} e ) \|^2  + \| \nabla I_{V_{0}} e \|^2  \right)^{\frac{1}{2}} \\
\leq \left(  \sum^{J}_{j=1} \| h^{-1}_{j} r_j\|^2 + \| \nabla r_0\|^2 \right)^{\frac{1}{2}} {C_{S,I_V}}^{\frac{1}{2}} \| \nabla e \|. 
\end{multline*}
Combining this inequality with \eqref{eq:BJR_decomposition_norms}--\eqref{eq:BJR_decomposition_scaled_res} and using $\sqrt{a} + \sqrt{b} \leq \sqrt{2}\sqrt{a+b}$ leads~to

\begin{est}
\begin{equation}
\label{est:BJR_improved}
\| \nabla e \| \leq  \sqrt{2}\left(  C^2_{\cls}  \eta^2_{J} +      C_{S,I_V} \left (\sum^J_{j=1} \| h^{-1}_{j} r_{j} \|^2 +   \| \nabla r_0\|^2 \right)\right)^{\frac{1}{2}}.
\end{equation}
\end{est}
As we will see in Section \ref{sec:effectivity}, this estimate is efficient with an efficiency constant independent of the number of levels in the hierarchy, i.e., independent of~$J$.

Observing that 
\begin{align*}
\| \nabla (u_J-v_J) \|^2 &= \int_{\Omega} f (u_J-v_J) - \int_{\Omega} \nabla v_J \cdot \nabla (u_J-v_J) = \langle r,  u_J-v_J \rangle\\
&= \sum^{J}_{j=1} \langle r, I_{V_{j}} (u_J-v_J) - I_{V_{j-1}} (u_J-v_J)  \rangle + \langle r, I_{V_{0}} (u_J-v_J) \rangle,
\end{align*}
analogous steps can be applied to show that the following ``algebraic parts''  of the presented estimates provide upper bounds on the algebraic error,
\begin{estalg}
\begin{equation}\label{eq:BJR_alg}
\| \nabla (u_J-v_J)\| \leq C_{I,\mathrm{2lvl}}  \sum^J_{j=1} \| h^{-1}_{j} r_{j} \| +  C_{I_0,3}  \|\nabla r_0 \|,
\end{equation}
\end{estalg}
\begin{estalg}
\begin{equation}\label{eq:BJR_improved_alg}
\| \nabla (u_J-v_J)\| \leq    {C_{S,I_V}}^{\frac{1}{2}} \left(   \sum^J_{j=1} \| h^{-1}_{j} r_{j} \|^2 +  \| \nabla r_0\|^2\right)^{\frac{1}{2}}.
\end{equation}
\end{estalg}


\subsection{Estimates of R\"{u}de \& Huber}
\label{sec:estimRH}

The following derivation is motivated by \cite[Section~2.6]{Ruede1993} and \cite[Sections~4.1--4.3]{Huber2019}. 
Considering the residual equation \eqref{eq:residual_equations} and 
decomposing the error using the quasi-interpolation operator on the finest level $I_{V_{J}}$  yields
\begin{equation}\label{eq:RH_decomposition}
\| \nabla e \|^2 = \langle r ,e- I_{V_{J}} e \rangle + \langle r, I_{V_{J}} e\rangle.
\end{equation}
The first term can be bounded as in \eqref{eq:finest_level_residual_bound}. Rewriting the second term using the exact solution of the discrete problem $u_J$  gives
\begin{align*}
\langle r, I_{V_{J}} e \rangle &= \int_{\Omega} \nabla (u-v_J) \nabla I_{V_{J}} e \\
&= \int_{\Omega} \nabla (u-u_J) \nabla I_{V_{J}} e + \int_{\Omega} \nabla (u_J-v_J) \nabla I_{V_{J}} e.
\end{align*}
The Galerkin orthogonality on the finest level yields that $\int_{\Omega} \nabla (u-u_J) \nabla I_{V_{J}} e$ vanishes and thus
\begin{equation}\label{eq:RH_res_bound_CS}
\langle r, I_{V_{J}} e \rangle =\int_{\Omega} \nabla (u_J-v_J) \nabla I_{V_{J}} e \leq \|\nabla (u_J-v_J) \| \, \|\nabla I_{V_{J}} e \|.
\end{equation} 
After bounding the term $\| \nabla I_{V_{J}} e \|$  using the stability property of the quasi-interpolation operator (Appendix~\ref{sec:quasi-interpolation-operator}, \Cref{thm:quasi-interpolation-to_V-global-estimates}, inequality \eqref{eq:quasi-interpolation-to_V-global_grad}) as
\begin{equation}\label{eq:RH_quasi_stab}
\| \nabla I_{V_{J}} e \| \leq C_{I_{V_{J}},4} \| \nabla e \|,
\end{equation} 
it remains to bound the energy norm of the algebraic error $\|\nabla (u_J-v_J) \|$. This can be done using stable splitting of piecewise linear function space, see 
Appendix~\ref{sec:splitting}, \Cref{thm:stable_splitting_of_V_J} or \cite[Theorem~2.6.2]{Ruede1993}.
Consider an arbitrary decomposition of the algebraic error $u_J-v_J$ into the subspaces  $V_j$, i.e.,
\begin{equation}\label{eq:rude_huber_alg_decomposition}
 u_J-v_J = \sum^{J}_{j=0} e_j, \quad e_j\in V_j, \quad j=0,1,\ldots,J. 
\end{equation}
Then
\begin{align*}
\|\nabla(u_J-v_J) \|^2 &= \langle r,u_J-v_J \rangle  = \sum^{J}_{j=0} \langle r ,e_j \rangle \\
&\leq  \|\nabla  r_0 \| \cdot  \|\nabla e_0\| + \sum^{J}_{j=1} \| h^{-1}_{j} r_j \| \cdot  \|h^{-1}_{j} e_j\|  \\
& \leq \left( \| \nabla r_0 \|^2 +  \sum^{J}_{j=1} \| h^{-1}_{j} r_j \|^2    \right)^{\frac{1}{2}}
\cdot
\left(\|\nabla e_0\|^2 + \sum^{J}_{j=1}   \| h^{-1}_{j} e_j\|^2 
 \right)^{\frac{1}{2}}.
\end{align*}
Taking the infimum over all possible decompositions  \eqref{eq:rude_huber_alg_decomposition} and using Appendix \ref{sec:splitting}, \Cref{thm:stable_splitting_of_V_J} yields
\begin{estalg}
\begin{equation}
\label{estalg:Ruede93}
\|\nabla (u_J-v_J) \| \leq C_{S}^{\frac{1}{2}} \left(  \sum^{J}_{j=1} \| h^{-1}_{j} r_j \|^2  + \|\nabla r_0 \|^2 \right)^{\frac{1}{2}}.
\end{equation}
\end{estalg}

Combining \eqref{eq:RH_decomposition}--\eqref{eq:RH_quasi_stab}, the estimate \eqref{estalg:Ruede93} on the algebraic error, and using the inequality $\sqrt{a} + \sqrt{b} \leq \sqrt{2}\sqrt{a+b}$, we have 
\begin{est}
\begin{equation}
\label{est:RudeHuber}
\|\nabla  e\| \leq \sqrt{2} \left( C^2_{\cls} \eta^2_{J}  + C^2_{I_{V_J},4} C_{S}  \left( \sum^{J}_{j=0}  \| h^{-1}_j r_j \|^2 + \| \nabla r_0 \|^2 \right)\right)^{\frac{1}{2}}.
\end{equation}
\end{est}

\subsection{Estimates of Harbrecht \& Schneider}
\label{sec:estimHS}

In this section we present a derivation motivated by \cite{Harbrecht2016},
which is based on the fact that the basis functions provide a frame in $\left( H^1_0(\Omega) \right)^{\#}$; see Appendix~\ref{sec:splitting}, \Cref{thm:frame_infinite}. Recall that $\left(H^1_0(\Omega) \right)^{\#}$ is the dual space to $H^1_0(\Omega)$. Using the upper bound for the residual yields
\begin{equation}\label{eq:HS_frame}
\| \nabla e \| = \| r\|_{\left( H^1_0(\Omega) \right)^{\#}} \leq   C_{S}^{\frac{1}{2}} \overline{C}_{B}^{\frac{1}{2}}\left(
\| \nabla  r_0 \|^2  +  \sum^{+ \infty}_{j=1} \sum^{\#\K_j}_{i=1}  \frac{\langle r,\phi^{(j)}_i  \rangle^2}{\| \nabla \phi^{(j)}_i\|^2} \right)^{\frac{1}{2}}.
\end{equation}
Following the derivation in \cite[Proof of Theorem~5.1]{Harbrecht2016}, it can be shown that the sum of the terms corresponding to levels $j>J$, i.e.,
\[ \sum^{+ \infty}_{j=J+1}  \sum^{\#\K_j}_{i=1}  \frac{\langle r,\phi^{(j)}_i  \rangle^2}{\| \nabla \phi^{(j)}_i\|^2},
\]
can be bounded by the classic residual based estimator on the $J$th level up to a constant $C_{\HS}>0$ depending only on $d$ and $\gamma_0$, i.e.,
\begin{equation}\label{eq:HS_sum_bound}
\sum^{+ \infty}_{j=J+1} \sum^{\#\K_j}_{i=1}  \frac{\langle r,\phi^{(j)}_i  \rangle^2}{\| \nabla \phi^{(j)}_i\|^2} \leq C_{\HS} \eta^2_J.
\end{equation}

\noindent
Combining  \eqref{eq:HS_frame} and \eqref{eq:HS_sum_bound} yields
\begin{est}
\begin{equation}
\label{est:HS}
\|\nabla  e\| \leq  C_{S}^{\frac{1}{2}} \overline{C}_{B}^{\frac{1}{2}} \left( C_{\HS} \eta^2_{J}  +   \sum^{J}_{j=1} \sum^{\#\K_j}_{i=1}  \frac{\langle r,\phi^{(j)}_i  \rangle^2}{\| \nabla \phi^{(j)}_i\|^2} + \| \nabla r_0 \|^2 \right)^{\frac{1}{2}}.
\end{equation}
\end{est}

Considering the residual~$r$ as a functional on $V_J $, which is possible since $\left( H^1_0(\Omega) \right)^{\#}\subset  V^{\#}_J$, one can show that
\[
\| \nabla (u_J - v_J) \| = \| r \|_{V^{\#}_J}.
\]
From Appendix~\ref{sec:splitting}, \Cref{thm:frames_finite}, it yields that a part of the total error estimator~\eqref{est:HS} is an upper bound on the algebraic error,
\begin{estalg}[]
\begin{equation}
\label{eq:HS_alg}
\| \nabla (u_J - v_J) \|  \leq   C_{S}^{\frac{1}{2}} \overline{C}_{B}^{\frac{1}{2}}  \left( \sum^{J}_{j=1} \sum^{\#\K_j}_{i=1}  \frac{\langle r,\phi^{(j)}_i  \rangle^2}{\| \nabla \phi^{(j)}_i\|^2} + \| \nabla r_0 \|^2 \right)^{\frac{1}{2}}.
\end{equation}
\end{estalg}

\subsection{New estimate derived using stable splitting}
\label{sec:estimss}
The approach from~\cite{Harbrecht2016} can also be modified in the following way. Consider the residual equation \eqref{eq:residual_equations} and an arbitrary decomposition of the error $e=\sum^{+\infty}_{j=0}e_j$, $e_j\in V_j$, $j\in \mathbb{N}_0$. Using the definition of scaled residuals  \eqref{eq:r_j} and \eqref{eq:r_0}, and the Cauchy--Schwarz inequality, we have
\begin{equation}\label{eq:estimss-decompose}
\|\nabla e \|^2 = \langle r,e \rangle  = \sum^{+\infty}_{j=0} \langle r ,{e}_j \rangle 
\leq  \|\nabla  r_0 \| \cdot  \|\nabla {e}_0\| + \sum^{J}_{j=1} \| h^{-1}_{j} r_j \| \cdot  \|h^{-1}_{j} {e}_j\| + 
\sum^{+\infty}_{j=J+1}  \langle r ,{e}_j \rangle.
\end{equation}

\noindent
Consider first the terms $\langle r ,{e}_j \rangle$, for $j>J$.
Using the definition of $r$, Green's theorem on elements $K\in \T_J$, and the definition of jump leads to
\begin{align*}
\langle r,  e_j  \rangle &= \int_{\Omega} f  e_j  - \int_{\Omega} \nabla v_J \cdot \nabla e_j  \\
&= \sum_{K \in \T_J} \left (\int_K  (f + \Delta v_J)  e_j  -  \frac{1}{2} \sum_{E \in \E_{K,\inter}} \int_E \left[   \nabla v_J \right]   e_j \right) .
\end{align*}
Since $v_J$ is a piecewise affine function, the term $\Delta v_J$ vanishes.
Adding and subtracting~$f_J$ yields
\begin{equation*}
\langle r,  e_j  \rangle =  \sum_{K \in \T_J} \left (\int_K  f_J   e_j + \int_K  (f - f_J)  e_j  -  \frac{1}{2} \sum_{E \in \E_{K,\inter}} \int_E \left[   \nabla v_J \right]   e_j \right) .
\end{equation*}
Inserting $h_K h_K^{-1}$ and  $h^{1/2}_K h_K^{-1/2}$, respectively, and using the Cauchy--Schwarz inequality for integrals and subsequently for sums leads to
\begin{align*}
\langle r,  e_j \rangle  &\leq  \left( \sum_{K_j \in \T_j} h^2_{K_j} \|  f_J \|^2_{K_{j}} + \sum_{K_j \in \T_j} h^2_{K_j} \|  f - f_J \|^2_{K_{j}}+  \frac{1}{2}\sum_{K_j \in \T_j} h_{K_j}  \sum_{E \in \E_{K_{j},\inter}}  \|  \left[   \nabla v_J \right] \|^2_E  \right)^{\frac{1}{2}}  
\\
& \qquad \cdot \left( 2 \sum_{K_j \in \T_j} h^{-2}_{K_j} \|  {e}_j \|^2_{K_{j}}  + \sum_{K_{j} \in \T_j}    h^{-1}_{K_j}  \| e_j   \|^2_{\partial K_j}  \right) ^{\frac{1}{2}}.
\end{align*}
Since $ h_j = 2^{J-j} h_J $, we have
\begin{align*}
\sum_{K_j \in \T_j} h^2_{K_j} \| f_J \|^2_{K_{j}}  &= 2^{2(J-j)} \sum_{K_J \in \T_J} h^2_{K_J} \| f_J \|^2_{K_{J}} = 2^{2(J-j)} (\eta^{\RHS}_{J})^2,\\
\sum_{K_j \in \T_j} h^2_{K_j} \| f-f_J \|^2_{K_{j}}  &= 2^{2(J-j)} \sum_{K_J \in \T_J} h^2_{K_J} \| f-f_J \|^2_{K_{J}} = 2^{2(J-j)} (\osc_{J})^2.
\end{align*}
Using that $\nabla v_J$ is constant on elements in $\T_J$ gives
\begin{equation*}
\sum_{K_j \in \T_j}  \sum_{E \in \E_{K_{j},\inter}} \frac{h_{K_j}}{2} \|  \left[   \nabla v \right] \|^2_E = 2^{J-j}\sum_{K_J \in \T_J} \frac{h_{K_J}}{2} \sum_{E \in \E_{K_{J},\inter}}  \|  \left[   \nabla v_J \right] \|^2_E = 2^{J-j}(\eta^{\JUMP}_J)^2.
\end{equation*}
The sum $\sum_{K_{j} \in \T_j}    h^{-1}_{K_j}  \| e_j   \|^2_{\partial K_j}$ can be bounded using Appendix \ref{sec:PDElemmas}, \Cref{lemma:trace+inverse_ineq} as 
\begin{equation*}
\sum_{K_{j} \in \T_j}    h^{-1}_{K_j}  \| e_j   \|^2_{\partial K_j}   \leq C_{\TI}\sum_{K_j \in \T_j} h^{-2}_{K_j} \|  {e}_j \|^2_{K_{j}}.
\end{equation*}
Thus we get
\begin{equation}\label{eq:new_res_action_bound}
    \langle r,  e_j \rangle \leq  \underbrace{\left( 2 + C_{\TI}\right)^{\frac{1}{2}} \left( 2^{2(J-j)} \left( \left ( \eta^{\RHS}_J \right)^2 + (\osc_J)^2\right) 
+  2^{J-j} \left ( \eta^{\JUMP}_J \right)^2 \right)^{\frac{1}{2}}}_{\theta_j} \| h^{-1}_j e_j\|.
\end{equation}

Combining \eqref{eq:estimss-decompose} and \eqref{eq:new_res_action_bound} yields
\begin{align*}
\|\nabla e \|^2 
&\leq  \|\nabla  r_0 \| \cdot  \|\nabla e_0\| + \sum^{J}_{j=1} \| h^{-1}_{j} r_j \| \cdot  \|h^{-1}_{j} e_j\| + 
\sum^{+\infty}_{j=J+1} \theta_{j}  \| h^{-1}_{j} e_j\| \\
& \leq \left( \| \nabla r_0 \|^2 +  \sum^{J}_{j=1} \| h^{-1}_{j} r_j \|^2  + 
\sum^{+\infty}_{j=J+1} \theta^2_{j}\right)^{\frac{1}{2}}  
\\
& \qquad \qquad
\cdot
\left(\|\nabla e_0\|^2 + \sum^{J}_{j=1}   \| h^{-1}_{j} e_j\|^2 + 
 \sum^{+\infty}_{j=J+1}   \|h^{-1}_{j} e_j\|^2 \right)^{\frac{1}{2}}.
\end{align*}
Since the decomposition of $e=\sum^{+\infty}_{j=0}e_j$, $e_j\in V_j$, $j\in \mathbb{N}_0$ is arbitrary, the stability of the splitting (Appendix~\ref{sec:splitting}, \Cref{thm:stable_splitting_to_V}) gives
\begin{equation*}
\|\nabla e \| \leq \left( \| \nabla r_0 \|^2 +  \sum^{J}_{j=1} \| h^{-1}_{j} r_j \|^2  + 
\sum^{+\infty}_{j=J+1} \theta^2_{j} \right)^{\frac{1}{2}}.
\end{equation*}
Using 
\begin{equation*}
\sum^{+\infty}_{j=J+1} 2^{2(J-j)} = \frac{1}{3}\quad \text{and} \quad \sum^{+\infty}_{j=J+1} 2^{J-j} = 1,
\end{equation*}
the infinite sum can be bounded as 
\begin{equation*}
\sum^{+\infty}_{j=J+1} \theta^2_{j} \leq C_{\theta}\eta^2_J,
\end{equation*}
where  $C_{\theta}>0$ is a constant depending only on the dimension $d$ and the shape-regularity parameter $\gamma_0$. 
This results in the following estimate.
\begin{est}
\begin{equation}
\label{est:new_stabsplit}
\|\nabla e \| \leq C_{S}^{\frac{1}{2}} \left( C_{\theta} \eta^2_J   +  \sum^{J}_{j=1} \| h^{-1}_{j} r_j \|^2  + \|\nabla r_0 \|^2 \right)^{\frac{1}{2}}.
\end{equation}
\end{est}
The fact that 
$
    C_{S}^{\frac{1}{2}} \left(  \sum^{J}_{j=1} \| h^{-1}_{j} r_j \|^2  + \|\nabla r_0 \|^2 \right)^{\frac{1}{2}}
$
provides an upper bound on the algebraic error has already been shown in \Cref{sec:estimRH}; see \eqref{estalg:Ruede93}.


\section{Efficiency of the estimates}
\label{sec:effectivity}

Efficiency of the estimates is described by the constant~$C_{\mathrm{eff}}$, such that
\[
    \mathrm{estimate} \leq C_{\mathrm{eff}} \cdot \| \mathrm{error} \| .
\]
Here we in particular focus on whether~$C_{\mathrm{eff}}$ depends on the number of~levels~$J$, quasi-uniformity of the coarsest mesh, and/or on the ratio $h_{\Omega} / \min_{K \in \T_0} h_K$, which is related to the size of the coarsest-level problem.

\subsection{Efficiency of the estimates on the algebraic error}
\label{sec:effectivity_alg}

We will first discuss the estimates in the form
\begin{equation}
\label{eq:form_ver2}
    \| \nabla (u_J-v_J) \| \leq C  \left(   \sum_{j=1}^J \|h^{-1}_{j} r_j \|^2 + \| \nabla r_0 \|^2 \right)^{1/2},
\end{equation}
where either $C = {C_{S,I_V}}^{\frac{1}{2}}$, or $C = {C_{S}}^{\frac{1}{2}}$ is a constant depending only on the dimension $d$ and the shape-regularity parameter~$\gamma_0$; see \eqref{eq:BJR_improved_alg} and \eqref{estalg:Ruede93}.
Using the definition of scaled residuals \eqref{eq:r_j}--\eqref{eq:r_0}, the Cauchy--Schwarz inequality and the lower bound from Appendix~\ref{sec:splitting}, \Cref{thm:stable_splitting_of_V_J} we have (see also the proof of Theorem~2.6.2 in \cite{Ruede1993})
\begin{multline*}
\sum^J_{j=1} \|h^{-1}_{j} r_j \|^2 + \|\nabla r_0 \|^2   = \sum^J_{j=0} \langle r,r_j \rangle =  \\
= \int_{\Omega} \nabla (u_J-v_J) \cdot \nabla \left( \sum^J_{j=0} r_j   \right)
 \leq \| \nabla (u_J-v_J) \| \cdot \left \lVert \nabla \left( \sum^J_{j=0} r_j   \right) \right \rVert \leq \\
 \leq \| \nabla (u_J-v_J) \| \cdot {{c_{S}}^{-\frac{1}{2}}} \left( \sum^J_{j=1} \|h^{-2}_{j} r_j \|^2 +   \|  \nabla r_0\|^2 \right)^{1/2}.
\end{multline*}
Consequently, 
\begin{align}
\label{eq:form_ver2_eff}
\left( \sum^J_{j=1} \|h^{-1}_{j} r_j \|^2 + \|\nabla r_0 \|^2   \right)^{1/2}
& \leq {{c_{S}}^{-\frac{1}{2}}} \| \nabla (u_J-v_J) \|,
\end{align}
i.e., the efficiency constant depends only on the dimension $d$ and the shape-regularity parameter $\gamma_0$.

The efficiency of the estimate~\eqref{eq:HS_alg}
\begin{equation*}
    \| \nabla (u_J-v_J) \| \leq C_{S}^{\frac{1}{2}} \overline{C}_{B}^{\frac{1}{2}}  \left(  \sum^J_{j=1} \sum^{\#\K_j}_{i=1} \frac{\langle r,\phi^{(j)}_i  \rangle^2}{\| \nabla \phi^{(j)}_i\|^2}  + \| \nabla r_0 \|^2 \right)^{1/2},
\end{equation*} 
can be shown by using $\|\nabla (u_J-v_J)  \| = \| r \|_{\left( V_J \right)^{\#}}$ and the lower bound from Appendix~\ref{sec:splitting}, \Cref{thm:frames_finite}, giving 
\begin{equation*}
    \left(  \sum^J_{j=1} \sum^{\#\K_j}_{i=1} \frac{\langle r,\phi^{(j)}_i  \rangle^2}{\| \nabla \phi^{(j)}_i\|^2}  + \| \nabla r_0 \|^2 \right)^{1/2} \leq c_{S}^{-\frac{1}{2}} \overline{c}_{B}^{-\frac{1}{2}} \| \nabla (u_J-v_J) \|.
\end{equation*} 
The efficiency constant depends only on the dimension~$d$ and the shape-regularity parameter $\gamma_0$.

Finally, for the estimate \eqref{eq:BJR_alg}
\[
    \| \nabla (u_J-v_J)\| \leq C_{I,\mathrm{2lvl}}  \sum^J_{j=1} \| h^{-1}_{j} r_{j} \| +  C_{I_0,3}  \|\nabla r_0 \|,
\]
the equivalence of the Euclidean and $\ell^1$-norm,
\[
    \sum^J_{j=1} \| h^{-1}_{j} r_{j} \| +  \|\nabla r_0 \| \leq \sqrt{J} \left(   \sum^J_{j=1} \| h^{-1}_{j} r_{j} \|^2 +   \|\nabla r_0 \|^2 \right)^{1/2},
\]
and \eqref{eq:form_ver2_eff} yield
\[
    C_{I,\mathrm{2lvl}}  \sum^J_{j=1} \| h^{-1}_{j} r_{j} \| +  C_{I_0,3}  \|\nabla r_0 \| \leq \sqrt{J} \max \{C_{I,\mathrm{2lvl}}, C_{I_0,3}\} \, c^{-\frac12}_S\| \nabla (u_J-v_J)\|.
\]
This shows the efficiency of~\eqref{eq:BJR_alg} with $C_{\mathrm{eff}}=\sqrt{J}\widetilde{C}_{\mathrm{eff}}$, where $\widetilde{C}_{\mathrm{eff}}$ depends only on~$d$ and~$\gamma_0$. 
This result does not necessarily imply a dependence of~\eqref{eq:BJR_alg} on the number of levels~$J$. However, we present below in \Cref{sec:numexp_J} a numerical experiment indicating such behaviour.

\subsection{Efficiency of estimates on total error}
\label{sec:effectivity_total}

The efficiency of the total error estimates follows from the standard result on the efficiency of the classical (one-level) residual-based error estimator. There exists a positive constant~$\overline{C}_{\mathrm{eff}}$ depending on the shape regularity of~$\T_J$ such that
\begin{equation}
\label{eq:eta_effectivity}
\left( \left( \eta^{\RHS}_J\right)^2 + \left( \eta^{\JUMP}_J \right)^2 \right)^{\frac{1}{2}} \leq \overline{C}_{\mathrm{eff}} \left( \| \nabla e \| + \osc_J\right);
\end{equation}
see, e.g.,~\cite[Section~1.4]{Verfurth2013}.
Since $\| \nabla(u_J-v_J)\| \leq \| \nabla e\|$,
we can use the efficiency of the algebraic error estimates together with \eqref{eq:eta_effectivity} to show the efficiency of the estimates on the total error (up to the oscillation term). The resulting efficiency constants depend on the same quantities as the efficiency constants for the algebraic error estimates.

For example, for the estimate \eqref{est:RudeHuber} associated with the algebraic error estimate \eqref{estalg:Ruede93},
\[
    \sqrt{2} \left( C^2_{\cls} \eta^2_{J}  + C^2_{I_{V_J},4} C_{S}  \left( \sum^{J}_{j=0}  \| h^{-1}_j r_j \|^2 + \| \nabla r_0 \|^2 \right)\right)^{\frac{1}{2}}
    \leq
    C \left( \| \nabla e \| + \osc_J\right),
\]
with $C^2 = 2 (C^2_{\cls} (\overline{C}^2_{\mathrm{eff}} + 1) + C^2_{I_{V_J},4} C_S c^{-1}_S)$.


\section{Computability of the error estimates}
\label{sec:evaluation}

In this section we address several ways in which the scaled residual norms from the estimates presented in \Cref{sec:estimates} can be evaluated or bounded. When the scaled residual norms are replaced by their bounds, proving the efficiency of the estimates from \Cref{sec:estimates} becomes a nontrivial task.

We first state an algebraic formulation of the problem~\eqref{eq:dicretized_problem}. Then we present the algebraic representation of the scaled residual norms and some of their bounds from the literature. As the main contribution of this paper, we present in \Cref{sec:strategy} a new approach for approximating the scaled residual norm on the coarsest level using adaptive number of conjugate gradient iterations. This yields total and algebraic error estimates which are provably efficient and robust with respect to the size of the coarsest-level problem.

\subsection{Algebraic formulation of the problem, residual vectors}

Given a basis $\Phi_J$ of $V_J$, the problem \eqref{eq:dicretized_problem} can be algebraically formulated as finding the vector of coefficients $\vec{u}_{{J}}\in \R^{\#\K_J}$ of the function $u_J$ in the basis $\Phi_J$ such that
\begin{equation}
\label{eq:model_problem_algebra}
    \matrx{A}_{J}\vec{u}_{{J}} = \vec{f}_J,
\end{equation}
where $\matrx{A}_{J}$ is the \emph{stiffness matrix} on the finest level~$J$, 
\[
    [\matrx{A}_{J}]_{mn} = \int_\Omega \nabla \phi^{(J)}_n \cdot \nabla \phi^{(J)}_m,
\]
and 
\[
    [\vec{f}_J]_{m} = \int_\Omega f \phi^{(J)}_m, \qquad m,n = 1, \ldots, \#\K_J.
\]
Recall that $\#\K_J$ is the cardinality of the basis $\Phi_J$. We use the standard assumption that the right-hand side vector $\vec{f}_J$ can be computed exactly using a numerical quadrature. If $\vec{f}_J$ is only known approximately, an additional term must be added to the error bounds presented above; see e.g., the discussion in \cite[Section~6]{Stevenson2007}. 
 
Let $v_J$ be an approximation of the solution $u_J$ of \eqref{eq:dicretized_problem} and $\vec{v}_J$ be the vector of coefficients of $v_J$ in the basis $\Phi_J$. Let $r$ be the residual~\eqref{eq:funcresidual} associated with~$v_J$.
Consider the residual vectors $\vec{r}_j  \in \mathbb{R}^{\# \K_j} $, $j=0,\ldots,J$, 
\begin{equation}\label{eq:def_R_j}
\left[ \vec{r}_{j}\right]_m = \langle r , \phi^{(j)}_m \rangle 
,\quad m = 1, \ldots, {\#\K_j}.
\end{equation}
The vector $\vec{r}_J$ corresponding to the finest level can be computed as 
\begin{equation}
\vec{r}_{{J}} = \vec{f}_{J} - \matrx{A}_{J}\vec{v}_{{J}}.
\end{equation}
The residual vectors corresponding to coarser levels can be computed from $\vec{r}_J$ by restriction. For the prolongation matrices $\matrx{P}^{J}_{j} \in \mathbb{R}^{\#\K_{J} \times \#\K_j}$ associated with the (nested) finite element spaces $V_j$, $V_J$ and the bases~$\Phi_j$, $\Phi_J$,
\begin{equation}
    \vec{r}_{j} = ( \matrx{P}^{J}_{j} )^{\top} \vec{r}_{J};
\end{equation}
see, e.g., \cite[Section~3.2]{Ruede1993}, or \cite[Section~2.4]{Vacek2020} for a more detailed explanation.

\subsection{The terms associated with fine levels}
\label{sec:eval_finelevels}

In this section we present an algebraic form of the term~$\| h^{-1}_j r_j \|^2$, $j=1,\ldots,J$, and several ways of bounding it by computable quantities presented in literature.

Let $\vec{c}_{j}$ be the vector of coefficients of $r_j$ in the basis $\Phi_j$. The definitions \eqref{eq:def_R_j} of $\vec{r}_j$ and \eqref{eq:r_j} of $r_j$ give
\begin{equation}\label{eq:evaluate_coarsest_residual_relation}
\left[ \vec{r}_{j} \right]_m = \langle r , \phi^{(j)}_m \rangle = \int_{\Omega} h^{-2}_j r_j \phi^{(j)}_m = \sum_n \int_{\Omega} h^{-2}_j  \left[ \vec{c}_{j} \right]_n \phi^{(j)}_n\phi^{(j)}_m, \qquad \forall m = 1, \ldots, \#\K_j.
\end{equation}
Let $\mMs_j$ be a \emph{scaled mass matrix} defined as 
\begin{equation*}
    \left[ \mMs_j \right]_{m,n} = \int_{\Omega} h^{-2}_j \phi^{(j)}_n \phi^{(j)}_m, \qquad \forall m,n = 1, \ldots, \#\K_j.
\end{equation*}
The equation \eqref{eq:evaluate_coarsest_residual_relation} can then be expressed as $\vec{r}_{j} = \mMs_j \vec{c}_{j}$ and therefore
\begin{equation}\label{eq:evaluate_term_fine_levels_expression}
\| h^{-1}_j r_j \|^2 = \int_{\Omega} h_j^{-2}  \Phi_j \vec{c}_{j} \cdot  \Phi_j \vec{c}_{j} = \vecT{c}_{j} \mMs_j \vec{c}_{j} = \vecT{r}_j ( \mMs_j )^{-1} \vec{r}_{j}.
\end{equation}
The evaluation of the term \eqref{eq:evaluate_term_fine_levels_expression} thus involves the solution of a system with a possibly large 
matrix~$\mMs_j$. Instead of computing this quantity, one can seek a computable upper bound.

Let $\matrx{D}_j$ be a diagonal matrix $\left [ \matrx{D}_j \right ]_{m,m} = \int_{\Omega}\nabla \phi^{(j)}_m\cdot \nabla \phi^{(j)}_m$, $m=1,\ldots,\#\K_j$.
The stability of basis functions (Appendix \ref{sec:splitting}, \Cref{lemma:stability_of_basis_functions}) and \eqref{eq:stable_basis_matrix_equivalence_inverse} give
\begin{align}
 c_{B} \vecT{r}_{j} \matrx{D}_j^{-1} \vec{r}_{j} \leq 
 \| h^{-1}_j r_j\|^2 &= \vecT{r}_j ( \mMs_j )^{-1} \vec{r}_{j} \leq  C_{B} \vecT{r}_{j} \matrx{D}_j^{-1} \vec{r}_{j}.
 \label{eq:bound_smm_diagstiff}
\end{align}
The upper bound in \eqref{eq:bound_smm_diagstiff} is used in \cite{Ruede1993,Huber2019} to bound the algebraic error as
\begin{equation}
\label{estalg:Ruede93_rjdiag}
\|\nabla (u_J-v_J) \| \leq C_{S}^{\frac{1}{2}} \Big( C_{B} \sum^{J}_{j=1} \vecT{r}_{j} \matrx{D}_j^{-1} \vec{r}_{j}  + \|\nabla r_0 \|^2 \Big)^{\frac{1}{2}} \leq C_{S}^{\frac{1}{2}} \overline{C}_{B}^{\frac{1}{2}} \Big( \sum^{J}_{j=1} \vecT{r}_{j} \matrx{D}_j^{-1} \vec{r}_{j}  + \|\nabla r_0 \|^2 \Big)^{\frac{1}{2}},
\end{equation}
where $\overline{C}_{B} = \max\{1, C_{B}\}$. For $\overline{c}_{B} = \min\{1, c_{B}\}$, using the lower bound in \eqref{eq:bound_smm_diagstiff} and \eqref{eq:form_ver2_eff}
\begin{equation}
\label{estalg:Ruede93_rjdiag_eff}
\Big( \sum^{J}_{j=1} \vecT{r}_{j} \matrx{D}_j^{-1} \vec{r}_{j}  + \|\nabla r_0 \|^2 \Big)^{\frac{1}{2}} \leq
\Big( c^{-1}_B\sum^{J}_{j=1} \| h^{-1}_j r_j\|^2  + \|\nabla r_0 \|^2 \Big)^{\frac{1}{2}} \leq
\overline{c}_{B}^{-\frac{1}{2}} c_{S}^{-\frac{1}{2}} \|\nabla (u_J-v_J) \|,
\end{equation}
which proves the efficiency of the bound~\eqref{estalg:Ruede93_rjdiag}. Recall that $c_{B}$, $C_{B}$, $c_{S}$, and $C_{S}$ only depend on~$d$ and~$\gamma_0$.

Noting that
\begin{equation}
\label{eq:HS_algterm}
    \vecT{r}_{j} \matrx{D}_j^{-1} \vec{r}_{j}=\sum^{\#\K_j}_{i=1}  \frac{\langle r,\phi^{(j)}_i  \rangle^2}{\| \nabla \phi^{(j)}_i\|^2} ,
\end{equation}
we see that the algebraic error bounds~\eqref{eq:HS_alg} and \eqref{estalg:Ruede93_rjdiag} are identical. 

The term \eqref{eq:evaluate_term_fine_levels_expression} can also be bounded using other techniques, e.g., using the so-called mass lumping (suggested in \cite[Section~4]{Becker1995}) or the multigrid smoothing routines (see the discussion in \cite[Section~4.5.2]{Huber2019}). By using these techniques, however, we introduce another unknown constant into the overall estimate and possibly weaken its efficiency.

In order to get a fully computable bound on \eqref{eq:evaluate_term_fine_levels_expression} (i.e.,~a bound without any unknown constant) and to avoid solving an algebraic problem with a large matrix, we can proceed similarly to \cite{Papez2018}. Define $\bar{r}_j\in L^2(\Omega)$ to be (discontinuous) piecewise affine functions on $\T_j$ such that for all $K \in \T_j$,
\begin{equation}
\int_K h^{-2}_j \bar{r}_j \phi^{(j)}_m = \left[ \vec{r}_{{j}} \right]_m \cdot \frac{1}
{\# \left\lbrace \bar{K} \in \T_j; m \mbox{ is vertex of }\bar{K} \right\rbrace}
=: \left[ \vec{r}_{{j},K} \right]_m \quad \forall \phi^{(j)}_m.
\end{equation}
This ensures that
\begin{equation}
\int_{\Omega} h^{-2}_j \bar{r}_j \phi^{(j)}_m =  \left[ \vec{r}_{{j}} \right]_m = \langle r, \phi^{(j)}_m \rangle.
\end{equation}
Since $\bar{r}_j$ is piecewise affine on elements, the norms $\| h^{-1}_j\bar{r}_j \|^2_K$ can be computed using the solutions of systems with local scaled mass matrices, i.e.,  $\| h^{-1}_j\bar{r}_j \|^2_K = \vecT{r}_{{j},K} ( \mMKsj{j} )^{-1} \vec{r}_{{j},K}$, where
\begin{equation*}
\left[ \matrx{M}^{S}_{j,K} \right]_{m,n} = \int_{K} h^{-2}_j  \phi^{(j)}_n\phi^{(j)}_m, \qquad \forall m,n \in \N_K.
\end{equation*}
For the whole term $\| h^{-1}_j {r}_j \|$ we have
\begin{equation}
\| h^{-1}_j {r}_j \|^2 \leq \| h^{-1}_j\bar{r}_j \|^2 = \sum_{K\in \T_j} \vecT{r}_{{j},K} ( \mMKsj{j} )^{-1} \vec{r}_{{j},K};
\end{equation}
cf.~\cite[Eq.~(5.9)]{Papez2018}.

\subsection{The term associated with the coarsest level}
\label{sec:evaluation_coarse}

In this section we present the algebraic form of the term $\| \nabla {r}_0\|$ and several ways of bounding it adapted from literature.

Let $\vec{c}_0$ be the vector of coefficients of $r_0$ in the basis $\Phi_0$.
Analogously to \eqref{eq:evaluate_term_fine_levels_expression}, using the definitions \eqref{eq:def_R_j} of $\vec{r}_{0}$ and \eqref{eq:r_0} of $r_0$, we have
\begin{equation*}
\left[ \vec{r}_{0} \right]_m = \langle r , \phi^{(0)}_m \rangle = \int_{\Omega} \nabla r_0 \cdot \nabla \phi^{(0)}_m = \sum_n \int_{\Omega}   \left[ \vec{c}_{0} \right]_n \nabla  \phi^{(0)}_n \cdot \nabla \phi^{(0)}_m, \qquad \forall m = 1, \ldots, \#\K_0.
\end{equation*}
Let $\matrx{A}_{0}$  be the stiffness matrix associated with the coarsest level  
\begin{equation*}
[\matrx{A}_{0}]_{mn} = \int_\Omega \nabla \phi^{(0)}_n \cdot \nabla \phi^{(0)}_m, \qquad m,n = 1, \ldots, \#\K_0.    
\end{equation*}
The vector of coefficients  $\vec{c}_{0}$ then satisfies $\matrx{A}_{0} \vec{c}_{0} = \vec{r}_{0}$.
This leads to 
\begin{equation}
\label{eq:eval_coarsest_resnorm}
\| \nabla {r}_0\|^2 = \vecT{c}_0 \matrx{A}_{0}\vec{c}_0 = \vecT{r}_0 \matrx{A}^{-1}_{0} \vec{r}_{0}. 
\end{equation}
The evaluation of the term $\| \nabla {r}_0\|^2$ thus requires solution of the system with the stiffness matrix associated with the coarsest level.
For problems where the stiffness matrix is large, this can be too costly and in some settings even unfeasible. 

An approximate solution $\widetilde{\vec{c}}_0$ of $\matrx{A}_0 \vec{c}_0 = \vec{r}_0$ computed by the (preconditioned) conjugate gradient method with a fixed number of iterations was used in \cite[Section~4.5.2]{Huber2019}.
The resulting term $\widetilde{\vec{c}}_0^*\vec{r}_0$ might not be, however, an upper bound on $\| \nabla {r}_0\|^2$. Therefore, the resulting value may not led to an upper bound on the algebraic nor the total error.

The term \eqref{eq:eval_coarsest_resnorm} can also be bounded using a quantity involving only the inverse of a diagonal matrix.
Friedrich's inequality (Appendix \ref{sec:PDElemmas}, \Cref{lemma:Friedrichs}) imply that
\begin{equation}\label{eq:fridrichs_coarsest}
\| w_0 \|^2 \leq C_F^{2} h_{\Omega}^{2} \| \nabla w_0 \|^2, \quad \forall w_0 \in V_0.
\end{equation}
Let $\matrx{M}_0$ be the mass matrix associated with the coarsest level, i.e., 
$[\matrx{M}_{0}]_{mn} = \int_\Omega  \phi^{(0)}_n  \phi^{(0)}_m$, $m,n = 1, \ldots, \#\K_0.$
The inequality \eqref{eq:fridrichs_coarsest} can be equivalently expressed algebraically as
\[
\vecT{w} \mM_{0} \vec{w} \leq C_F^{2} h_{\Omega}^{2}  \vecT{w} \matrx{A}_{0} \vec{w}, \quad \forall \vec{w}\in \R^{\# \K_0}.
\]
Since $\matrx{A}_0$ and $\matrx{M}_0$ are symmetric positive definite matrices we have
\begin{equation*}
\vecT{w} \matrx{A}^{-1}_{0} \vec{w} 
\leq
 C_F^{2} h_{\Omega}^{2}  \vecT{w} \mM^{-1}_{0} \vec{w}, \quad \forall \vec{w} \in \mathbb{R}^{\#\K_0}.
\end{equation*}
This bound can be a possibly large overestimation; see the discussion in \cite[Sects.~3.1 and~5.2]{Papez2018}.
Define the diagonal matrix $\matrx{D}_0$ as $\left [ \matrx{D}_0 \right ]_{m,m} = \int_{\Omega}\nabla \phi^{(0)}_m\cdot \nabla \phi^{(0)}_m$, $m=1,\ldots,\#\K_0$.
The term on the right-hand side can be further simplified using 
the spectral equivalence of the mass matrix $\matrx{M}_0$ with $\matrx{D}_0$; see inequality \eqref{eq:massspeceq} in Appendix \ref{sec:splitting}. 
Altogether we have
\begin{equation}
\label{eq:res0upbound_massspeceq}
\| \nabla {r}_0 \|^2 = \vecT{r}_{0} \matrx{A}^{{-1}}_{0} \vec{r}_{0} \leq C_F^{2} h_{\Omega}^{2} \vecT{r}_{0} \mM^{{-1}}_{0} \vec{r}_{0}
\leq C_{M}C_F^{2} \frac{h_{\Omega}^{2}}{\min_{K \in \T_0} h^{2}_K} \vecT{r}_{0} \matrx{D}^{-1}_0 \vec{r}_{0}.
\end{equation}
As for the efficiency, this allows to prove, using the Inverse inequality (Appendix \ref{sec:PDElemmas}, \Cref{lemma:local_inverse_ineq}) and \eqref{eq:massspeceq},
\[
    \vecT{r}_{0}\matrx{D}^{-1}_0 \vec{r}_{0} \leq \frac{C^2_{\INV}}{c_M}   \frac{\max_{K \in \T_0}h^{2}_K}{\min_{K \in \T_0}h^{2}_K}  \| \nabla {r}_0 \|^2,
\]
which indicates that bound \eqref{eq:res0upbound_massspeceq} may not be robust with respect to ${h_{\Omega}^{2}}/{\min_{K \in \T_0} h^{2}_K}$. Numerical experiments in \Cref{sec:numexp} illustrate this deficiency.

\subsection{Adaptive approximation of the coarsest-level term}
\label{sec:strategy}

In order to overcome the deficiencies described above, we now present a new approach for approximating the term \eqref{eq:eval_coarsest_resnorm}. 
It consists of applying the preconditioned conjugate gradient method (PCG) to \mbox{$\matrx{A}_{0}\vec{c}_{0} = \vec{r}_{0}$} and using lower and upper bounds on the error in PCG. 
A number of PCG iterations is determined adaptively in order to ensure the efficiency of the resulting bounds on total and algebraic errors.

Let $\vec{c}^{(i)}_{0}$ be the approximation of~$\vec{c}_{0}=\matrx{A}^{{-1}}_{0} \vec{r}_{0}$ computed at the $i$-th iteration of PCG a with zero initial guess. Let $\| \cdot\|_{\matrx{A}_0}$ be the norm generated by the matrix $\matrx{A}_0$, i.e.,  $\Anorm{\vec{v}} = \vecT{v} \matrx{A}_{0} \vec{v}$, for all $\vec{v} \in \R^{\# \K_0}$.
The term \eqref{eq:eval_coarsest_resnorm} can be expressed using the  following decomposition
\begin{equation}
\label{eq:CG_decomposition}
    \vecT{c}_{0} \matrx{A}_{0} \vec{c}_{0}  =  \underbrace{\sum^{i-1}_{m=0} \Anorm{\vec{c}^{(m+1)}_{0} - \vec{c}^{(m)}_{0}}}_{\normalsize 
    {=: \mu^2_i}} + \Anorm{\vec{c}_{0} - \vec{c}^{(i)}_{0}},
\end{equation}
which is a consequence of the \emph{local orthogonality} in PCG. This formula was already shown for CG in the seminal paper \cite[Theorem~6:1, Eq.~(6:2)]{Hestenes1952}. The terms \mbox{$\Anorm{\vec{c}^{(m)}_{0} - \vec{c}^{(m+1)}_{0}}$} can be computed at a minimal cost from the scalars available during the computations.

It is crucial to note that the local orthogonality is in CG computations preserved proportionally to the machine precision. Therefore, \eqref{eq:CG_decomposition} is valid, up to a negligible error, also in finite-precision computations; see the derivation and proofs in \cite{Strakos2002} (resp.~in \cite{Strakos2005} for the preconditioned variant).

Let $\zeta_i^2$ be an upper bound on the squared $\matrx{A}_0$-norm of the error in the PCG computation, i.e., on $\Anorm{\vec{c}_{0} - \vec{c}^{(i)}_{0}}$; see, e.g., \cite{Golub1994}, \cite{Meurant2023} and the references therein\footnote{Strictly speaking, numerical stability of the upper bounds to the A-norm of the error in CG computations has not been rigorously proved. Well-justified heuristics supported by numerical experiments however suggest their validity also in finite-precision computations; see \cite{Golub1994}, \cite{Meurant2023}.}. 
The derivation of such a bound is based on the interpretation of CG as a procedure for computing the Gauss quadrature approximation to a Riemann--Stieltjes integral and typically requires a lower bound on the smallest eigenvalue of~$\matrx{A}_0$. 
A simple lower bound can be derived using \cite[Theorem~3]{Fri72}, as
\[
    \lambda_{\min}(\matrx{A}_0) \geq C_F^{-2} h^{-2}_\Omega \min_{K \in \T_0} \lambda_{\min} (\matrx{M}_{0,K}),
\]
where $C_F$ is the constant from Friedrich's inequality (Appendix \ref{sec:PDElemmas}, \Cref{lemma:Friedrichs}) and $\matrx{M}_{0,K}$ is the local mass matrix corresponding to $K \in \T_0$. If an upper bound~$\zeta_i$ is not available, the $\matrx{A}_0$-norm of the error $\Anorm{\vec{c}_{0} - \vec{c}^{(i)}_{0}}$ can be bounded using the ideas presented in \Cref{sec:evaluation_coarse}; see also \cite[Section~3.2]{Papez2018}.

The approach then consists of running PCG for the coarsest problem until
\begin{equation}
\label{eq:strategy_stop}    
    \zeta_i^2 \leq \theta \left(\sum^J_{j=1} \vecT{r}_{j} \matrx{D}_j^{-1} \vec{r}_{j} + \mu^2_i \right),
\end{equation}
where $\theta > 0$ is a chosen parameter. Then we consider the bound 
\begin{equation}
\label{eq:procedure_bound}
     \vecT{r}_{0} \matrx{A}^{{-1}}_{0} \vec{r}_{0} \leq \mu^2_i + \zeta_i^2,
\end{equation}
which can be combined, e.g., with \eqref{estalg:Ruede93_rjdiag} to get an upper bound on the algebraic error
\begin{equation}
\label{eq:new_alg}
\| \nabla (u_J - v_J) \|  \leq   C_{S}^{\frac{1}{2}} \overline{C}_{B}^{\frac{1}{2}}  \left( \sum^{J}_{j=1} \vecT{r}_{j} \matrx{D}_j^{-1} \vec{r}_{j} + \mu^2_i + \zeta_i^2  \right)^{\frac{1}{2}}.
\end{equation}

The criterion~\eqref{eq:strategy_stop} guarantees that
\begin{equation}
\label{eq:strategy_bound_foreff} 
   \vecT{r}_{0} \matrx{A}^{{-1}}_{0} \vec{r}_{0} \leq \mu^2_i + \zeta_i^2 \leq \theta \sum^J_{j=1} \vecT{r}_{j} \matrx{D}_j^{-1} \vec{r}_{j} + (1 + \theta) \mu^2_i,
\end{equation}
which allows us to prove the efficiency of~\eqref{eq:new_alg}.
Indeed, using \eqref{eq:strategy_bound_foreff}, $\mu^2_i \leq \| \nabla {r}_0 \|^2$ (see~\eqref{eq:CG_decomposition}), and \eqref{estalg:Ruede93_rjdiag_eff}
\begin{align*}
    \left( \sum_{j=1}^J \vecT{r}_{j} \matrx{D}_j^{-1} \vec{r}_{j} + \mu^2_i + \zeta_i^2 \right)^\frac12 
    & \leq (1+\theta)^{\frac12} \left( \sum_{j=1}^J \vecT{r}_{j} \matrx{D}_j^{-1} \vec{r}_{j} + \| \nabla {r}_0 \|^2 \right)^\frac12
    \\ & \leq (1+\theta)^{\frac12} \, c_S^{-\frac12} c_B^{-\frac12} \|\nabla (u_J-v_J) \| \,.
\end{align*}

The proposed strategy follows the ideas of \cite[Section~3.2]{Papez2018}. 
In principle, the possible overestimation in $ \Anorm{\vec{c}_{0} - \vec{c}^{(i)}_{0}} \leq \zeta_i^2$ is controlled by~\eqref{eq:strategy_stop} and it is compensated for within the procedure by performing extra iterations. This allows us to prove the efficiency even if the estimate~$\zeta_i$ is not very tight. However, in such case the number of extra iterations might be quite large; see \cite[Section 7.1]{Papez2018}.

Finally, we note that $\vecT{r}_{j} \matrx{D}_j^{-1} \vec{r}_{j}$ in \eqref{eq:strategy_stop} can be replaced by any (efficient) bound on $\| h_j^{-1}r_j \|^2$. Then the algebraic error bound \eqref{eq:new_alg} should be changed accordingly, replacing $\vecT{r}_{j} \matrx{D}_j^{-1} \vec{r}_{j}$ and $\overline{C}_{B}$.

\section{Numerical experiments}
\label{sec:numexp}

The experiments focus on the efficiency of the error estimates on the algebraic error. In particular, we consider the estimate
\begin{equation}
    \label{eq:numexp_prototype}
    C\Big( \sum^{J}_{j=1} \vecT{r}_{j} \matrx{D}_j^{-1} \vec{r}_{j}  + \vecT{r}_0 \matrx{A}^{-1}_{0} \vec{r}_{0} \Big)^{\frac{1}{2}}
\end{equation}
and variants where $\vecT{r}_0 \matrx{A}^{-1}_{0} \vec{r}_{0} = \|\nabla r_0 \|^2$ is replaced by computable approximations. This prototype covers most of the algebraic error estimates from \Cref{sec:estimates}, where the scaled residual norms $\| h_j^{-1} r_j \|^2$ on the fine levels are efficiently approximated by $\vecT{r}_{j} \matrx{D}_j^{-1} \vec{r}_{j}$ using \eqref{eq:bound_smm_diagstiff}.
As shown in the previous sections, approximating the coarsest-level term $\|\nabla r_0 \|^2$ while preserving the efficiency is more subtle.

For the experiments, we consider a 3D Poisson problem on a unit cube, $\Omega = (0,1)^3$, with the exact solution
\begin{equation*}
    u(x,y,z) = x(x-1)y(y-1)z(z-1)e^{-100((x-\frac{1}{2})^2+(y-\frac{1}{2})^2+(z-\frac{1}{2})^2)}.
\end{equation*}
The problem is discretized by the standard Galerkin finite element method with piecewise affine polynomials on a sequence of six uniformly refined meshes with the same shape regularity \eqref{eq:shaperegularity}. The associated matrices are generated in the FE software FEniCS \cite{Alnaes2015,Logg2012}, and the computations are done in MATLAB 2023a.
The codes for the experiments are available from \url{https://github.com/vacek-petr/inMLEstimate}.

Given the mesh~$\T_J$ (the finest mesh varies in the experiments), the associated Galerkin solution~$u_J$ of~\eqref{eq:dicretized_problem} is for the purpose of the evaluation of the efficiency of the estimates considered (with a negligible inaccuracy) as a result of using the MATLAB backslash, or, for very large problems, using the multigrid V-cycle with an excessive number (30) of V-cycle repetitions. 
The approximation~$v_J$ to~$u_J$ is given by a multigrid solver starting with a zero approximation and repeating V-cycles until the relative energy norm of the (algebraic) error $u_J - v_J$ drops below $10^{-11}$. Each multigrid V-cycle uses 3 pre and 3 post Gauss--Seidel smoothing iterations. The problem on the coarsest level is solved using CG where the stopping criterion is based on the relative residual with the tolerance~$10^{-1}$.
In order to monitor the efficiency for varying algebraic error, we will also plot below intermediate results after completing each multigrid V-cycle.

We observed very similar results also for a set of two-dimensional problems and a 3D problem with a more complicated geometry. These experiments can be found in the repository \url{https://github.com/vacek-petr/inMLEstimate} where also the data and codes are available.

\subsection{Robustness with respect to the number of levels}
\label{sec:numexp_J}

The first experiment studies the efficiency of the estimates while varying the number of levels $J=1,2,\ldots,5$ in the hierarchy. We fix the size of the problem on the coarsest-level and, consequently, the size of the finest problem grows; see \Cref{tab:matrix_sizes_levels}.

\begin{table}[!htb]
\centering
\begin{tabular}{|r|r|}
\hline
\multicolumn{1}{|l|}{coarsest-level DoFs} & \multicolumn{1}{l|}{finest-level DoFs} \\ \hline
125     & 1 331     \\ \hline
125     & 12 167    \\ \hline
125     & 103 823   \\ \hline
125     & 857 375   \\ \hline
125     & 6 967 871 \\ \hline
\end{tabular}
\caption{Size of the problems for the experiment in \Cref{sec:numexp_J}.}
\label{tab:matrix_sizes_levels}
\end{table}

For the prototype estimate \eqref{eq:numexp_prototype}, the efficiency index
\begin{align}
\label{eq:numexp2_effb}
    I_1 &= \frac{{C}_{\mathrm{numexp}} \left(   \sum^J_{j=1} \vecT{r}_{j} \matrx{D}_j^{-1} \vec{r}_{j}  + \vecT{r}_0 \matrx{A}^{-1}_{0} \vec{r}_{0}\right)^{\frac{1}{2}}} {\|\nabla (u_J-v_J) \|},
\end{align}
is evaluated for every $J$, $v_J$, and also for intermediate results after each V-cycle.
The factor $C_{\mathrm{numexp}}$ accounts for $C_{S}^{\frac{1}{2}} \overline{C}_{B}^{\frac{1}{2}}$; see~\eqref{estalg:Ruede93_rjdiag}. For the purpose of the experiment, it is chosen as the minimal value such that the efficiency indices $I_1$ are for all $J$ and in all V-cycle repetitions above or equal to one; \mbox{$C_{\mathrm{numexp}}=1.28$}.
In order to examine the difference, we also evaluate the index
\begin{align}
\label{eq:numexp2_effa}
    I_2 &= \frac{{C}_{\mathrm{numexp}} \left( \sum^J_{j=1} \left( \vecT{r}_{j} \matrx{D}_j^{-1} \vec{r}_{j} \right )^{\frac{1}{2}} + \left (  \vecT{r}_0 \matrx{A}^{-1}_{0} \vec{r}_{0} \right )^{\frac{1}{2}} \right)}{\|\nabla (u_J-v_J) \|}, 
\end{align}
which corresponds to the algebraic error bound~\eqref{eq:BJR_alg}.

The index $I_1$ \eqref{eq:numexp2_effb} corresponds to the estimate~\eqref{estalg:Ruede93_rjdiag} that is proved to be robust with respect to the number of levels~$J$ and consequently also to the size of the finest problem; see \Cref{sec:effectivity_alg} or the original papers \cite{Ruede1993,Harbrecht2016}.
This is what the experiment confirms; see \Cref{fig:numexp2}.
Contrary to that, $I_2$ \eqref{eq:numexp2_effa} 
deteriorates with increasing~$J$. This is with alignment with the discussion at the end of of~\Cref{sec:effectivity_alg}, where we proved the efficiency of the estimate with a factor depending on $\sqrt{J}$.

\begin{figure}
    \setlength\figureheight{6cm}
    \setlength\figurewidth{0.99\textwidth}
    \centering
    %
\definecolor{mycolor1}{RGB}{122, 209, 81}
\definecolor{mycolor2}{RGB}{40, 174, 128}

\begin{tikzpicture}
\begin{axis}[%
width=0.5\figurewidth,
height=\figureheight,
at={(0\figurewidth,0\figureheight)},
xmin=2,
xmax=6,
xtick = {2,3,4,5,6},
xticklabels = {2,3,4,5,6},
xminorticks=true,
xlabel={$\#$ levels},
ylabel = {efficiency indices},
ymin=1,
ymax=3,
axis background/.style={fill=white}
]
\addplot [color=mycolor1, line width=1.5pt, only marks, mark=x, mark options={solid, mycolor1}, forget plot]
  table[row sep=crcr]{%
2	1.060127549	\\
2	1.046178948	\\
2	1.045205057	\\
2	1.044897316	\\
2	1.044515003	\\
2	1.043765215	\\
2	1.042580805	\\
2	1.040983605	\\
2	1.039000125	\\
2	1.036697857	\\
3	1.156230808	\\
3	1.099428302	\\
3	1.058979752	\\
3	1.036765711	\\
3	1.027605621	\\
3	1.024179306	\\
3	1.02265895	\\
3	1.021540175	\\
3	1.020272348	\\
3	1.018670462	\\
3	1.01684869	\\
4	1.287174421	\\
4	1.226975726	\\
4	1.160042401	\\
4	1.10295407	\\
4	1.06265759	\\
4	1.037638001	\\
4	1.022887507	\\
4	1.013993609	\\
4	1.008171786	\\
4	1.003841566	\\
4	1	\\
5	1.322366159	\\
5	1.284365205	\\
5	1.236445975	\\
5	1.183277173	\\
5	1.131905893	\\
5	1.088837323	\\
5	1.056981399	\\
5	1.035376706	\\
5	1.021185624	\\
5	1.011594927	\\
5	1.004607167	\\
6	1.328871615	\\
6	1.299700904	\\
6	1.261496885	\\
6	1.216379537	\\
6	1.168602663	\\
6	1.123639896	\\
6	1.08594398	\\
6	1.057304073	\\
6	1.036959357	\\
6	1.022864346	\\
6	1.012731637	\\
};
\label{plot:rude}
\addplot [color=mycolor2, line width=1.5pt, only marks, mark=o, mark options={solid, mycolor2}, forget plot]
  table[row sep=crcr]{%
2	1.45624915	\\
2	1.415346562	\\
2	1.411369807	\\
2	1.4117484	\\
2	1.412620702	\\
2	1.41327342	\\
2	1.413518754	\\
2	1.413333081	\\
2	1.412717758	\\
2	1.411706356	\\
3	1.935027791	\\
3	1.825946083	\\
3	1.728309644	\\
3	1.662610586	\\
3	1.631036376	\\
3	1.618720922	\\
3	1.614628165	\\
3	1.613708611	\\
3	1.61393788	\\
3	1.614515739	\\
3	1.61536466	\\
4	2.388513338	\\
4	2.276313016	\\
4	2.147571845	\\
4	2.028437127	\\
4	1.932069054	\\
4	1.861313493	\\
4	1.81343213	\\
4	1.783362555	\\
4	1.765679145	\\
4	1.755825868	\\
4	1.750310486	\\
5	2.574526269	\\
5	2.518420368	\\
5	2.437020665	\\
5	2.336665096	\\
5	2.230099708	\\
5	2.130733887	\\
5	2.047380064	\\
5	1.982892355	\\
5	1.935902809	\\
5	1.903146574	\\
5	1.881078256	\\
6	2.65002998	\\
6	2.619189568	\\
6	2.566346797	\\
6	2.491733096	\\
6	2.401571586	\\
6	2.306048991	\\
6	2.215392922	\\
6	2.136744666	\\
6	2.073228832	\\
6	2.024714425	\\
6	1.988940745	\\
};
\label{plot:BJR}
\end{axis}

\end{tikzpicture}%
    \caption{
    Efficiency indices $I_1$ (\ref{plot:rude}) and $I_2$ (\ref{plot:BJR}), \eqref{eq:numexp2_effb} and \eqref{eq:numexp2_effa}, for varying number of levels~$J$. We plot the efficiency for approximations~$v_J$ and for the associated intermediate results after each V-cycle; each corresponds to a single mark.}
    \label{fig:numexp2}
\end{figure}
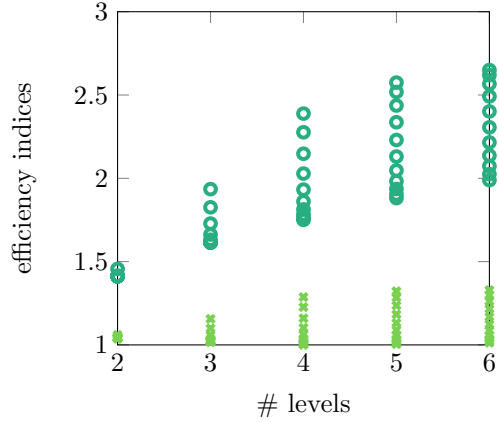

\subsection{Robustness with respect to the size of the coarsest-level problem}
\label{sec:numexp_coarse_size}
The second experiment describes the effect of the size of the coarsest-level problem on the efficiency of the estimates. 
We fix the number of levels to two ($J=1$) and vary the coarse and fine level problems; see \Cref{tab:matrix_sizes_coarse_size}.
For the approximation $v_1$ and intermediate results computed after each V-cycle, we plot the efficiency index
\begin{equation}
\label{eq:numexp1_eff}
      I_3 = \frac{C_{\mathrm{numexp}}  \Big( \vecT{r}_{1} \matrx{D}_1^{-1} \vec{r}_{1}  + \eta \Big)^{\frac{1}{2}}}{\|\nabla (u_1-v_1) \|},
\end{equation}
where $\eta$ denotes the following approximations to $\vecT{r}_0 \matrx{A}^{-1}_{0} \vec{r}_{0} = \|\nabla r_0 \|^2$:
\begin{itemize}
    \item[(i)] $ \eta = \vecT{r}_0 \overline{\vec{c}}_0$, where $\overline{\vec{c}}_0$ is computed using a direct solver for $\matrx{A}_0 \vec{c}_0 = \vec{r}_0$;
    \item[(ii)] $ \eta = \vecT{r}_0 \tilde{\vec{c}}_0$, where $\tilde{\vec{c}}_0$ is computed by 4~iterations of CG on $\matrx{A}_0 \vec{c}_0 = \vec{r}_0$ with a zero initial approximation;
    \item[(iii)] $ \eta = \dfrac{h_{\Omega}^{2}}{\min_{K \in \T_0} h^{2}_K} \vecT{r}_{0} \matrx{D}_0^{-1} \vec{r}_{0}$;
    \item[(iv)] $ \eta = \mu_i^2 + \zeta_i^2$; see \eqref{eq:procedure_bound} and the adaptive approach from \Cref{sec:strategy} using PCG. Here $\zeta_i^2$ is the upper bound on the $\matrx{A}_0$-norm in PCG from \cite[second inequality in (3.5) with updating formula for a coefficient (3.3)]{Meurant2023}. For evaluating $\zeta_i^2$, an estimate of the smallest eigenvalue of $\matrx{A}_0$ is computed by the MATLAB \texttt{eigs} function for the first four problems and extrapolated for the largest problem. In \eqref{eq:strategy_stop} we set $\theta = 0.1$ .
\end{itemize}

\begin{table}[!htb]
\centering
\begin{tabular}{|r|r|r|}
\hline
\multicolumn{1}{|l|}{coarsest-level DoFs} & \multicolumn{1}{l|}{finest-level DoFs} & \multicolumn{1}{l|}{${h_{\Omega}^{2}}/{\min_{K \in \T_0} h^{2}_K}$} \\ \hline
125     & 1 331     & 36    \\ \hline
1 331   & 12 167    & 144   \\ \hline
12 167  & 103 823   & 576   \\ \hline
103 823 & 857 375   & 2 304 \\ \hline
857 375 & 6 967 871 & 9 216 \\ \hline
\end{tabular}
\caption{Size of the problems for the experiment in \Cref{sec:numexp_coarse_size}. The table also gives the squared ratios of the diameter of the computational domain and the coarsest-level meshsize.}
\label{tab:matrix_sizes_coarse_size}
\end{table}

\begin{figure}
    \setlength\figureheight{12cm}
    \setlength\figurewidth{0.99\textwidth}
    \centering
    %
\definecolor{mycolor1}{RGB}{122, 209, 81}%
\definecolor{mycolor2}{RGB}{34, 168, 132}%
\definecolor{mycolor3}{RGB}{42, 120, 142}%
\definecolor{mycolor4}{RGB}{65, 68, 135}%

\begin{tikzpicture}
\begin{axis}[%
width=0.47\figurewidth,
height=0.45\figureheight,
at={(0\figurewidth,0.5\figureheight)},
xmin=1e2,
xmax=1e6,
xtick = {1e2,1e3,1e4,1e5,1e6},
xticklabels = {$10^2$,$10^3$,$10^4$,$10^5$,$10^6$},
xminorticks=true,
xmode=log,
xlabel={$\#$ coarsest-level DoFs},
ylabel = {efficiency index},
ymin=1,
ymax=1.35,
ytick = {1, 1.05, 1.15, 1.25, 1.35},
yticklabels = {1, 1.05, 1.15, 1.25, 1.35},
title = (i) direct solver,
axis background/.style={fill=white}
]
\addplot [color=mycolor1, line width=1.5pt, only marks, mark=x, mark options={solid, mycolor1}, forget plot]
  table[row sep=crcr]{%
125	1.060127549	\\
125	1.046178948	\\
125	1.045205057	\\
125	1.044897316	\\
125	1.044515003	\\
125	1.043765215	\\
125	1.042580805	\\
125	1.040983605	\\
125	1.039000125	\\
125	1.036697857	\\
1331	1.241958141	\\
1331	1.199012731	\\
1331	1.192282256	\\
1331	1.222500744	\\
1331	1.212439568	\\
1331	1.21999045	\\
1331	1.224665118	\\
1331	1.243752779	\\
1331	1.227780914	\\
12167	1.270424473	\\
12167	1.251099661	\\
12167	1.277174017	\\
12167	1.255803172	\\
12167	1.273413859	\\
12167	1.24343544	\\
12167	1.264416157	\\
12167	1.245783336	\\
12167	1.268351459	\\
103823	1.280934038	\\
103823	1.272934696	\\
103823	1.280369841	\\
103823	1.273461628	\\
103823	1.279940637	\\
103823	1.27422859	\\
103823	1.276800894	\\
103823	1.2625785	\\
103823	1.277571254	\\
857375	1.282374813	\\
857375	1.278947532	\\
857375	1.279553315	\\
857375	1.278681946	\\
857375	1.281295424	\\
857375	1.270848126	\\
857375	1.278286488	\\
857375	1.281089581	\\
857375	1.279854347	\\
};
\end{axis}

\begin{axis}[%
width=0.47\figurewidth,
height=0.45\figureheight,
at={(0\figurewidth,0\figureheight)},
xmin=1e2,
xmax=1e6,
xtick = {1e2,1e3,1e4,1e5,1e6},
xticklabels = {$10^2$,$10^3$,$10^4$,$10^5$,$10^6$},
xminorticks=true,
xmode=log,
xlabel={$\#$ coarsest-level DoFs},
ylabel = {efficiency index},
ymin=0.99,
ymax=30,
ytick = {1,10,20,30},
yticklabels = {1,10,20,30},
title = (iii) diagonal approximation,
axis background/.style={fill=white}
]
\addplot [color=mycolor4, line width=1.5pt, only marks, mark=x, mark options={solid, mycolor4}, forget plot]
  table[row sep=crcr]{%
125	3.643481741	\\
125	3.548795	\\
125	3.562505522	\\
125	3.567635116	\\
125	3.572399911	\\
125	3.579406648	\\
125	3.587820462	\\
125	3.596871748	\\
125	3.606222172	\\
125	3.615598463	\\
1331	5.475365475	\\
1331	7.187289423	\\
1331	7.519651387	\\
1331	6.663909092	\\
1331	7.237255081	\\
1331	6.974027357	\\
1331	6.752067312	\\
1331	5.691218262	\\
1331	6.548453535	\\
12167	7.594331288	\\
12167	11.5201775	\\
12167	5.104844821	\\
12167	10.29721426	\\
12167	6.291837076	\\
12167	11.23313155	\\
12167	8.039990797	\\
12167	10.40853205	\\
12167	6.964525459	\\
103823	7.990045524	\\
103823	13.7250479	\\
103823	7.586770692	\\
103823	13.03413794	\\
103823	7.20636941	\\
103823	11.44778497	\\
103823	9.734693637	\\
103823	17.84313534	\\
103823	8.969744381	\\
857375	9.487585085	\\
857375	17.51743141	\\
857375	15.14293264	\\
857375	16.83626299	\\
857375	10.75778222	\\
857375	28.49184729	\\
857375	17.50778078	\\
857375	11.13336615	\\
857375	13.9629496	\\
};
\end{axis}

\begin{axis}[%
width=0.47\figurewidth,
height=0.45\figureheight,
at={(0.5\figurewidth,0.5\figureheight)},
xmin=1e2,
xmax=1e6,
xtick = {1e2,1e3,1e4,1e5,1e6},
xticklabels = {$10^2$,$10^3$,$10^4$,$10^5$,$10^6$},
xminorticks=true,
xmode=log,
xlabel={$\#$ coarsest-level DoFs},
ylabel = {efficiency index},
ymin=0.3,
ymax=1.35,
ytick = {0.3, 0.6, 1, 1.35},
yticklabels = {0.3, 0.6,  1, 1.35},
title = (ii) 4 iterations of CG,
axis background/.style={fill=white}
]
\addplot [color=mycolor3, line width=1.5pt, only marks, mark=x, mark options={solid, mycolor3}, forget plot]
  table[row sep=crcr]{%
125	1.052953762	\\
125	1.041386975	\\
125	1.040707081	\\
125	1.040573096	\\
125	1.040445363	\\
125	1.039987783	\\
125	1.039049527	\\
125	1.037627744	\\
125	1.035763917	\\
125	1.033535311	\\
1331	1.090537644	\\
1331	1.122562996	\\
1331	1.161502496	\\
1331	1.121295003	\\
1331	1.163577878	\\
1331	1.16461258	\\
1331	1.151944576	\\
1331	1.124954089	\\
1331	1.167159049	\\
12167	0.894151311	\\
12167	1.177869434	\\
12167	0.747279677	\\
12167	1.092170551	\\
12167	0.859227929	\\
12167	1.099648561	\\
12167	0.852922481	\\
12167	1.028070438	\\
12167	0.811734957	\\
103823	0.664277809	\\
103823	0.944872136	\\
103823	0.604271656	\\
103823	0.950998885	\\
103823	0.51446871	\\
103823	0.721969875	\\
103823	0.617958347	\\
103823	0.994887473	\\
103823	0.617760879	\\
857375	0.419540647	\\
857375	0.633974861	\\
857375	0.50074618	\\
857375	0.551037089	\\
857375	0.367759372	\\
857375	0.886537728	\\
857375	0.539138841	\\
857375	0.37414455	\\
857375	0.456147149	\\
};
\end{axis}
\begin{axis}[%
width=0.47\figurewidth,
height=0.45\figureheight,
at={(0.5\figurewidth,0\figureheight)},
xmin=1e2,
xmax=1e6,
xtick = {1e2,1e3,1e4,1e5,1e6},
xticklabels = {$10^2$,$10^3$,$10^4$,$10^5$,$10^6$},
xminorticks=true,
xmode=log,
xlabel={$\#$ coarsest-level DoFs},
ylabel = {efficiency index},
ymin=1,
ymax=1.35,
ytick = {1, 1.05, 1.15, 1.25, 1.35},
yticklabels = {1, 1.05, 1.15, 1.25, 1.35},
title = (iv) adaptive CG approximation,
axis background/.style={fill=white}
]
\addplot [color=mycolor2, line width=1.5pt, only marks, mark=x, mark options={solid, mycolor2}, forget plot]
  table[row sep=crcr]{%
125	1.063035423	\\
125	1.060909231	\\
125	1.05875508	\\
125	1.057666199	\\
125	1.056599701	\\
125	1.055414964	\\
125	1.054045523	\\
125	1.052474878	\\
125	1.050731002	\\
125	1.048909537	\\
1331	1.24428591	\\
1331	1.207893296	\\
1331	1.20356704	\\
1331	1.241824051	\\
1331	1.221111451	\\
1331	1.236737634	\\
1331	1.234318921	\\
1331	1.255910934	\\
1331	1.239630094	\\
12167	1.27548755	\\
12167	1.281744451	\\
12167	1.279763172	\\
12167	1.286104778	\\
12167	1.279603729	\\
12167	1.271161411	\\
12167	1.271988005	\\
12167	1.284311264	\\
12167	1.276795799	\\
103823	1.288865338	\\
103823	1.316841398	\\
103823	1.287037246	\\
103823	1.312980409	\\
103823	1.286445416	\\
103823	1.320304378	\\
103823	1.282888109	\\
103823	1.311022803	\\
103823	1.28727102	\\
857375	1.287089274	\\
857375	1.328076966	\\
857375	1.284624962	\\
857375	1.326773106	\\
857375	1.286397041	\\
857375	1.319489497	\\
857375	1.311006872	\\
857375	1.302649872	\\
857375	1.314740602	\\
};
\end{axis}
\end{tikzpicture}%
    \caption{
    Efficiency indices $I_3$ \eqref{eq:numexp1_eff} for the experiment in \Cref{sec:numexp_coarse_size}.
    The estimates differ in the way of approximating the coarsest-level term $\|\nabla r_0 \|^2 = \vecT{r}_0\matrx{A}^{-1}_0\vec{r}_0$. This term is: computed by a direct solver for the coarsest problem (i), approximated using four~iterations of the CG solver (ii), approximated by replacing the stiffness matrix by its scaled diagonal approximation (iii), determined using the adaptive CG approximation (iv).}
    \label{fig:numexp1}
\end{figure}
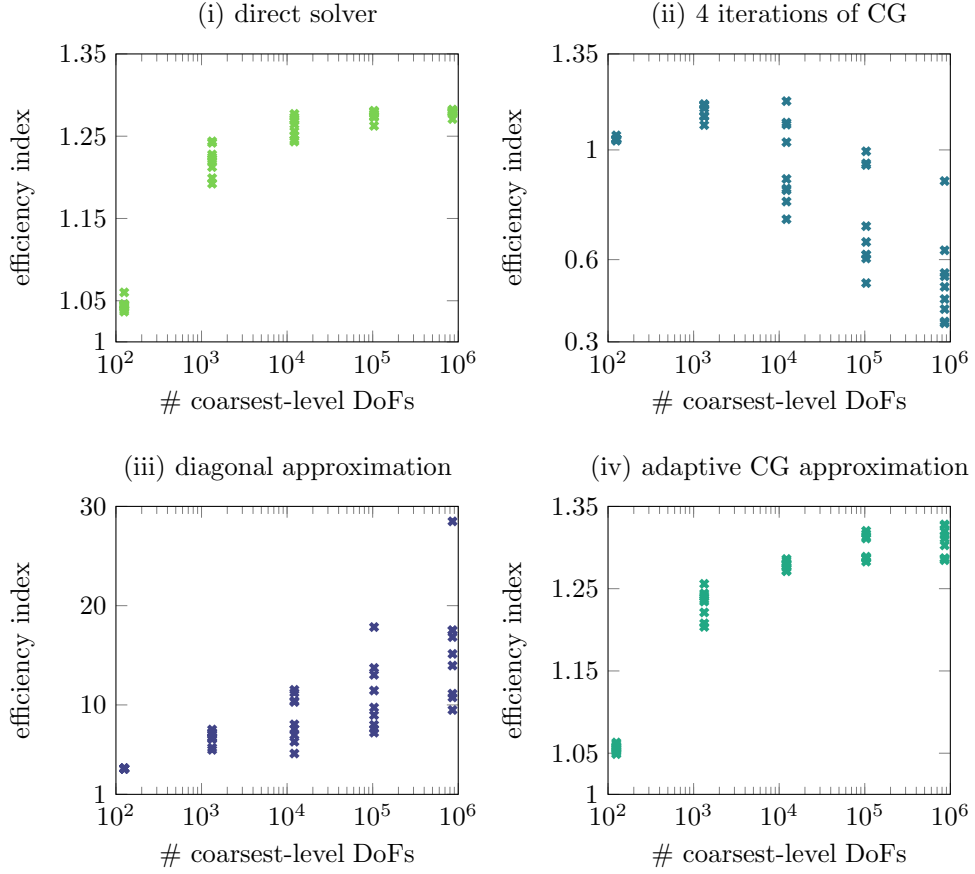

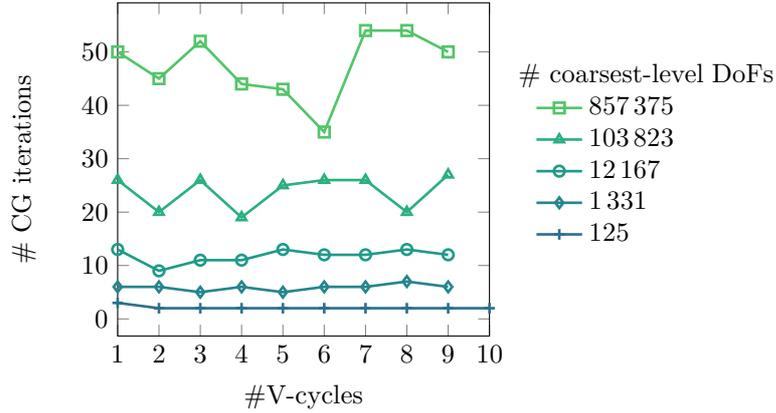
\begin{figure}
    \setlength\figureheight{6cm}
    \setlength\figurewidth{0.99\textwidth}
    \centering
\definecolor{mycolor1}{RGB}{82, 197, 105}
\definecolor{mycolor2}{RGB}{42, 176, 127}
\definecolor{mycolor3}{RGB}{30, 155, 138}
\definecolor{mycolor4}{RGB}{37, 133, 142}
\definecolor{mycolor5}{RGB}{45, 112, 142}

\begin{tikzpicture}
\begin{axis}[%
width=0.5\figurewidth,
height=\figureheight,
at={(0\figurewidth,0\figureheight)},
legend style={at={(1.1,0.75)},anchor=north west, draw = none},
legend cell align={left},
xtick = {1,2,3,4,5,6,7,8,9,10},
xticklabels = {1,2,3,4,5,6,7,8,9,10},
ytick = {0,10,20,30,40,50},
yticklabels = {0,10,20,30,40,50},
xmin=1,
xmax=10,
xlabel={\#V-cycles},
ylabel={\# CG iterations},
yminorticks=true,
axis background/.style={fill=white}
]
\addplot [color=mycolor1, line width=1.0pt, mark=square]
  table[row sep=crcr]{%
1	50	\\
2	45	\\
3	52	\\
4	44	\\
5	43	\\
6	35	\\
7	54	\\
8	54	\\
9	50	\\
};
\addlegendentry{857\,375}
\label{plot:new_iter_size_857375}
\addplot [color=mycolor2, line width=1.0pt, mark=triangle]
  table[row sep=crcr]{%
1	26	\\
2	20	\\
3	26	\\
4	19	\\
5	25	\\
6	26	\\
7	26	\\
8	20	\\
9	27	\\
};
\addlegendentry{103\,823}
\label{plot:new_iter_size_103823}
\addplot [color=mycolor3, line width=1.0pt, mark=o]
  table[row sep=crcr]{%
1	13	\\
2	9	\\
3	11	\\
4	11	\\
5	13	\\
6	12	\\
7	12	\\
8	13	\\
9	12	\\
};
\addlegendentry{12\,167}
\label{plot:new_iter_size_12167};
\addplot [color=mycolor4, line width=1.0pt, mark=diamond]
  table[row sep=crcr]{%
1	6	\\
2	6	\\
3	5	\\
4	6	\\
5	5	\\
6	6	\\
7	6	\\
8	7	\\
9	6	\\
};
\addlegendentry{1\,331}
\label{plot:new_iter_size_1331}

\addplot [color=mycolor5, line width=1.0pt, mark=+]
  table[row sep=crcr]{%
1	3	\\
2	2	\\
3	2	\\
4	2	\\
5	2	\\
6	2	\\
7	2	\\
8	2	\\
9	2	\\
10	2	\\
};
\label{plot:new_iter_size_125}
\addlegendentry{125}
\end{axis}
\node at (6.95,3.45) {\# coarsest-level DoFs};

\end{tikzpicture}
    \caption{
    Number of CG iterations determined by the adaptive approach described in \Cref{sec:strategy}, which is used to estimate the residual norm~$\|\nabla r_0 \|$ associated with the coarsest level. The horizontal axis indicates the number of V-cycles used in computing the approximation~$v_J$.
    }
    \label{fig:numexp1_n_iter}
\end{figure}

\noindent
The factor $C_{\mathrm{numexp}} = 1.28$ accounts for $C_{S}^{\frac{1}{2}} \overline{C}_{B}^{\frac{1}{2}}$; and was set as a minimal value such that the efficiency index \eqref{eq:numexp1_eff} for the variant (i) with the direct solver is above or equal to one.
The results are plotted in \Cref{fig:numexp1}.

The variant (i), where the coarsest-level term is computed using a direct solver, exhibits only a very mild increase of the efficiency index~$I_3$ \eqref{eq:numexp1_eff}. Recall, however, that 
using a direct solver is for large problems in practice unfeasible.

The variant (ii), which uses four iterations of CG to approximate the term on the coarsest level, provides no longer an upper bound on the algebraic error. It is not surprising that a fixed number of CG iterations is not sufficient for problems with increasing size. In the newly proposed adaptive approach, the number of CG iteration varies and it is determined automatically.

For the variant (iii), where the stiffness matrix on the coarsest level is replaced by its scaled diagonal (see \eqref{eq:res0upbound_massspeceq}), the efficiency indices deteriorate with 
the increasing ratio $h_{\Omega}^{2} / \min_{K \in \T_0} h^{2}_K$; see \Cref{tab:matrix_sizes_coarse_size}.
The experiment illustrates that the estimate is not robust with respect to this ratio; see the discussion at the end of \Cref{sec:evaluation_coarse}.

When the term~$\|\nabla r_0 \|$ is approximated using the adaptive computation~(iv) proposed in \Cref{sec:strategy}, the efficiency behaves as in the case~(i). Unlike in~(i), the approximation in~(iv) is computable even for very large problems on the coarsest-level. 
The adaptively chosen number of CG iterations performed within the new procedure is plotted in \Cref{fig:numexp1_n_iter}.

\section{Conclusions}
\label{sec:conclusions}

This paper presents residual-based a~posteriori error estimates on total and algebraic errors in multilevel frameworks inspired by several derivations from the literature.
It starts with algebraic error estimates containing sum of the (scaled) residual norms over the levels, including the coarsest one. Total error estimates incorporate additionally the standard residual-based estimator evaluated on the finest level. 
Efficiency and robustness with respect to the number of levels and the size of the algebraic problem on the coarsest level were for several estimates of this type proved in literature. However, the estimates containing residual norms are not easily computable and applicable in practice.

Approximation of the scaled residual norms, i.e., the terms $\vecT{r}_j \matrx{X}_j^{-1} \vec{r}_{j}$, where $\vec{r}_{j}$ is the algebraic residual associated with the level~$j$ on all but the coarsest level does not represent a significant difficulty. Except for the coarsest level, $\matrx{X}_j$ is the scaled mass matrix denoted in the paper as~$\mMs_j$ and the term $\vecT{r}_j (\mMs_j)^{-1} \vec{r}_{j}$ can be bounded from above by the simpler term~$\vecT{r}_{j} (\matrx{D}_j)^{-1} \vec{r}_{j}$, where $\matrx{D}_j$ is an appropriate diagonal matrix, without affecting the efficiency and robustness.

Evaluating the residual norm~$\|\nabla r_0 \|^2 = \vecT{r}_0 \matrx{A}^{-1}_{0} \vec{r}_{0}$ associated with the coarsest level, where $\matrx{A}_{0}$ is the stiffness matrix, is more subtle. 
When using bounds or techniques to approximate $\vecT{r}_0 \matrx{A}^{-1}_{0} \vec{r}_{0}$ presented in the literature, the resulting (multilevel) estimates on the total and algebraic errors are no longer guaranteed to be independent of the size of the coarsest-level problem. This behaviour is illustrated by numerical experiments.

The approach proposed in this paper approximates the coarsest-level term~$\|\nabla r_0 \|^2$ using the preconditioned conjugate gradient iterates. 
A number of PCG iterations is determined adaptively such that the efficiency of the bound does not deteriorate with increasing size of the coarsest-level problem, and the efficiency and robustness of the multilevel error estimates is preserved. Numerical results support the theoretical findings.

The estimates for total and algebraic errors involve some constants that 
must be approximately determined, which involves heuristics. 
For residual-based error estimates, the constants can be determined for smaller problems with the same or analogous geometry where an approximation with very small algebraic error can be computed; see, e.g., the discussion in~\cite[Section~7]{Becker1995}. 
Since the new result in \Cref{sec:strategy} proves the robustness of the adaptive estimate with respect to the size of the coarsest-level problem, it provides a justification for extrapolating the estimated values of the constants from smaller to larger problems.

In view of a recent trend on using multiple precision in multigrid algorithms (see, e.g., \cite{McCormick2021, Tamstorf2021}), it is worth considering extension of the presented results to include effects of inexact (limited-precision) operations. This will require substantial further analysis. We plan to address this topic in the future.

\backmatter
\bmhead{Acknowledgments}
The authors wish to thank to Peter Oswald for his help on proving \Cref{lemma:decomposition-using-quasi-interpol-to-V-upper-bound} and to Erin C.~Carson for valuable comments, which improved the text.

\bmhead{Statements and Declarations}
Jan Papež and Zdeněk Strakoš are members of Ne\v{c}as Center for Mathematical Modeling.
The work of Petr Vacek was supported by Charles University PRIMUS project no. PRIMUS/19/SCI/11, the Exascale Computing Project (17-SC-20-SC), a collaborative effort of the U.S. Department of Energy Office of Science and the National Nuclear Security Administration, and by the European Union (ERC, inEXASCALE, 101075632). 
The work of Jan Papež has been supported by the Czech Academy of Sciences (RVO~67985840) and by the Grant Agency of the Czech Republic (grant no.~23-06159S).
Views and opinions expressed are those of the authors only and do not necessarily reflect those of the European Union or the European Research Council. Neither the European Union nor the granting authority can be held responsible for them.

\begin{appendices}
\section{Auxiliary results from the theory of PDEs and FEM}
\label{sec:PDElemmas}

The following results are standard in PDE and FEM analysis. They are presented in various forms and sometimes with different names. We provide them in forms suitable for our development, with some standard references where the proofs can be found.

\begin{lemma}[Bramble--Hilbert lemma]
\label{lemma:BH}
There exists a constant $C_{\BH}(\T)>0$ depending only on $d$ and $\gamma_{\T}$ such that for all $K \in \T$
\begin{align}
\inf_{c\in \R} \| w-c \|_{\omega_K} & \leq C_{\BH}(\T) h_{K} \| \nabla w \|_{\omega_K} &&   \forall w \in H^1(\omega_K), \\
\inf_{p\in \mathbb{P}^1(\omega_K)}  \| w-p \|_{\omega_K} & \leq C_{\BH}(\T) h^2_{K} |  w |_{H^2(\omega_K)} &&   \forall w \in H^2(\omega_K).
\end{align}
\end{lemma}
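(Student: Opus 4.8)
The plan is to reduce both inequalities to Poincar\'e--Wirtinger-type estimates on a fixed reference patch of unit diameter, and then transport them to each $\omega_K$ by the affine dilation mapping that reference patch onto $\omega_K$, tracking the powers of $h_K$ produced by the change of variables. First I would establish, for any bounded connected open Lipschitz set $\widehat\omega\subset\R^d$ with $\mathrm{diam}(\widehat\omega)=1$, the two bounds
\begin{gather*}
\inf_{c\in\R}\|\widehat w-c\|_{\widehat\omega}\le C(\widehat\omega)\,\|\nabla\widehat w\|_{\widehat\omega}\quad\forall\,\widehat w\in H^1(\widehat\omega),\\
\inf_{p\in\mathbb{P}^1(\widehat\omega)}\|\widehat w-p\|_{\widehat\omega}\le C(\widehat\omega)\,|\widehat w|_{H^2(\widehat\omega)}\quad\forall\,\widehat w\in H^2(\widehat\omega),
\end{gather*}
by the classical Deny--Lions compactness argument: if, say, the first bound fails, one extracts a sequence $\widehat w_n$ (normalized to have zero mean and unit $L^2$-norm) with $\|\nabla\widehat w_n\|_{\widehat\omega}\to0$; it is bounded in $H^1(\widehat\omega)$, so by Rellich--Kondrachov a subsequence converges in $L^2(\widehat\omega)$ to some $v$ with $\|v\|_{\widehat\omega}=1$, zero mean, and $\nabla v=0$ in the distributional sense, whence $v$ is constant on the connected set $\widehat\omega$ and thus $v=0$ --- a contradiction. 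The second bound is obtained the same way in the quotient $H^2(\widehat\omega)/\mathbb{P}^1(\widehat\omega)$, using that a distribution with vanishing second derivatives on a connected open set is an affine polynomial.

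Next I would carry out the scaling. For $K\in\T$ let $F_K(\widehat x)=h_K\widehat x+b_K$ be the dilation mapping a unit-diameter model patch $\widehat\omega$ onto $\omega_K$; such a $\widehat\omega$ exists because $\mathrm{diam}(\omega_K)$ is bounded by a multiple of $h_K$ depending only on $d$ and $\gamma_{\T}$. With $\widehat w=w\circ F_K$, the change-of-variables identities $\|w-c\|_{\omega_K}=h_K^{d/2}\|\widehat w-c\|_{\widehat\omega}$, $\|\nabla w\|_{\omega_K}=h_K^{d/2-1}\|\nabla\widehat w\|_{\widehat\omega}$ and $|w|_{H^2(\omega_K)}=h_K^{d/2-2}|\widehat w|_{H^2(\widehat\omega)}$ --- together with the fact that affine changes of variables preserve $\mathbb{P}^1$ --- turn the two reference bounds into exactly the two claimed estimates, with $C_{\BH}(\T)=\sup_{K\in\T}C(\widehat\omega_K)$, where $\widehat\omega_K=F_K^{-1}(\omega_K)$.

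The crux, and the step I expect to be the main obstacle, is to show that this supremum is finite and controlled purely by $d$ and $\gamma_{\T}$, since a priori $C(\widehat\omega)$ depends on the shape of $\widehat\omega$ and there are infinitely many patches. Here I would use shape regularity twice: it bounds the number of simplices meeting $K$ by a constant depending only on $d$ and $\gamma_{\T}$, and it forces every simplex in $\omega_K$ to have diameter comparable to $h_K$ (propagating size comparisons along the stars of the vertices of $K$, in which consecutive simplices share a facet and hence have comparable diameters). Consequently the rescaled patches $\widehat\omega_K$ form a family of unit-diameter, connected unions of a bounded number of non-degenerate simplices whose geometric data lie in a compact set determined by $d$ and $\gamma_{\T}$; since $C(\widehat\omega)$ depends continuously on that data and blows up only as a simplex degenerates or the patch disconnects --- both excluded by \eqref{eq:shaperegularity} --- the supremum is finite. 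Equivalently, one may split the patches into the finitely many possible combinatorial types and, for each type, view $C(\widehat\omega)$ as a continuous function of finitely many shape parameters ranging over the compact region cut out by shape regularity. Taking the maximum of the two constants yields a single $C_{\BH}(\T)$ depending only on $d$ and $\gamma_{\T}$.
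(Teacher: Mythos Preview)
The paper does not prove this lemma itself; it merely cites \cite[p.~490]{Scott1990} and references therein. Your proposal therefore goes well beyond what the paper supplies, and the overall strategy --- Deny--Lions compactness on a reference patch, affine scaling, and a uniformity argument based on shape regularity --- is a valid and standard route to the result.

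Two minor remarks. First, with $F_K(\widehat x)=h_K\widehat x+b_K$ the rescaled patches $\widehat\omega_K=F_K^{-1}(\omega_K)$ are not literally of unit diameter; their diameters lie in $[1,C(d,\gamma_\T)]$ since $h_K\le\mathrm{diam}(\omega_K)\le C(d,\gamma_\T)h_K$. This does not affect the argument but the wording should be adjusted. Second, your final compactness step (continuity of $C(\widehat\omega)$ in the shape parameters over a compact set of admissible configurations) is correct in spirit but somewhat soft; the reference the paper cites proceeds more constructively by observing that each $\omega_K$ is star-shaped with respect to a ball of radius comparable to $h_K$ and invoking the Bramble--Hilbert lemma for star-shaped domains, which yields an explicit dependence on the chunkiness parameter and hence on $d$ and $\gamma_\T$. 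Either route is acceptable.
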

\noindent
For the proof, see, e.g., \cite[p.~490]{Scott1990} and references therein.

\begin{lemma}[Friedrich's inequality]
\label{lemma:Friedrichs}
Let $\omega\subset \R^d$ be a bounded domain. There exists a constant $C_{F}(\omega)>0$ such that for all $w \in H^1(\omega)$ which have a zero trace on a part of the boundary $\partial \omega$ of nonzero measure
\begin{equation}
\label{eq:friedrichs}
  \| w \|_{\omega}\leq C_{F}(\omega) h_{\omega} \| \nabla w \|_{\omega}.  
\end{equation}
\end{lemma}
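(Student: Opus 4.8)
The plan is to first strip off the factor $h_\omega$ by a dilation argument and then establish the scale-invariant Poincaré--Friedrichs inequality on a normalized domain by a compactness argument. Set $\hat\omega = h_\omega^{-1}\omega$, so that $\hat\omega$ has diameter one, let $\Gamma \subset \partial\omega$ denote the zero-trace portion, and for $w \in H^1(\omega)$ define $\hat w \in H^1(\hat\omega)$ by $\hat w(y) = w(h_\omega y)$. A change of variables gives $\|w\|_\omega = h_\omega^{d/2}\|\hat w\|_{\hat\omega}$ and, by the chain rule, $\|\nabla w\|_\omega = h_\omega^{d/2-1}\|\nabla \hat w\|_{\hat\omega}$; moreover $\hat w$ has zero trace on $\hat\Gamma = h_\omega^{-1}\Gamma$, which again has positive $(d-1)$-dimensional measure. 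Hence \eqref{eq:friedrichs} is equivalent to the scale-free bound $\|\hat w\|_{\hat\omega} \le C\,\|\nabla \hat w\|_{\hat\omega}$, and it suffices to prove the latter on a bounded Lipschitz domain of unit diameter with a prescribed zero-trace boundary portion $\hat\Gamma$.

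To prove the scale-free inequality I would argue by contradiction. Suppose no constant works; then there is a sequence $(w_n) \subset H^1(\omega)$ with zero trace on $\Gamma$, normalized by $\|w_n\|_\omega = 1$, such that $\|\nabla w_n\|_\omega \to 0$. The sequence is bounded in $H^1(\omega)$, so by the Rellich--Kondrachov theorem a subsequence (not relabeled) converges strongly in $L^2(\omega)$ to some $w$; since the gradients tend to zero, $w_n \to w$ in fact in $H^1(\omega)$ with $\nabla w = 0$. Because $\omega$ is connected (being a domain), $w$ is a constant $c$. The trace operator $H^1(\omega) \to L^2(\partial\omega)$ is bounded, hence continuous; since each $w_n$ has vanishing trace on $\Gamma$, so does $w \equiv c$, which forces $c = 0$ as $|\Gamma| > 0$. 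This contradicts $\|w\|_\omega = \lim_n \|w_n\|_\omega = 1$. Therefore the constant exists, and one may take $C_F(\omega)$ to be the (scale-invariant) best constant on the normalized domain.

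The main obstacle — and the only place where the geometry of $\omega$ enters beyond connectedness — is making the two functional-analytic inputs rigorous: the compactness of the embedding $H^1(\omega) \hookrightarrow L^2(\omega)$ and the existence and continuity of the trace map onto $L^2(\partial\omega)$. Both hold whenever $\omega$ is a bounded Lipschitz domain (and, a fortiori, for a polytope $\Omega$ or for an element patch $\omega_K$ built from shape-regular simplices, which is the only setting needed here): one has an extension operator to $H^1(\mathbb{R}^d)$, after which Rellich--Kondrachov on a ball and the standard trace theorem apply. One should also note that this argument yields only the existence of $C_F(\omega)$, with no explicit value; an explicit (though generally non-sharp) constant can instead be obtained for convex $\omega$, or for $\omega$ star-shaped with respect to $\Gamma$, by integrating $|w|^2$ along segments emanating from $\Gamma$ and combining $w|_\Gamma = 0$ with the fundamental theorem of calculus, which reproduces the bound with a constant proportional to $h_\omega$.
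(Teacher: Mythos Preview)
The paper does not supply its own proof of this lemma: it merely states Friedrich's inequality and refers to \cite[Chapter~18]{RekBook80}, adding only the remark that on element patches $\omega_K$ the constant can be bounded uniformly by a $C_F(\T)$ depending on $d$ and $\gamma_\T$. Your proposal therefore goes well beyond what the paper does. The argument you give --- a dilation to a unit-diameter domain followed by a Rellich--Kondrachov compactness contradiction --- is the standard and correct route, and your identification of the two functional-analytic ingredients (compact embedding and continuity of the trace) as the places where Lipschitz regularity is needed is exactly right. One small point: your final paragraph mentions that an explicit constant can be obtained for star-shaped domains by integrating along rays; this is a useful aside but not needed for the lemma as stated, which only asserts existence of $C_F(\omega)$.
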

\noindent
When using Friedrich's inequality on patches associated with the elements of the triangulation $\T$, there exists a constant $C_F(\T)$ depending only on $d$ and $\gamma_{\T}$ such that for all $K\in \T$
\begin{equation*}
    C_F(\omega_K)\leq C_F(\T);
\end{equation*}
see, e.g., \cite[Chapter~18]{RekBook80}.

\begin{lemma}[Trace inequality]
\label{lemma:trace_ineq}
There exists a constant $C_{\TR}(\T)>0$ depending only on $d$ and $\gamma_{\T}$ such that for all $K \in \T$ and all $w \in H^1(K)$
\begin{equation}
\label{eq:trace_ineq}
\| w \|^2_{\partial K} \leq C_{\TR}(\T) \left(  h_{K}^{-1} \| w \|^2_{K} + h_{K} \| \nabla w \|^2_{K} \right).
\end{equation}
\end{lemma}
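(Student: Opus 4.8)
The plan is to first reduce the claim to smooth functions, then to prove the estimate face by face via the divergence theorem applied to a radial vector field, and finally to sum over the $d+1$ faces of $K$. For the reduction I would use that, $K$ being a (convex, hence Lipschitz) simplex, $C^\infty(\overline K)$ is dense in $H^1(K)$; thus it suffices to establish \eqref{eq:trace_ineq} for $w\in C^\infty(\overline K)$. The general case then follows by approximating $w$ in $H^1(K)$ by smooth functions, applying the inequality to the differences to see that the boundary restrictions form a Cauchy sequence in $L^2(\partial K)$, and passing to the limit (the limit being, by definition, the trace of $w$).

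For the face estimate I would fix a face $E$ of $K$, take $z_E\in\N_K$ to be the vertex of $K$ opposite to $E$, and apply the divergence theorem to $V(x)=w(x)^2\,(x-z_E)$. Since $\nabla\cdot(x-z_E)=d$, this gives
\begin{equation*}
\int_{\partial K} w^2\,(x-z_E)\cdot n\,\mathrm{d}s = d\,\|w\|^2_K + 2\int_K w\,\nabla w\cdot(x-z_E)\,\mathrm{d}x .
\end{equation*}
Next I would observe that on every face $E'\neq E$ the vector $x-z_E$ is tangential to $E'$ (both $x$ and $z_E$ lie in the hyperplane of $E'$), so only $E$ contributes to the boundary integral, and there $(x-z_E)\cdot n$ equals the constant altitude $d_E:=\mathrm{dist}(z_E,\mathrm{hyp}(E))>0$. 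Bounding $|x-z_E|\le h_K$ on $K$ and using $2ab\le a^2+b^2$ then yields
\begin{equation*}
d_E\,\|w\|^2_E \le (d+1)\,\|w\|^2_K + h_K^2\,\|\nabla w\|^2_K .
\end{equation*}

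It then remains to bound $d_E$ from below, and this is the only step requiring genuine care. I would argue geometrically: the center of the largest inscribed ball lies in $K$ at distance $\tfrac12\rho_K$ from the hyperplane of every face, while $z_E$ realizes the maximal distance of a point of $K$ to $\mathrm{hyp}(E)$ (the signed distance is an affine function on the simplex, vanishing at the other $d$ vertices, so its maximum over $K$ is attained at $z_E$); hence $d_E\ge\tfrac12\rho_K\ge h_K/(2\gamma_{\T})$ by shape regularity \eqref{eq:shaperegularity}. Dividing the face estimate by $d_E$ and summing over the $d+1$ faces of $K$ would then give \eqref{eq:trace_ineq} with, e.g., $C_{\TR}(\T)=2(d+1)^2\gamma_{\T}$, which depends only on $d$ and $\gamma_{\T}$. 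The hard part is precisely the comparison of the smallest altitude $d_E$ with the inradius; everything else is routine. An alternative, somewhat messier route would transport the estimate from a fixed reference simplex by the affine element map, but that requires tracking the transformation of the surface measure on $\partial K$ in addition to the volume Jacobian and the norm of the inverse Jacobian.
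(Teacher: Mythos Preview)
Your proof is correct and self-contained. The paper does not actually supply a proof of this lemma but only cites \cite[Proposition~4.1]{Carstensen1999}; your direct argument via the divergence theorem applied to the radial field $w^2(x-z_E)$ is a standard and clean route, and it additionally yields the explicit constant $C_{\TR}(\T)=2(d+1)^2\gamma_{\T}$, which the citation-only treatment does not provide.
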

\noindent
For the proof, see, e.g., \cite[Proposition~4.1]{Carstensen1999}. 
\begin{lemma}[Inverse inequality]
\label{lemma:local_inverse_ineq}
There exists a constant $C_{\INV}(\T)>0$ depending only on $d$ and $\gamma_{\T}$ such that for all $K \in \T$ and all $w_{\T} \in S_{\T}$
\begin{equation}
\label{eq:local_inverse_ineq}
\| \nabla w_{\T} \|_{K} \leq  C_{\INV}(\T) h^{-1}_K \| w_{\T} \|_{K}.
\end{equation}
\end{lemma}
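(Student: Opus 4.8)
The plan is to prove this by the standard scaling (reference element) argument. Since for any $w_{\T}\in S_{\T}$ the restriction $w_{\T}|_K$ is an affine polynomial, the global membership plays no role: it suffices to establish that there is a constant $C_{\INV}(\T)>0$, depending only on $d$ and $\gamma_{\T}$, such that $\|\nabla p\|_K \le C_{\INV}(\T)\,h_K^{-1}\|p\|_K$ for every $p\in\mathbb{P}^1(K)$ and every $K\in\T$.

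First I would fix a reference simplex $\hat K\subset\R^d$ and use that $\mathbb{P}^1(\hat K)$ is finite dimensional, so the $H^1(\hat K)$-norm and the $L^2(\hat K)$-norm are equivalent on it. Hence there is a constant $\hat C$, depending only on $d$ (through the choice of $\hat K$), with $\|\hat\nabla\hat p\|_{\hat K}\le\|\hat p\|_{H^1(\hat K)}\le\hat C\,\|\hat p\|_{L^2(\hat K)}$ for all $\hat p\in\mathbb{P}^1(\hat K)$; bounding by the full $H^1(\hat K)$-norm on the left circumvents the fact that $\hat p\mapsto\|\hat\nabla\hat p\|_{\hat K}$ is only a seminorm. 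Next I would transfer this to $K$ via the affine bijection $F_K(\hat x)=B_K\hat x+b_K$ with $F_K(\hat K)=K$, setting $\hat p:=p\circ F_K$. The change of variables gives $\|p\|_{L^2(K)}^2=|\det B_K|\,\|\hat p\|_{L^2(\hat K)}^2$, while the chain rule $\nabla p=B_K^{-\top}\hat\nabla\hat p$ yields $\|\nabla p\|_{L^2(K)}^2\le|\det B_K|\,\|B_K^{-1}\|^2\|\hat\nabla\hat p\|_{L^2(\hat K)}^2$. Combining these with the reference estimate, the factor $|\det B_K|^{1/2}$ cancels and one obtains $\|\nabla p\|_{L^2(K)}\le\hat C\,\|B_K^{-1}\|\,\|p\|_{L^2(K)}$.

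It then remains to bound $\|B_K^{-1}\|$ in terms of $h_K$. The standard geometric estimates (see, e.g., \cite{Scott1990}) give $\|B_K^{-1}\|\le c(\hat K)\,\rho_K^{-1}$, and combining this with the shape-regularity bound \eqref{eq:shaperegularity}, $h_K/\rho_K\le\gamma_{\T}$, yields $h_K\|B_K^{-1}\|\le c(\hat K)\,\gamma_{\T}$. Therefore the claim holds with $C_{\INV}(\T)=\hat C\,c(\hat K)\,\gamma_{\T}$, which depends only on $d$ and $\gamma_{\T}$, as asserted.

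I do not expect a genuine obstacle here, since this is a classical finite element estimate. The only point requiring care is the bookkeeping of the affine-map quantities $\|B_K\|$, $\|B_K^{-1}\|$, and $|\det B_K|$ and their relation to $h_K$ and $\rho_K$; this is exactly where shape regularity enters, ensuring that the constant does not degenerate as the mesh is refined.
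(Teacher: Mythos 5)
Your proof is correct. The paper does not prove this lemma; it simply cites \cite[Lemma~1.27]{Elman2005}, and your reference-element scaling argument is precisely the standard proof given in such references: norm equivalence of $\|\hat\nabla\cdot\|_{\hat K}$ and $\|\cdot\|_{\hat K}$ on the finite-dimensional space $\mathbb{P}^1(\hat K)$, transfer through the affine map $F_K$ with the factors $|\det B_K|$, $\|B_K^{-1}\|$, and closing via the geometric estimate $\|B_K^{-1}\|\lesssim\rho_K^{-1}$ together with shape regularity \eqref{eq:shaperegularity}.
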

\noindent
For the proof, see, e.g., \cite[Lemma~1.27]{Elman2005}.

The following lemma is a consequence of \Cref{lemma:trace_ineq} and~\Cref{lemma:local_inverse_ineq}.

\begin{lemma}\label{lemma:trace+inverse_ineq}
There exists a constant $C_{\TI}(\T)>0$ depending only on $d$ and $\gamma_{\T}$ such that for all $K \in \T$ and all $w_{\T} \in S_{\T}$
\begin{equation}
\| w_{\T}\|^2_{\partial K} \leq C_{\TI}(\T) h^{-1}_K \| w_{\T}\|^2_{K}.
\end{equation}
\end{lemma}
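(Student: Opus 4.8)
The plan is to combine the trace inequality (\Cref{lemma:trace_ineq}) with the inverse inequality (\Cref{lemma:local_inverse_ineq}), exactly as the sentence preceding the statement suggests. Fix $K \in \T$ and $w_{\T} \in S_{\T}$. Since $w_{\T}|_K \in \mathbb{P}^1(K) \subset H^1(K)$, \Cref{lemma:trace_ineq} applies and gives
\[
\| w_{\T} \|^2_{\partial K} \leq C_{\TR}(\T) \left( h_K^{-1} \| w_{\T} \|^2_K + h_K \| \nabla w_{\T} \|^2_K \right).
\]

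Next I would control the second term on the right using \Cref{lemma:local_inverse_ineq}, squaring the bound $\| \nabla w_{\T} \|_K \leq C_{\INV}(\T) h_K^{-1} \| w_{\T} \|_K$ to obtain $\| \nabla w_{\T} \|^2_K \leq C_{\INV}(\T)^2 h_K^{-2} \| w_{\T} \|^2_K$, so that $h_K \| \nabla w_{\T} \|^2_K \leq C_{\INV}(\T)^2 h_K^{-1} \| w_{\T} \|^2_K$. Substituting into the displayed inequality yields
\[
\| w_{\T} \|^2_{\partial K} \leq C_{\TR}(\T)\left( 1 + C_{\INV}(\T)^2 \right) h_K^{-1} \| w_{\T} \|^2_K,
\]
so the claim holds with $C_{\TI}(\T) := C_{\TR}(\T)\left( 1 + C_{\INV}(\T)^2 \right)$.

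Finally I would remark on the dependence of the constant: since $C_{\TR}(\T)$ and $C_{\INV}(\T)$ depend only on $d$ and $\gamma_{\T}$ by \Cref{lemma:trace_ineq,lemma:local_inverse_ineq}, the product $C_{\TI}(\T)$ does as well, which is exactly what the statement asserts. There is essentially no hard step here: the only point to be mindful of is that both auxiliary lemmas are being invoked on the single element $K$ (the inverse inequality is stated for $w_{\T} \in S_{\T}$, and we simply use its elementwise consequence), and that the $h_K$ factors combine cleanly because the trace and inverse inequalities are scaled consistently. The argument is otherwise a one-line substitution.
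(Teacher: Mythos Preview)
Your proposal is correct and follows exactly the same route as the paper: apply the trace inequality, then bound the gradient term via the (squared) inverse inequality, arriving at $C_{\TI}(\T) = C_{\TR}(\T)(1 + C_{\INV}(\T)^2)$. There is nothing to add.
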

\begin{proof}
Bounding $ \| w_{\T} \|^2_{\partial K}$ using the trace inequality yields
\begin{equation*}
\|w_{\T}\|^2_{\partial K} \leq C_{\TR}(\T) \left( h^{-1}_K \| w_{\T} \|^2_{K} + h_K \| \nabla w_{\T} \|^2_{K} \right). 
\end{equation*}
Applying the inverse inequality gives
\begin{align*}
\| w_{\T} \|^2_{\partial K} & \leq C_{\TR} (\T) \left( h^{-1}_K \| w_{\T} \|^2_{K} + h^{-1}_K  C^2_{\INV}(\T)  \| w_{\T} \|_{K} \right)  \\
&= C_{\TR}(\T) ( 1 + C^2_{\INV}(\T)) h^{-1}_K  \| w_{\T} \|^2_{K}.
\end{align*}
\end{proof}


\section{Quasi-interpolation operators}
\label{sec:quasi-interpolation-operator}

A quasi-interpolation operator is not explicitly used in the construction of the estimators but it is a crucial tool for proving the bounds.
In this section we present a quasi-interpolation operator as a generalization of nodal interpolation to integrable functions. We consider the quasi-interpolation operator used in \cite{Oswald1994a}, which is closely related to the operator from \cite{Scott1990}. Other, slightly different quasi-interpolation operators can be found, e.g., in  \cite{ Clement1975,Verfurth2013,Carstensen1999}. We list and prove some of the properties of the operator to be used later. The proofs of the properties are based on standard techniques. To keep the text self-contained and formally accurate we provide most of the proofs below.

The results in this section are mostly derived for a single mesh~$\T$. We show that the constants only depend on the dimension~$d$ and the shape-regularity~$\gamma_\T$ and therefore we can again use them in the mesh hierarchy with the dependence on~$d$ and~$\gamma_{0}$.

\subsection{Nodal interpolation and its generalization}

For a node $z \in \N_ \T$, let $\Psi_z:C(\overline{\Omega}) \rightarrow \R$ denote the linear functional evaluation at point~$z$, i.e.,
\begin{equation*}
\Psi_z(w)=w(z) \quad \forall w \in C(\overline{\Omega}).
\end{equation*}
The standard nodal interpolation operator $\mathcal{I}: C(\overline{\Omega}) \to S_\mathcal{T}$ for continuous functions is defined as (see, e.g., \cite{Ciarlet1978,Brenner2007})
\begin{equation*}
\mathcal{I} w = \sum_{z \in \N_\T} \Psi_z(w) \phi_z \quad	\forall w \in C(\overline{\Omega}).
\end{equation*}
In order to construct an analogy of the operator $\mathcal{I}$ for functions from $L^1(\Omega)$, the point evaluation is replaced by an appropriate average of the approximated function.
We will consider the quasi-interpolation operator defined in \cite{Oswald1994a} and \cite{Stevenson2007}.

For a node $z \in \N_{\T}$, let $K_z$ be a fixed element having $z$ as its vertex, i.e., $z\in K_z$. Let $ \mathbb{P}^1(K_z)$ denote the space of linear polynomials on $K_z$ and denote by $\widetilde{\Psi}_{z}$ the restriction of the linear functional $\Psi_z$ to functions from $\mathbb{P}^1(K_z)$. Since $\mathbb{P}^1(K_z)$ is a finite-dimensional space, the linear functional $\widetilde{\Psi}_{z}$ is bounded and it therefore belongs to the dual space $(\mathbb{P}^1(K_z))^{\#}$. Considering the space $\mathbb{P}^1(K_z)$ equipped with the $L^2$-inner product, the Riesz representation theorem (see, e.g., \cite[Sect.~2.4]{Brenner2007}) yields the existence of a function \mbox{$\psi_z \in \mathbb{P}^1(K_z)$} such that
\begin{equation*}
\widetilde{\Psi}_{z}(w)=w(z)=\int_{K_z} w \psi_z, \quad \forall w \in \mathbb{P}^1(K_z).
\end{equation*}
Since $\psi_z$ is the Riesz representation of the point evaluation at $z$, it holds for all $z_1,z_2 \in \N_{\T}$ (recall that $\phi_{z_2}$ is the hat function associated with $z_2$) that
\begin{equation}
\label{eq:quasi-iterpol-zero-one}
\int_{K_{z_1}} \phi_{z_2} \psi_{z_1}=\phi_{z_2}(z_1) =
\begin{cases}
	 1 & z_1=z_2, \\
     0 & z_1\neq z_2.
\end{cases}
\end{equation}

We will consider the quasi-interpolation operators defined as follows
\begin{align}
& I_{S_\T}: L^1(\Omega) \rightarrow S_\T,\quad I_{S_\T} w = \sum_{z \in \N_{\T}} \left(  \int_{K_z} w \psi_z \right) \phi_z, \\
& I_{V_\T}: L^1(\Omega) \rightarrow V_\T,\quad I_{V_\T} w = \sum_{z \in \K_{\T}} \left(  \int_{K_z} w \psi_z \right) \phi_z.
\end{align}

These definitions and relation \eqref{eq:quasi-iterpol-zero-one} imply that $I_{S_{\T}}$ and $I_{V_{\T}}$ are projections onto $S_\T$ and $V_\T$, respectively. Further, $I_{S_{\T}}$ preserves linear polynomials on $\Omega$ and $I_{V_{\T}}$ preserves linear polynomials on $\omega_K$ for any element $K \in \T$ whose patch~$\omega_K$ does not intersect with the boundary of $\Omega$, i.e., $\overline{\omega_K} \cap \partial \Omega = \emptyset$.

\subsection{Local estimates}
\label{sec:interpolation_localestims}
\noindent
We now present local (elementwise) bounds on an interpolant $I_{S_{\T}} w $ and the interpolation error $w - I_{S_{\T}} w $.

\begin{thm}\label{thm:quasi-interpolation_to_S_estimates}
There exist positive constants $\widehat{C}_{I_{S_{\T}},\ell}$, $\ell = 1,2,3,4$, depending only on~$d$ and~$\gamma_{\T}$ such that for all elements $K\in\T$, 
\begin{align}
\label{eq:quasi-interpolation_to_S_L2_stab}
 \| I_{S_{\T}} w \|_{K} & \leq \widehat{C}_{I_{S_{\T}},1} \| w \|_{\omega_K}   &&\forall w \in L^2(\omega_K), \\
 \label{eq:quasi-interpolation_to_S_L2_approx}
\| w - I_{S_{\T}} w \|_{K} & \leq \widehat{C}_{I_{S_{\T}},2} h_K \| \nabla w \|_{\omega_K}  &&\forall w \in H^1(\omega_K),  \\
\label{eq:quasi-interpolation_to_S_L2_approx_H2}
\| w - I_{S_{\T}} w \|_{K} & \leq \widehat{C}_{I_{S_{\T}},3} h^2_K | w |_{H^2(\omega_K)}  &&\forall w \in H^2(\omega_K),  \\
\label{eq:quasi-interpolation_to_S_H1_stab}
\| \nabla I_{S_{\T}} w \|_{K} & \leq \widehat{C}_{I_{S_{\T}},4} \| \nabla w \|_{\omega_K}   &&\forall w \in H^1(\omega_K).
\end{align}
\end{thm}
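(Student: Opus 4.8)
The plan is to establish all four estimates by a standard scaling/reference-element argument combined with the Bramble--Hilbert lemma (Lemma \ref{lemma:BH}), exploiting that $I_{S_\T}$ is a projection that reproduces linear polynomials on $\Omega$. First I would record the elementary stability bound on each local functional: since $\psi_z \in \mathbb{P}^1(K_z)$ is the $L^2(K_z)$-Riesz representative of point evaluation, a scaling argument on the reference simplex shows $\|\psi_z\|_{K_z} \le C |K_z|^{-1/2}$ with $C$ depending only on $d$, hence by Cauchy--Schwarz $|\int_{K_z} w\psi_z| \le C |K_z|^{-1/2}\|w\|_{K_z}$. Multiplying by the hat function $\phi_z$ (which satisfies $\|\phi_z\|_K \le C|K|^{1/2}$ and $\|\nabla\phi_z\|_K \le C h_K^{-1}|K|^{1/2}$, again by scaling) and summing over the at most $C(d,\gamma_\T)$ nodes of $K$, together with shape regularity comparing $|K_z|$ and $|K|$ for neighbouring elements, yields \eqref{eq:quasi-interpolation_to_S_L2_stab} and the analogous gradient bound $\|\nabla I_{S_\T} w\|_K \le C\|w\|_{\omega_K}/h_K$ that will be needed below.

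For the approximation estimates \eqref{eq:quasi-interpolation_to_S_L2_approx} and \eqref{eq:quasi-interpolation_to_S_L2_approx_H2}, the key step is polynomial invariance: because $I_{S_\T}$ reproduces (global) linear polynomials, for any $p \in \mathbb{P}^1(\omega_K)$ we have $w - I_{S_\T} w = (w-p) - I_{S_\T}(w-p)$ on $K$, so
\begin{equation*}
\| w - I_{S_\T} w \|_K \le \| w-p \|_K + \| I_{S_\T}(w-p) \|_K \le \| w-p \|_{\omega_K} + \widehat{C}_{I_{S_\T},1} \| w-p \|_{\omega_K}.
\end{equation*}
Taking the infimum over $p \in \mathbb{P}^1(\omega_K) \supset \R$ and applying the two inequalities of Lemma \ref{lemma:BH} (the constant version for \eqref{eq:quasi-interpolation_to_S_L2_approx}, the $\mathbb{P}^1$ version for \eqref{eq:quasi-interpolation_to_S_L2_approx_H2}) gives the claimed bounds with $\widehat{C}_{I_{S_\T},2}, \widehat{C}_{I_{S_\T},3}$ depending only on $d, \gamma_\T$ through $C_{\BH}(\T)$ and $\widehat{C}_{I_{S_\T},1}$.

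Finally, for the $H^1$-stability \eqref{eq:quasi-interpolation_to_S_H1_stab} I would again subtract a polynomial: $\nabla I_{S_\T} w = \nabla I_{S_\T}(w-c)$ on $K$ for any constant $c$, so using the local gradient stability from the first step followed by the Bramble--Hilbert lemma,
\begin{equation*}
\| \nabla I_{S_\T} w \|_K \le C h_K^{-1} \| w - c \|_{\omega_K} \le C h_K^{-1} C_{\BH}(\T) h_K \| \nabla w \|_{\omega_K} = \widehat{C}_{I_{S_\T},4} \| \nabla w \|_{\omega_K}.
\end{equation*}
I expect the main obstacle to be the bookkeeping in the first step: carefully tracking through the scaling arguments that every constant ($\|\psi_z\|$, $\|\phi_z\|$, $\|\nabla\phi_z\|$, the comparison of $|K_z|$ with $|K|$ across a patch, and the number of nodes/neighbours) depends only on $d$ and the shape-regularity $\gamma_\T$ — and in particular that picking the anchor element $K_z$ for each node does not introduce hidden dependence — rather than any single hard estimate; the polynomial-invariance arguments for \eqref{eq:quasi-interpolation_to_S_L2_approx}--\eqref{eq:quasi-interpolation_to_S_H1_stab} are then routine once the stability bounds are in place.
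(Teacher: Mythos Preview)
Your proposal is correct and follows essentially the same approach as the paper: scaling bounds on $\psi_z$ and $\phi_z$ (the paper uses the $L^\infty$ bound $\|\psi_z\|_{L^\infty(K_z)}\le C_\psi|K_z|^{-1}$ together with H\"older rather than your $L^2$ bound with Cauchy--Schwarz, but the resulting estimate on the nodal functional is the same), then polynomial reproduction plus Bramble--Hilbert for \eqref{eq:quasi-interpolation_to_S_L2_approx}--\eqref{eq:quasi-interpolation_to_S_L2_approx_H2}, and constant-subtraction plus Bramble--Hilbert for \eqref{eq:quasi-interpolation_to_S_H1_stab}. The only cosmetic difference is that the paper carries out the gradient computation for \eqref{eq:quasi-interpolation_to_S_H1_stab} directly rather than first isolating your intermediate bound $\|\nabla I_{S_\T} w\|_K \le C h_K^{-1}\|w\|_{\omega_K}$.
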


\begin{proof}
The steps in the proof are inspired by  
\cite[pp.~17--18]{Oswald1994a} and \cite[Sections~3--4]{Scott1990}.

Using standard affine transformation to a reference element it can be shown that there exists a constant $C_{\psi}>0$ depending only on~$d$ and $\gamma_\mathcal{T}$ such that for all $z\in \N_\T$,
\begin{equation}
\label{eq:psi_L_infty_bound}
\| \psi_z \|_{L^{\infty}(K_z)} \leq C_{\psi} \meas{K_z}^{-1},
\end{equation}
and that there exists a constant $C_{\phi}>0$ depending only on~$d$ and $\gamma_\mathcal{T}$ such that for all $K\in \T$ and all $z\in \K_K$,
\begin{equation}\label{eq:phi_L_infty_bound}
\| \nabla \phi_z \|_{L^{\infty}(K)} \leq C_{\phi} \rho^{-1}_K;
\end{equation}
see, e.g., \cite[pp.~487--488]{Scott1990}.

Using Hölder's inequality and \eqref{eq:psi_L_infty_bound} we can show that for  all $z \in \N_\T$ and all $w \in L^2(K_z)$
\begin{equation}
\label{eq:quasi}
\left \lvert \int_{K_z} w \psi_z  \right \rvert ^2\leq \| \psi_z\|^2_{L^{\infty}(K_z)} \left( \int_{K_z} \left \lvert w   \right \rvert \right)^2 \leq C^2_{\psi}  \meas{K_z}^{-2} \meas{K_z} \| w \|^2_{K_z} = C^2_{\psi}  \meas{K_z}^{-1}   \| w \|^2_{K_z}.  
\end{equation}

\medskip
We now proceed to prove the inequality \eqref{eq:quasi-interpolation_to_S_L2_stab}. Using that $0\leq \phi_z \leq 1$ gives
\begin{align*}
\left \lVert  I_{S_{\T}} w  \right \rVert ^2_{K} &=  \left \lVert  \sum_{ z \in \N_K } \left( \int_{K_z} w \psi_z  \right)  \phi_z  \right \rVert ^2_{K}  \leq \left \lvert \sum_{ z \in \N_K}  \int_{K_z} w \psi_z   \right \rvert ^2  \meas{K} \\
& \leq ( \#\N_K ) \meas{K}   \sum_{ z \in \N_K} \left \lvert \int_{K_z} w \psi_z   \right \rvert ^2.
\end{align*}
The inequality \eqref{eq:quasi} and the fact that $\#\N_K \leq d+1$ yields
\begin{align}
\nonumber 
\| I_{S_{\T}} w \|^2_{K} & \leq  ( d+1 ) \meas{K}  \sum_{ z \in \K_K} C^2_{\psi}  \meas{K_z}^{-1}   \| w \|^2_{K_z} \\ \label{eq:local_bound_L2_inter}
& \leq ( d+1 ) \meas{K} C^2_{\psi}   \max_{z \in \N_K} \meas{K_z}^{-1}  \| w \|^2_{\omega_K} \\
& \leq    ( d+1 ) C^2_{\psi}  \frac{\meas{K}}{\min_{z \in \N_K}  \meas{K_z}}   \| w \|^2_{\omega_K}.
\end{align}
Since $|K|$ and $|K_z|$, $z \in \N_K$, are comparable up to a constant depending on~$d$ and $\gamma_\T$ (in a shape-regular mesh, we can compare the size of any neighboring elements), inequality \eqref{eq:quasi-interpolation_to_S_L2_stab} follows.

To prove the inequalities \eqref{eq:quasi-interpolation_to_S_L2_approx} and \eqref{eq:quasi-interpolation_to_S_L2_approx_H2}, let $p$ be a constant or linear polynomial on $\omega_K$. Using the fact that $I_{S_{\T}}$ reproduces linear polynomials and \eqref{eq:quasi-interpolation_to_S_L2_stab} we get
\begin{align*}
\| w - I_{S_{\T}} w\|_{K} &=\| w-p - I_{S_{\T}} (w-p)\|_{K} \\
& \leq \| w-p \|_{K} + \widehat{C}_{I_{S_{\T}},1} \| w - p \|_{\omega_K} \\
&\leq (\widehat{C}_{I_{S_{\T}},1}+1) \| w - p \|_{\omega_K}.
\end{align*}
Using the Bramble--Hilbert lemma (\Cref{lemma:BH}) gives
\begin{equation*}
\| w - I_{S_{\T}}w\|_{K} \leq 
(\widehat{C}_{I_{S_{\T}},1}+1) C_{\BH}(\T)
h_K \| \nabla w \|_{\omega_K}
\end{equation*}
or
\begin{equation*}
\| w - I_{S_{\T}}w\|_{K} \leq 
(\widehat{C}_{I_{S_{\T}},1}+1) C_{\BH}(\T) 
h_K^2 |  w |_{H^2(\omega_K)}.
\end{equation*}

It remains to verify the inequality \eqref{eq:quasi-interpolation_to_S_H1_stab}.
Using the fact that $I_{S_{\T}}$ reproduces constants, we have, for arbitrary $c \in \R$, 
\begin{align*}
\| \nabla I_{S_{\T}}w\|^2_{K} & = \| \nabla I_{S_{\T}}(w-c)\|^2_{K}= \int_K \left \lvert \sum_{z\in \N_K}  \left( \int_{K_z} (w-c) \psi_z \right) \nabla \phi_z \right \rvert^2  \\
& \leq (\# \N_K )   \sum_{z\in \N_K}  \| \nabla \phi_z \|^2_{L^{\infty}(K)} \int_K   \left \lvert  \int_{K_z} (w-c) \psi_z   \right \rvert ^2 \\
& \leq (d+1)   \sum_{z\in \N_K}  \| \nabla \phi_z \|^2_{L^{\infty}(K)}   \left \lvert  \int_{K_z} (w-c) \psi_z   \right \rvert ^2 \meas{K}\\
& \leq  (d+1)  C^2_{\phi} \rho^{-2}_K \meas{K} \sum_{z\in \N_K} \left \lvert  \int_{K_z} (w-c) \psi_z   \right \rvert ^2 ,
\end{align*}
where we also used~\eqref{eq:phi_L_infty_bound}. Then, from~\eqref{eq:quasi}, we get
\begin{align*}
\| \nabla I_{S_{\T}} w\|^2_{K} & \leq  (d+1) C^2_{\phi} \rho^{-2}_K \meas{K}  C^2_{\psi} \max_{z \in \N_K} \meas{K_z}^{-1} \| w-c \|^2_{\omega_K}.
\end{align*}
Using the Bramble--Hilbert lemma (\Cref{lemma:BH}) and rearranging yields
\begin{align*}
\| \nabla I_{S_{\T}} w \|^2_{K} & \leq  
(d+1)  C^2_{\phi} C^2_{\psi} \big( C_{\BH}(\T) \big)^2
  \frac{\meas{K}}{\min_{z \in \K_K} \meas{K_z}}
\cdot
 \frac{h^2_K}{\rho^2_K}  \| \nabla w \|^2_{\omega_K}.
\end{align*}
\end{proof}

For the interpolation operator~$I_{V_{\T}}$, we can derive bounds analogous to those of \Cref{thm:quasi-interpolation_to_S_estimates}.
For the ``inner'' elements, i.e.,~the elements $K\in\T$ such that patch~${\omega}_K$ does not intersect with the boundary of $\Omega$, i.e., $\overline{\omega_K} \cap \partial \Omega = \emptyset$, the forms of the bounds and their proofs are analogous to \Cref{thm:quasi-interpolation_to_S_estimates}, because~$I_{V_{\T}}$ also reproduces constants on~${\omega}_K$. For the elements whose patch intersects with the boundary of $\Omega$, one cannot use this property and the Bramble--Hilbert lemma (\Cref{lemma:BH}) must be replaced by Friedrich's inequality (\Cref{lemma:Friedrichs}) in the proofs.

\begin{thm}\label{thm:quasi-interpolation_to_V_estimates}
There exist positive constants $\widehat{C}_{I_{V_{\T}},\ell}$, $\ell = 1,2,4$, depending only on $d$ and $\gamma_{\T}$ such that for all elements $K\in\T$, 
\begin{equation}
 \label{eq:quasi-interpolation_to_V_L2_stab}
 \| I_{V_{\T}} w \|_{K}  \leq \widehat{C}_{I_{V_{\T}},1} \| w \|_{\omega_K}, \quad \forall w \in L^2(\omega_K), \\
 \end{equation}
and for all $w \in H^1(\omega_K)$ if $\overline{\omega_K} \cap \partial \Omega = \emptyset$, or for all $w \in H^1(\omega_K)\cap H^1_0(\Omega)$ otherwise,
\begin{align}
\label{eq:quasi-interpolation_to_V_L2_approx_2}
\| w - I_{V_{\T}} w \|_{K} & \leq \widehat{C}_{I_{V_\T},2} h_K \| \nabla w \|_{\omega_K},  \\
\label{eq:quasi-interpolation_to_V_H1_stab_2}
\| \nabla I_{V_{\T}} w \|_{K} & \leq \widehat{C}_{I_{V_\T},4} \| \nabla w \|_{\omega_K}.
\end{align}
\end{thm}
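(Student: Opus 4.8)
The plan is to follow the proof of \Cref{thm:quasi-interpolation_to_S_estimates} line by line, treating separately the elements whose patch meets $\partial\Omega$ and, on those, replacing the reproduction of linear polynomials (hence the Bramble--Hilbert lemma \Cref{lemma:BH}) by Friedrich's inequality \Cref{lemma:Friedrichs}. I would first dispose of the $L^2$-stability \eqref{eq:quasi-interpolation_to_V_L2_stab}, which holds for every $K\in\T$ and needs no polynomial reproduction: on $K$ the operator $I_{V_\T}$ differs from $I_{S_\T}$ only in that the defining sum runs over the free nodes $\K_K$ instead of over all of $\N_K$, i.e.\ some nonnegative contributions are dropped. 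Hence the chain $\| I_{V_\T} w \|_K^2 \le (\#\K_K)\,|K|\sum_{z\in\K_K}|\int_{K_z} w\psi_z|^2$, together with $0\le\phi_z\le1$, $\#\K_K\le d+1$, the pointwise bound \eqref{eq:quasi}, and the comparability of $|K|$ with the $|K_z|$ (up to a constant depending only on $d$ and $\gamma_\T$), reproduces \eqref{eq:quasi-interpolation_to_S_L2_stab} verbatim and yields $\widehat{C}_{I_{V_\T},1}=\widehat{C}_{I_{V_\T},1}(d,\gamma_\T)$.

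For an ``inner'' element, $\overline{\omega_K}\cap\partial\Omega=\emptyset$, every node of $K$ is a free node, so $\K_K=\N_K$, each $K_z$ with $z\in\N_K$ lies in $\omega_K$, and $I_{V_\T}$ reproduces linear polynomials on $\omega_K$ exactly as $I_{S_\T}$ does (as noted above \Cref{thm:quasi-interpolation_to_V_estimates}). Consequently the proofs of \eqref{eq:quasi-interpolation_to_S_L2_approx} and \eqref{eq:quasi-interpolation_to_S_H1_stab} carry over unchanged to give \eqref{eq:quasi-interpolation_to_V_L2_approx_2} and \eqref{eq:quasi-interpolation_to_V_H1_stab_2} on such $K$: subtract an optimal constant, resp.\ linear, polynomial $p$, use the $L^2$-stability \eqref{eq:quasi-interpolation_to_V_L2_stab} for $w-p$, apply \Cref{lemma:BH}, and absorb $h_K/\rho_K\le\gamma_\T$.

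The genuinely new case is an element whose patch meets the boundary, $\overline{\omega_K}\cap\partial\Omega\neq\emptyset$, for $w\in H^1(\omega_K)\cap H^1_0(\Omega)$; here $I_{V_\T}$ need not reproduce constants. Instead of subtracting a polynomial I would compare with $0$: the triangle inequality and \eqref{eq:quasi-interpolation_to_V_L2_stab} give $\| w - I_{V_\T} w\|_K \le \|w\|_K + \|I_{V_\T} w\|_K \le (1+\widehat{C}_{I_{V_\T},1})\|w\|_{\omega_K}$, and retracing the computation of $\|\nabla I_{S_\T} w\|_K^2$ in the proof of \Cref{thm:quasi-interpolation_to_S_estimates} but keeping $w$ in place of $w-c$ gives $\|\nabla I_{V_\T} w\|_K^2 \le (d+1)\,C_\phi^2 C_\psi^2\,\rho_K^{-2}\,|K|\,\max_{z\in\N_K}|K_z|^{-1}\,\|w\|_{\omega_K}^2$. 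In both estimates the right-hand side is controlled by $\|w\|_{\omega_K}$; since $w$ vanishes on $\partial\omega_K\cap\partial\Omega$, Friedrich's inequality (\Cref{lemma:Friedrichs}) yields $\|w\|_{\omega_K}\le C_F(\omega_K)\,h_{\omega_K}\,\|\nabla w\|_{\omega_K}$, and the bounds $C_F(\omega_K)\le C_F(\T)$, $h_{\omega_K}\le C(d,\gamma_\T)\,h_K$, $h_K/\rho_K\le\gamma_\T$ and $|K|/\min_z|K_z|\le C(d,\gamma_\T)$ close the argument with constants depending only on $d$ and $\gamma_\T$ (hence on $d$ and $\gamma_0$ in the refined hierarchy, by the remarks in \Cref{sec:multilevel}).

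The step I expect to be the main obstacle is the clean application of Friedrich's inequality on $\omega_K$: \Cref{lemma:Friedrichs} requires the zero trace of $w$ to be taken on a subset of $\partial\omega_K$ of positive $(d-1)$-dimensional measure, whereas a priori a patch might touch $\partial\Omega$ only along a lower-dimensional set. I would handle this by arguing, from the matching shape-regular simplicial structure, that $\overline{\omega_K}\cap\partial\Omega\neq\emptyset$ forces $\partial\omega_K$ to contain a boundary face of positive measure near $K$; alternatively, one extends $w$ by zero onto a fixed-shape enlargement of $\omega_K$ contained in a ball of radius $\sim h_K$ and uses a Poincar\'e--Friedrich inequality there, which keeps the constant uniform in $\gamma_\T$. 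The rest is routine bookkeeping ensuring that each generic constant ($C_\psi$, $C_\phi$, $C_{\BH}$, $C_F$, and the mesh-comparability constants) depends solely on $d$ and $\gamma_\T$, which is already built into the auxiliary lemmas invoked above.
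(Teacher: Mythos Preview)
Your proposal is correct and follows essentially the same approach as the paper, which only sketches the proof by saying that the inner-element case is identical to \Cref{thm:quasi-interpolation_to_S_estimates} and that for boundary patches the Bramble--Hilbert lemma is to be replaced by Friedrich's inequality. You are in fact more thorough than the paper in flagging and handling the subtlety that Friedrich's inequality on $\omega_K$ requires the zero-trace set $\partial\omega_K\cap\partial\Omega$ to have positive $(d-1)$-measure; the paper does not address this explicitly.
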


For the local interpolation error over the faces, we have the following bound.
\begin{thm} \label{thm:quasi-interpolation-edge-estimates}
There exists a positive constant~$\widehat{C}_{I_{V_{\T}},5}$ depending only on $d$ and $\gamma_{\T}$ such that for all elements $K\in\T$,
\begin{equation}
\| w - I_{V_{\T}} w \|^2_{\partial K} \leq \widehat{C}_{I_{V_{\T}},5} h_K \| \nabla w \|^2_{\omega_K}. 
\end{equation}
\end{thm}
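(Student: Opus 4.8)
The plan is to combine the trace inequality (\Cref{lemma:trace_ineq}) with the local approximation and $H^1$-stability bounds for $I_{V_\T}$ collected in \Cref{thm:quasi-interpolation_to_V_estimates}. Fix an element $K\in\T$ and write $v := w - I_{V_\T} w$; since $w$ and $I_{V_\T} w$ both lie in $H^1(K)$, so does $v$, and the trace inequality applies. Applying \Cref{lemma:trace_ineq} to $v$ on $K$ gives
\[
\| w - I_{V_\T} w \|^2_{\partial K} \leq C_{\TR}(\T)\left( h_K^{-1} \| w - I_{V_\T} w \|^2_K + h_K \| \nabla(w - I_{V_\T} w) \|^2_K \right).
\]

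The two terms on the right are then handled by the already-established local bounds. For the first term I would use the $L^2$-approximation estimate \eqref{eq:quasi-interpolation_to_V_L2_approx_2}, namely $\| w - I_{V_\T} w \|_K \leq \widehat{C}_{I_{V_{\T}},2} h_K \| \nabla w \|_{\omega_K}$, so that $h_K^{-1}\| w - I_{V_\T} w \|^2_K \leq \widehat{C}_{I_{V_{\T}},2}^{2} h_K \| \nabla w \|^2_{\omega_K}$. For the second term I would split the gradient of the interpolation error by the triangle inequality, $\| \nabla(w - I_{V_\T} w) \|_K \leq \| \nabla w \|_K + \| \nabla I_{V_\T} w \|_K$, then bound $\| \nabla w \|_K \leq \| \nabla w \|_{\omega_K}$ (since $K\subset\omega_K$) and $\| \nabla I_{V_\T} w \|_K \leq \widehat{C}_{I_{V_{\T}},4} \| \nabla w \|_{\omega_K}$ via the stability bound \eqref{eq:quasi-interpolation_to_V_H1_stab_2}; squaring and using $(a+b)^2\le 2a^2+2b^2$ yields $\| \nabla(w - I_{V_\T} w) \|^2_K \leq 2(1 + \widehat{C}_{I_{V_{\T}},4}^{2})\| \nabla w \|^2_{\omega_K}$.

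Substituting both estimates into the trace bound gives
\[
\| w - I_{V_\T} w \|^2_{\partial K} \leq C_{\TR}(\T)\left( \widehat{C}_{I_{V_{\T}},2}^{2} + 2 + 2\widehat{C}_{I_{V_{\T}},4}^{2} \right) h_K \| \nabla w \|^2_{\omega_K},
\]
so the claim holds with $\widehat{C}_{I_{V_{\T}},5} = C_{\TR}(\T)\bigl( \widehat{C}_{I_{V_{\T}},2}^{2} + 2 + 2\widehat{C}_{I_{V_{\T}},4}^{2} \bigr)$, which depends only on $d$ and $\gamma_\T$ since each constant on the right does. The only point needing a little care — and it is not a genuine obstacle — is the admissible class of $w$: for elements $K$ whose patch $\omega_K$ meets $\partial\Omega$ one must require $w\in H^1(\omega_K)\cap H^1_0(\Omega)$ so that \eqref{eq:quasi-interpolation_to_V_L2_approx_2} and \eqref{eq:quasi-interpolation_to_V_H1_stab_2} are applicable, exactly as in \Cref{thm:quasi-interpolation_to_V_estimates}; with that hypothesis the argument is uniform over all $K\in\T$. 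The whole proof is thus a direct assembly of the trace inequality with the local interpolation bounds, the sole manipulation being the triangle-inequality split of $\nabla(w-I_{V_\T}w)$ before invoking stability.
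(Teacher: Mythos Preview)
Your proof is correct and follows essentially the same approach as the paper: apply the trace inequality, then control the two resulting terms via the local approximation bound \eqref{eq:quasi-interpolation_to_V_L2_approx_2} and the $H^1$-stability bound \eqref{eq:quasi-interpolation_to_V_H1_stab_2} after splitting $\nabla(w-I_{V_\T}w)$ with the triangle inequality. Your remark about the admissible class of $w$ near the boundary is a welcome clarification that the paper leaves implicit.
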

\begin{proof}
Using the trace inequality (\Cref{lemma:trace_ineq}) and the properties of $I_{V_{\T}}$ from \Cref{thm:quasi-interpolation_to_V_estimates} yields
\begin{align*}
\| w - I_{V_{\T}} w \|_{\partial K} 
& \leq C_{\TR}(\T)[ h^{-1}_{K} \| w - I_{V_{\T}} w \|^2_{K}  +  h_{K} \|\nabla( w - I_{V_{\T}} w ) \|^2_{K} ] \\
& \leq C_{\TR}(\T) \left[ h^{-1}_{K}  \| w - I_{V_{\T}} w \|^2_{K} + h_{K} \cdot 2 \cdot  \left( \|\nabla w \|^2_{K}  + \| \nabla I_{V_{\T}} w  \|^2_{K}  \right)  \right] \\
& \leq C_{\TR}(\T) \left[ h^{-1}_{K}  \left(\widehat{C}_{I_{V_{\T}},2} \right)^2 h^2_{K} \| \nabla w \|^2_{\omega_{K}} +  h_{K}\cdot 2\left( 1+\left( \widehat{C}_{I_{V_{\T}},4} \right)^2  \right)  \| \nabla w \|^2_{\omega_{K}} \right].
\end{align*}
\end{proof}

\subsection{Global estimates}
\label{sec:interpolation_globalestims}

We now state global variants of estimates for quasi-interpolants and interpolation errors.

For any $K\in \T$, let $\Covrlp(K)$ denote the number of patches this element is contained in, i.e.,
\begin{equation*}
    \Covrlp(K) = \# \left \lbrace K' \in \T ; K\subset \omega_{K'} \right \rbrace. 
\end{equation*}
The constant $\Covrlp(K)$ depends only on the geometry of the mesh~$\T$, i.e., $d$ and the shape regularity~$\gamma_\T$.

\begin{thm}\label{thm:quasi-interpolation-to-S-global-estimates}
There exist positive constants $C_{I_{S_{\T}},\ell}$, $\ell = 1,2,4$, depending only on $d$ and $\gamma_{\T}$ such that
\begin{align}
\| I_{S_{\T}} w \| & \leq C_{I_{S_{\T}},1} \| w \| && \forall w \in L^2(\Omega), \\
\left( \sum_{K\in \T}   h^{-2}_K  \|  w - I_{S_{\T}}w  \|^2_{K}  \right)^{\frac{1}{2}}
=\| h^{-1}_{\T} ( w - I_{S_{\T}} w ) \|
& \leq C_{I_{S_{\T}},2} \| \nabla w \| && \forall w \in H^1(\Omega), \\
\| \nabla (I_{S_{\T}} w )\| & \leq C_{I_{S_{\T}},4} \| \nabla w \| && \forall w \in H^1(\Omega).
\end{align}
\end{thm}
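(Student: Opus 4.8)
The plan is to deduce each of the three global estimates from the corresponding local (elementwise) estimate of \Cref{thm:quasi-interpolation_to_S_estimates} by squaring, summing over all $K\in\T$, and then absorbing the resulting sum of patch-norms into a fixed multiple of the global $L^2(\Omega)$-norm using the bounded overlap of the patches $\{\omega_K\}_{K\in\T}$. Concretely, set $N_\T:=\max_{K\in\T}\Covrlp(K)$, which depends only on $d$ and $\gamma_\T$ by the remark preceding the theorem. Since each patch $\omega_K$ is the union of the elements $K'\in\T$ with $K'\subset\omega_K$, the starting point is the overlap identity
\[
\sum_{K\in\T}\|g\|_{\omega_K}^2
=\sum_{K'\in\T}\Big(\#\{K\in\T:\ K'\subset\omega_K\}\Big)\,\|g\|_{K'}^2
=\sum_{K'\in\T}\Covrlp(K')\,\|g\|_{K'}^2
\le N_\T\,\|g\|^2,
\]
valid for every $g\in L^2(\Omega)$ (for the approximation estimate it will be applied with $g=\nabla w$).

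First I would prove the $L^2$-stability bound: writing $\|I_{S_\T}w\|^2=\sum_{K\in\T}\|I_{S_\T}w\|_K^2$, bounding each term by \eqref{eq:quasi-interpolation_to_S_L2_stab}, and applying the overlap identity with $g=w$, one obtains $\|I_{S_\T}w\|^2\le\widehat{C}_{I_{S_{\T}},1}^{\,2}\,N_\T\,\|w\|^2$, i.e.\ the claim with $C_{I_{S_{\T}},1}=\widehat{C}_{I_{S_{\T}},1}\,N_\T^{1/2}$. For the approximation bound I would use that $\|h_\T^{-1}(w-I_{S_\T}w)\|^2=\sum_{K\in\T}h_K^{-2}\|w-I_{S_\T}w\|_K^2$ (which is precisely the middle equality in the statement, written out elementwise), insert the local estimate \eqref{eq:quasi-interpolation_to_S_L2_approx} so that the factors $h_K^{-2}$ and $h_K^{2}$ cancel, and then apply the overlap identity with $g=\nabla w$, giving $C_{I_{S_{\T}},2}=\widehat{C}_{I_{S_{\T}},2}\,N_\T^{1/2}$. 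The $H^1$-stability bound follows in exactly the same manner from $\|\nabla(I_{S_\T}w)\|^2=\sum_{K\in\T}\|\nabla I_{S_\T}w\|_K^2$ and \eqref{eq:quasi-interpolation_to_S_H1_stab}, with $C_{I_{S_{\T}},4}=\widehat{C}_{I_{S_{\T}},4}\,N_\T^{1/2}$. Since the local constants $\widehat{C}_{I_{S_{\T}},\ell}$ and $N_\T$ depend only on $d$ and $\gamma_\T$, so do the three global constants.

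I do not expect a genuine obstacle here: this is a routine localization-to-globalization step. The only points that need care are the bookkeeping of the mesh-size weight in the second estimate — one must check that the factor $h_K^{-2}$ coming from the definition of $\|h_\T^{-1}\cdot\|$ cancels exactly against the $h_K^{2}$ produced by the local $L^2$-approximation estimate, which is the reason that estimate was stated with a single power of $h_K$ — and the observation that the overlap constant $N_\T$ is uniformly bounded in terms of $d$ and $\gamma_\T$ alone, which has already been recorded just before the theorem. In the mesh hierarchy of \Cref{sec:multilevel} this then yields the same estimates on every level $\T_j$ with constants depending only on $d$ and $\gamma_0$.
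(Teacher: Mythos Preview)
Your proposal is correct and matches the paper's own proof essentially line for line: the paper proves the first inequality by summing the local bound \eqref{eq:quasi-interpolation_to_S_L2_stab} over $K\in\T$ and absorbing $\sum_{K}\|w\|_{\omega_K}^2$ via $\max_{K\in\T}\Covrlp(K)$, then states that the remaining inequalities are analogous. Your write-up is in fact slightly more explicit than the paper's, in particular in spelling out the overlap identity and the cancellation of the $h_K$ factors in the second estimate.
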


\begin{proof}
Using \Cref{thm:quasi-interpolation_to_S_estimates},
\begin{align*}
\| I_{S_{\T}} w \|^2   = \sum_{K\in \T} \| I_{S_{\T}} w \|^2_K
&\leq   \sum_{K\in \T}  \left( \widehat{C}_{I_{S_{\T}},1}\right)^2 \| w\|^2_{\omega_K} 
\leq \left( \widehat{C}_{I_{S_{\T}},1}\right)^2 \sum_{K\in \T} \Covrlp (K) \| w\|^2_{K} \\
&\leq \left( \widehat{C}_{I_{S_{\T}},1}\right)^2 \max_{K \in \T} \Covrlp (K) \sum_{K \in \T} \| w\|^2_{K}.
\end{align*}
The proofs of the other three inequalities are analogous.
\end{proof}

\begin{thm}\label{thm:quasi-interpolation-to_V-global-estimates}
There exist positive constants $C_{I_{V_{\T}},\ell}$, $\ell = 1,2,4,5$, depending only on $d$ and the shape-regularity constant $\gamma_{\T}$ such that
\begin{align}
\| I_{V_{\T}} w \| & \leq C_{I_{V_{\T}},1} \| w \| && \forall w \in L^2(\Omega), \\
\label{eq:glob_interp_Vj_2}
\left( \sum_{K\in \T}   h^{-2}_K  \|  w - I_{V_{\T}}w  \|^2_{K}  \right)^{\frac{1}{2}}
=\| h^{-1}_\T ( w - I_{V_{\T}} w ) \|
& \leq C_{I_{V_{\T}},2} \| \nabla w \| && \forall w \in H^1_0(\Omega), \\
\label{eq:quasi-interpolation-to_V-global_grad}
\| \nabla (I_{V_{\T}} w )\| & \leq C_{I_{V_{\T}},4} \| \nabla w \| && \forall w \in H^1_0(\Omega), \\
\left( \sum_{K\in \T} h^{-1}_K \| w - I_{V_{\T}} w \|^2_{\partial K} \right)^{\frac{1}{2}} & \leq C_{I_{V_{\T}},5} \| \nabla w \| && \forall w \in H^1_0(\Omega).
\end{align}
\end{thm}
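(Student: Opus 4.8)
The plan is to derive each of the four global bounds from the corresponding elementwise estimate, summing the squared local inequalities over all $K\in\T$ and then converting the resulting sum of patch-norms into a sum of element-norms by the finite-overlap property of the patches; this is exactly the mechanism already used in the proof of \Cref{thm:quasi-interpolation-to-S-global-estimates}. The key observation is that, since the mesh is matching and each element $K'\in\T$ lies in the patch $\omega_K$ for at most $\max_{K\in\T}\Covrlp(K)$ indices $K$, for any scalar field $g\in L^2(\Omega)$ one has
\[
\sum_{K\in\T}\|g\|_{\omega_K}^2 \;=\; \sum_{K\in\T}\;\sum_{\substack{K'\in\T\\ K'\subset\omega_K}}\|g\|_{K'}^2 \;=\; \sum_{K'\in\T}\Covrlp(K')\,\|g\|_{K'}^2 \;\le\; \Big(\max_{K\in\T}\Covrlp(K)\Big)\,\|g\|^2,
\]
with $\max_{K\in\T}\Covrlp(K)$ depending only on $d$ and $\gamma_{\T}$.

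First I would prove the $L^2$-stability estimate: summing the square of \eqref{eq:quasi-interpolation_to_V_L2_stab} over $K$ gives $\|I_{V_{\T}}w\|^2=\sum_{K\in\T}\|I_{V_{\T}}w\|_K^2\le(\widehat C_{I_{V_{\T}},1})^2\sum_{K\in\T}\|w\|_{\omega_K}^2$, and the overlap estimate above with $g=w\in L^2(\Omega)$ yields the claim with $C_{I_{V_{\T}},1}=\widehat C_{I_{V_{\T}},1}\,(\max_K\Covrlp(K))^{1/2}$. The remaining three inequalities follow in the same way, now with $g=|\nabla w|$ and $w\in H^1_0(\Omega)$: for \eqref{eq:glob_interp_Vj_2} I multiply \eqref{eq:quasi-interpolation_to_V_L2_approx_2} by $h_K^{-1}$, square and sum to obtain $\sum_{K}h_K^{-2}\|w-I_{V_{\T}}w\|_K^2\le(\widehat C_{I_{V_{\T}},2})^2\sum_K\|\nabla w\|_{\omega_K}^2$; for \eqref{eq:quasi-interpolation-to_V-global_grad} I square \eqref{eq:quasi-interpolation_to_V_H1_stab_2} and sum; and for the last inequality I multiply the bound of \Cref{thm:quasi-interpolation-edge-estimates} by $h_K^{-1}$ and sum. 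In each case the global constant is $\widehat C_{I_{V_{\T}},\ell}$ times $(\max_K\Covrlp(K))^{1/2}$.

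The only point that needs attention — and the closest thing to an obstacle — is the split hypothesis in the local estimates of \Cref{thm:quasi-interpolation_to_V_estimates} and \Cref{thm:quasi-interpolation-edge-estimates}, which for boundary-touching patches ($\overline{\omega_K}\cap\partial\Omega\neq\emptyset$) require $w\in H^1(\omega_K)\cap H^1_0(\Omega)$ rather than merely $w\in H^1(\omega_K)$. Working globally with $w\in H^1_0(\Omega)$, as in the statement, makes both versions of the hypothesis simultaneously available on every element, so the elementwise bounds can be invoked uniformly and the summation goes through with no special treatment of the boundary layer. Since each $\widehat C_{I_{V_{\T}},\ell}$ and $\max_K\Covrlp(K)$ depend only on $d$ and the shape-regularity constant $\gamma_{\T}$, so do the resulting constants $C_{I_{V_{\T}},\ell}$, which completes the plan.
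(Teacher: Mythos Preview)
Your proposal is correct and follows exactly the approach the paper takes for the analogous \Cref{thm:quasi-interpolation-to-S-global-estimates}: sum the squared local estimates over all elements and control $\sum_K\|\cdot\|_{\omega_K}^2$ by $\max_K\Covrlp(K)\,\|\cdot\|^2$. Your observation that the global hypothesis $w\in H^1_0(\Omega)$ resolves the split local hypotheses of \Cref{thm:quasi-interpolation_to_V_estimates} uniformly across interior and boundary-touching patches is precisely the point that distinguishes this case from the $I_{S_\T}$ version, and the paper does not spell it out.
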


Let us now consider the mesh hierarchy as in \Cref{sec:multilevel}. Since the constants  $C_{I_{S_{j}},\ell}$ and $C_{I_{V_{j}},\ell}$ depend only on $d$ and $\gamma_j$, they can be bounded by constants $C_{I_{S},\ell}$ and $C_{I_{V},\ell}$ depending only on $d$ and the shape regularity $\gamma_0$ of the initial mesh $\T_0$.

Finally, we bound the difference of quasi-interpolates on two consecutive levels. 

\begin{thm}\label{lemma:difference-quasi-interpolation}
There exists a constant $C_{I,\mathrm{2lvl}}>0$ depending only on $d$ and $\gamma_{0}$ such that for all $j\geq1$ and all $w \in H^1_0(\Omega)$,
\begin{equation}
 \| h^{-1}_{j} ( I_{V_{j}} w - I_{V_{j-1}} w ) \| \leq  C_{I,\mathrm{2lvl}}\| \nabla w\|.
\end{equation}
\end{thm}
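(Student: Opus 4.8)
The plan is to reduce the claim to the global approximation property of the quasi-interpolation operators from \Cref{thm:quasi-interpolation-to_V-global-estimates} (inequality~\eqref{eq:glob_interp_Vj_2}), combined with the triangle inequality and the fact that the mesh-sizes on two consecutive levels differ only by a factor of two.

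First I would insert~$w$ and write
\[
  I_{V_j} w - I_{V_{j-1}} w = \bigl(I_{V_j} w - w\bigr) + \bigl(w - I_{V_{j-1}} w\bigr),
\]
so that the triangle inequality in the weighted $L^2(\Omega)$-norm gives
\[
  \| h^{-1}_j ( I_{V_{j}} w - I_{V_{j-1}} w ) \|
  \leq \| h^{-1}_j ( w - I_{V_{j}} w ) \| + \| h^{-1}_j ( w - I_{V_{j-1}} w ) \| .
\]
The first term on the right is exactly $\| h^{-1}_{\T_j}(w - I_{V_j} w)\|$, which \eqref{eq:glob_interp_Vj_2} bounds by $C_{I_{V_{j}},2}\|\nabla w\| \leq C_{I_{V},2}\|\nabla w\|$, with $C_{I_{V},2}$ depending only on~$d$ and~$\gamma_0$. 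For the second term I would use $h_j = 2^{-j}h_0$ from \Cref{sec:multilevel}, hence $h^{-1}_j = 2\,h^{-1}_{j-1}$ pointwise, and
\[
  \| h^{-1}_j ( w - I_{V_{j-1}} w ) \| = 2\,\| h^{-1}_{j-1} ( w - I_{V_{j-1}} w ) \| \leq 2\,C_{I_{V_{j-1}},2}\|\nabla w\| \leq 2\,C_{I_{V},2}\|\nabla w\| ,
\]
again by \eqref{eq:glob_interp_Vj_2}, now applied on level $j-1$; this is legitimate also for $j=1$, i.e.\ on the coarsest mesh, since $w \in H^1_0(\Omega)$.

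Adding the two bounds yields the statement with $C_{I,\mathrm{2lvl}} = 3\,C_{I_{V},2}$, which depends only on~$d$ and~$\gamma_0$. There is no substantial difficulty here; the only point requiring care is the bookkeeping of the mesh-size rescaling between levels $j$ and $j-1$, which produces the factor~$2$ in the second term and hence the final constant~$3$.
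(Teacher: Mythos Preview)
Your proof is correct and essentially identical to the paper's own argument: both insert~$w$, apply the triangle inequality, use $h_j^{-1} = 2\,h_{j-1}^{-1}$ to rescale the second term, and then invoke~\eqref{eq:glob_interp_Vj_2} on levels~$j$ and~$j-1$ to obtain $C_{I,\mathrm{2lvl}} = 3\,C_{I_V,2}$.
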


\begin{proof}
Using the fact that $h^{-1}_{j}= 2  h^{-1}_{j-1}$ and the estimate \eqref{eq:glob_interp_Vj_2} from \Cref{thm:quasi-interpolation-to_V-global-estimates},
\begin{align*}
\| h^{-1}_j (I_{V_{j}} w - I_{V_{j-1}} w)  \| & \leq  \|h^{-1}_j ( w - I_{V_{j}} w)  \|  +  \|h^{-1}_j ( w - I_{V_{j-1}} w ) \|   \\
& = \|h^{-1}_j ( w - I_{V_{j}} w)  \|  +  2\|h^{-1}_{j-1} ( w - I_{V_{j-1}} w ) \|   \\
&\leq  (C_{I_{V},2} + 2  C_{I_{V},2}) \| \nabla w \|.
\end{align*}
Taking $ C_{I,\mathrm{2lvl}}$ as $C_{I,\mathrm{2lvl}} = C_{I_{V},2} + 2  C_{I_{V},2}$ finishes the proof.
\end{proof}


\section{Stable splitting}
\label{sec:splitting}
This section presents several results on splitting (decomposing) a $H^1_0(\Omega)$-function or a piecewise polynomial function into a sum of piecewise polynomial functions. 
Let a sequence of uniformly refined meshes $\T_j$, $j=0,1, \ldots$ as in \Cref{sec:multilevel} be given.

\subsection{Splitting of $H^1_0(\Omega)$ into subspaces of piecewise linear functions}
\label{subsec:splitting_infinite_dim}
To make the text easier to follow we first state the main result of this section and subsequently provide auxiliary results and proofs. We will show that any function  $w  \in  H^1_0(\Omega)$ can be uniquely decomposed using the quasi-interpolation operators $I_{V_j}$, $j\in \mathbb{N}_0$, as
\begin{equation*}
    w = I_{V_{0}} w + \sum^{+ \infty }_{j=1} (I_{V_{j}} - I_{V_{j-1}}) w;
\end{equation*}
the convergence of the sum is understood in the space $H^1_0(\Omega)$ with the norm $\|\nabla \cdot \|$. This decomposition is stable, meaning that there exist positive constants $c_{S,I_V},C_{S,I_V}$ such that for all $w \in H^1_0(\Omega)$,
\begin{equation}\label{eq:I_V_decompositon_begining}
c_{S,I_V} \| \nabla w\|^2\leq 
\| \nabla I_{V_{0}} w  \|^2 + 
 \sum^{+ \infty}_{j=1}  \| h^{-1}_{j} ( I_{V_{j}} w - I_{V_{j-1}} w ) \| ^2 \leq C_{S,I_V} \| \nabla w\|^2.
\end{equation}
We will also show that the splitting of the space $H^1_0(\Omega)$
into subspaces $V_j$, $j\in \mathbb{N}_0$, is stable in the sense that there exist positive constants $c_S,C_{S}$ such that for all \mbox{$w \in H^1_0(\Omega)$},
\begin{equation}\label{eq:V_decompositon_begining}
c_{S} \| \nabla w \|^2 \leq 
\inf_{  w_j \in V_j ;\ w=\sum^{+\infty}_{j=0} w_j } 
\| \nabla  w_0 \|^2  +  \sum^{ + \infty}_{j=1}   \| h^{-1}_{j} w_j \| ^2 \leq C_{S} \| \nabla w \|^2;
\end{equation}
the infimum is taken over all $(H^1_0(\Omega), \|\nabla \cdot \|) $-convergent decompositions.

We will show that the stability constants $c_{S,I_V},C_{S,I_V}$ and $c_S,C_{S}$ depend \emph{only} on $d$ and the shape regularity $\gamma_0$ of the initial mesh. In particular, the constants do not depend on the quasi-uniformity of the initial mesh or the ratio $h_{\Omega} / \min_{K\in \T_0}  h_K$. This result is important when considering settings where the problem associated with the coarsest level is difficult to solve and in practice can only be solved approximately.

Variants of these results can be found, e.g., in \cite{Oswald1994a,Ruede1993,Dahmen1992,Dahmen1997, Bornemann1993} and references therein. Our form is, however, to the best of our knowledge, not presented in the literature.
The results in \cite{Oswald1994a,Ruede1993,Dahmen1992,Dahmen1997} are derived under the assumption that the initial mesh is quasi-uniform, and the authors do not track the dependence of the constants on $h_{\Omega} / \min_{K\in \T_0}  h_K$.
The results of \cite{Bornemann1993} are derived without the assumption on the quasi-uniformity of the initial mesh. The authors however consider only the splitting of piecewise linear functions.

We combine the approaches from \cite{Oswald1994a} and \cite{Bornemann1993}. We first focus on  showing the upper bound from \eqref{eq:I_V_decompositon_begining}, then continue with the lower bound, and later generalize it to show \eqref{eq:V_decompositon_begining}.

First, consider the $K$-functional in analogy to \cite[Section~7, eq.~(7.4)]{Bornemann1993}. For $\omega \subset \R^d$, $w \in L^2(\omega)$, it is defined as 
\begin{equation}
K(t,w,\omega) = \inf_{g\in H^2(\omega)} \left\lbrace \| w-g \|^2_{L^2(\omega)} + t^2 |g|^2_{H^2(\omega)}  \right\rbrace^{\frac{1}{2}}, \quad t>0.
\end{equation}
\begin{lemma}
\label{lemma:K_functionals_Rd}
There exists a constant $C>0$ such that for all $w\in H^1(\mathbb{R}^d)$ that have compact support in $\R^d$, it holds that 
\begin{equation}
\sum^{ + \infty}_{j=0} 2^{2j} K(2^{-2j},w,\mathbb{R}^d)^2 \leq C \| \nabla w\|^2_{L^2(\mathbb{R}^d)}.
\end{equation}
\end{lemma}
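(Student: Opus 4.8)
The plan is to evaluate $K(t,w,\mathbb{R}^d)$ exactly by a Fourier transform and then to reduce the statement to an elementary estimate for a scalar series. Normalize the Fourier transform so that Plancherel's identity reads $\|v\|_{L^2(\mathbb{R}^d)}=\|\widehat v\|_{L^2(\mathbb{R}^d)}$ and $\widehat{\partial_k v}(\xi)=i\xi_k\widehat v(\xi)$; then $\|\nabla w\|^2=\int_{\mathbb{R}^d}|\xi|^2|\widehat w(\xi)|^2\,d\xi$ and $|g|_{H^2(\mathbb{R}^d)}^2=\int_{\mathbb{R}^d}m(\xi)\,|\widehat g(\xi)|^2\,d\xi$ with $m(\xi)=\sum_{|\alpha|=2}\xi^{2\alpha}$, and a one-line expansion gives $m(\xi)\le|\xi|^4$. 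For each fixed $\xi$, minimizing the integrand $|\widehat w(\xi)-b|^2+t^2m(\xi)|b|^2$ over $b\in\mathbb{C}$ is a scalar least-squares problem solved by $b=\widehat w(\xi)/(1+t^2m(\xi))$, with minimal value $\tfrac{t^2m(\xi)}{1+t^2m(\xi)}|\widehat w(\xi)|^2$. Since the function $g_\ast$ with $\widehat{g_\ast}=\widehat w/(1+t^2m)$ lies in $H^2(\mathbb{R}^d)$ (one checks $\int(1+|\xi|^4)|\widehat{g_\ast}|^2<\infty$ using $m(\xi)\gtrsim|\xi|^4$), evaluating the infimum defining $K$ at $g_\ast$ and using that $\mu\mapsto\mu/(1+\mu)$ is increasing gives
\[
K(t,w,\mathbb{R}^d)^2\ \le\ \int_{\mathbb{R}^d}\frac{t^2m(\xi)}{1+t^2m(\xi)}\,|\widehat w(\xi)|^2\,d\xi\ \le\ \int_{\mathbb{R}^d}\frac{t^2|\xi|^4}{1+t^2|\xi|^4}\,|\widehat w(\xi)|^2\,d\xi .
\]

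Next, I would set $t=2^{-2j}$, multiply by $2^{2j}$, sum over $j\ge0$, and interchange the (nonnegative) sum and the integral to obtain
\[
\sum_{j=0}^{+\infty}2^{2j}K(2^{-2j},w,\mathbb{R}^d)^2\ \le\ \int_{\mathbb{R}^d}\Bigl(\sum_{j=0}^{+\infty}\frac{2^{-2j}|\xi|^4}{1+2^{-4j}|\xi|^4}\Bigr)|\widehat w(\xi)|^2\,d\xi .
\]
Thus it suffices to prove the pointwise multiplier bound $\sum_{j\ge0}\tfrac{2^{-2j}r^2}{1+2^{-4j}r^2}\le C\,r$ for all $r=|\xi|^2\ge0$ with an absolute constant $C$. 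I would prove this by splitting the series at $2^{2j}\approx r$: for the indices with $2^{-2j}r\le1$ each term is at most $2^{-2j}r^2$, and the geometric tail beginning at the smallest such index is $\le\tfrac43 r$; for the indices with $2^{-2j}r>1$ each term is at most $2^{2j}$, and the geometric sum of those terms is again $\le\tfrac43 r$. This gives $C=\tfrac83$, and combined with the previous display and $\|\nabla w\|^2=\int|\xi|^2|\widehat w|^2$ it finishes the proof.

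The only non-routine step is this dyadic splitting of the scalar series; the rest is the pointwise least-squares computation and Plancherel's theorem, so I expect no genuine obstacle. I note that one could alternatively recognize $\bigl(\sum_j 2^{2j}K(2^{-2j},w,\mathbb{R}^d)^2\bigr)^{1/2}$ as being equivalent to the seminorm of $w$ in the real-interpolation space obtained between $L^2(\mathbb{R}^d)$ and $\dot H^2(\mathbb{R}^d)$, namely $\dot H^1(\mathbb{R}^d)$, with the passage from the defining integral to the dyadic sum being standard; the Fourier argument above is preferred here because it is self-contained and, as a by-product, shows that the stated inequality is in fact a two-sided equivalence.
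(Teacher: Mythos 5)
Your proof is correct and follows essentially the same route as the paper: compute $K(t,w,\mathbb{R}^d)$ on the Fourier side via a pointwise least-squares minimization, then sum a dyadic scalar multiplier series bounded by $C|\xi|^2$. The only cosmetic differences are that you verify $g_\ast\in H^2(\mathbb{R}^d)$ directly on the Fourier side (rather than via Young's inequality for the convolution, as the paper does, which is why the paper invokes compact support at all) and you carry out the dyadic-splitting estimate of the scalar series explicitly, whereas the paper at that point simply cites \cite[Lemma~7.3]{Bornemann1993}.
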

\begin{proof}
A brief proof for $d=2$ is given in~\cite[Lemma~7.3]{Bornemann1993}. We present its key part in more detail and for $d=2,3$.
We will show that the $K$-functional can be expressed in terms of the Fourier transform (here denoted by $F[\cdot]$) as
\begin{equation}
\label{eq:Kfunceq}
K(t,w,\R^d)^2 = \frac{1}{(2\pi)^{\frac{d}{2}}}\int_{\R^d} \frac{t^2 |\xi|^4}{1+t^2 |\xi|^4} \big|F[w](\xi)\big|^2 d\xi.
\end{equation}

For $w\in H^1(\R^d)$, $g \in H^2(\R^d)$, using the properties of the Fourier transform,
\begin{multline}
\label{eq:Kfuncfourier}
\| w - g \|^2_{L^2(\R^d)} + t^2 |g|^2_{H^2(\R^d)}  \\
=\frac{1}{(2\pi)^{\frac{d}{2}}}\int_{\R^d} \frac{t^2 |\xi|^4}{1+t^2 |\xi|^4} \big|F[w](\xi)\big|^2 + (1+t^2 |\xi|^4) \left( F[g](\xi) - \frac{F[w](\xi)}{1+t^2|\xi|^4} \right)^2 d\xi. 
\end{multline}
By simple manipulations, one can show that the minimum is attained for
\begin{equation}
    \widetilde{g}(x) = w(x) \ast F^{-1}\left[ \frac{1}{1+t^2 |\xi|^4} \right](x),
\end{equation}
and it remains to show that $\widetilde{g} \in H^2(\R^d)$.
First, note that
\[
    \int_{\R^d} (1 + |\xi|)^2 \big( \frac{1}{1+t^2 |\xi|^4} \big)^2 < \infty,
\]
and therefore, due to the characterization of Sobolev spaces using Fourier transformations (see, e.g., \cite[Section~3.1.1]{Oswald1994a}), $F^{-1}[ (1+t^2 |\xi|^4)^{-1} ](x) \in H^2(\R^d)$. Then use Young's inequality for convolution (recall that by assumption, $w$ is compactly supported and therefore $w \in L^1(\R^d)$) and the fact that $\partial / \partial \xi_i (f * h) = ( \partial f/ \partial \xi_i * h)$ to show that the $H^2$-norm of $\widetilde{g}$ is bounded.

The equality~\eqref{eq:Kfunceq} then follows by plugging in the expression for~$\widetilde{g}$ into~\eqref{eq:Kfuncfourier} and performing algebraic manipulations. The rest of the proof of the lemma follows as in~\cite[Lemma~7.3]{Bornemann1993}. 
\end{proof}
\begin{lemma}
\label{lemma:K_functionals}
Let $\omega\subset \mathbb{R}^d $ be a domain with a Lipschitz-continuous boundary. There exists a constant $C_{\alpha}(\omega)>0$ depending on the shape of $\omega$ such that for all $w\in H^1(\omega)$, 
\begin{equation}
\sum^{ + \infty}_{j=0} 2^{2j} K(2^{-2j},w,\omega)^2 \leq C_{\alpha}(\omega) \| \nabla w\|^2_{L^2(\omega)}.
\end{equation}
\end{lemma}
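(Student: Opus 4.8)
The plan is to reduce the estimate on a general Lipschitz domain $\omega$ to the estimate on all of $\R^d$ from \Cref{lemma:K_functionals_Rd} by means of a Stein-type extension operator. First I would invoke the existence of a bounded linear extension operator $E: H^1(\omega) \to H^1(\R^d)$ for the Lipschitz domain $\omega$ (see, e.g., Stein's extension theorem), together with a cutoff so that $Ew$ has compact support; the operator norm of this modified extension depends only on the shape of $\omega$, and this is where the constant $C_\alpha(\omega)$ enters. Thus for every $w \in H^1(\omega)$ we have an $\widetilde{w} = Ew \in H^1(\R^d)$ with compact support, $\widetilde{w}|_\omega = w$, and $\|\widetilde{w}\|_{H^1(\R^d)} \leq C_E(\omega) \|w\|_{H^1(\omega)}$.

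Next I would compare the $K$-functionals. Since the infimum defining $K(t,w,\omega)$ may be taken over restrictions $g|_\omega$ with $g \in H^2(\R^d)$ (these are dense among admissible competitors, or one simply uses that any $g \in H^2(\omega)$ can itself be extended), one gets the monotonicity-type inequality
\begin{equation*}
K(t,w,\omega) \leq K(t,\widetilde{w},\R^d),
\end{equation*}
because restricting to $\omega$ only decreases both the $L^2$-discrepancy term and the $H^2$-seminorm term. Summing over $j$ with $t = 2^{-2j}$ and applying \Cref{lemma:K_functionals_Rd} to $\widetilde{w}$ yields
\begin{equation*}
\sum_{j=0}^{+\infty} 2^{2j} K(2^{-2j},w,\omega)^2 \leq \sum_{j=0}^{+\infty} 2^{2j} K(2^{-2j},\widetilde{w},\R^d)^2 \leq C \|\nabla \widetilde{w}\|_{L^2(\R^d)}^2 \leq C\, C_E(\omega)^2 \|w\|_{H^1(\omega)}^2.
\end{equation*}
This already gives the result with $\|w\|_{H^1(\omega)}$ on the right-hand side rather than the seminorm $\|\nabla w\|_{L^2(\omega)}$.

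To sharpen the bound to the gradient seminorm, I would use the fact that $K(t,w,\omega)$ is unchanged when a constant is added to $w$: indeed $K(t,w-c,\omega) = K(t,w,\omega)$ since constants lie in the kernel of $|\cdot|_{H^2}$ and can be absorbed into the competitor $g$. Hence one may replace $w$ by $w - \bar{w}$, where $\bar{w}$ is the mean of $w$ over $\omega$ (or over a suitable ball), apply the estimate just derived to $w - \bar{w}$, and then invoke the Poincaré inequality $\|w - \bar{w}\|_{H^1(\omega)} \leq C_P(\omega) \|\nabla w\|_{L^2(\omega)}$ for the Lipschitz domain $\omega$. Collecting the constants $C$, $C_E(\omega)$, $C_P(\omega)$ into a single $C_\alpha(\omega)$ depending only on the shape of $\omega$ completes the proof.

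The main obstacle is the first step: producing a compactly supported $H^1(\R^d)$-extension whose norm is controlled purely by the shape of $\omega$, and being careful that the cutoff used to enforce compact support does not spoil the $H^1$ bound — one multiplies $Ew$ by a smooth function equal to $1$ on a neighbourhood of $\overline{\omega}$, which introduces a term involving $\|Ew\|_{L^2}$ times the cutoff gradient, again controllable by the shape of $\omega$. The remaining ingredients (restriction decreasing the $K$-functional, translation invariance of $K$ under constants, Poincaré on $\omega$) are standard and require only routine verification.
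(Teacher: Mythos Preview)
Your proposal is correct and follows essentially the same approach as the paper: the paper's proof simply cites \cite[Lemma~7.4]{Bornemann1993} and states that it is based on an extension operator together with \Cref{lemma:K_functionals_Rd}, which is exactly the reduction you carry out. Your write-up in fact supplies the details (cutoff for compact support, monotonicity of the $K$-functional under restriction, subtraction of a constant plus Poincar\'e to pass from the full $H^1$-norm to the seminorm) that the paper leaves to the cited reference.
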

\begin{proof}
The proof for $d=2$ is given in \cite[Lemma~7.4]{Bornemann1993}. It is based on the use of an extension operator and \Cref{lemma:K_functionals_Rd}. For the three-dimensional case, the proof is analogous as \Cref{lemma:K_functionals_Rd} is valid also for $d=3$.
\end{proof}

\begin{lemma}\label{lemma:stables_splitting_quasi_to_S}
There exists a constant $C_{\beta}>0$ depending only on $d$ and $\gamma_{0}$ such that for all $K\in \T_0$ and all $w\in H^1_0(\Omega)$,
\begin{equation}
h^{-2}_K \sum^{+ \infty}_{j=0} 2^{2j}  \|  w - I_{S_{j}}w\|^2_K \leq C_{\beta} \| \nabla w \|^2_{\omega_{K}}.
\end{equation}
\end{lemma}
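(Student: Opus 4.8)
The plan is to estimate $\|w - I_{S_j} w\|_K$ in terms of the $K$-functional on the patch $\omega_K$ and then invoke \Cref{lemma:K_functionals}. First I would fix $K \in \T_0$ and an arbitrary function $w \in H^1_0(\Omega)$, restrict attention to the patch $\omega_K$, and work level-by-level. Since $\T_j$ is the $j$-th uniform dyadic refinement of $\T_0$, the mesh size on $\omega_K$ at level $j$ is $h_{K,j} = 2^{-j} h_K$ (up to the shape-regularity-controlled variation among elements in the patch, which contributes only constants depending on $d$ and $\gamma_0$). For a single element $K'$ of $\T_j$ contained in $\omega_K$, I would use the two approximation estimates from \Cref{thm:quasi-interpolation_to_S_estimates}: the $H^1$-type bound \eqref{eq:quasi-interpolation_to_S_L2_approx} and the $H^2$-type bound \eqref{eq:quasi-interpolation_to_S_L2_approx_H2}. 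For any $g \in H^2(\omega_{K'}^{(j)})$ (the patch of $K'$ in $\T_j$), write $w - I_{S_j} w = (w - g) - I_{S_j}(w-g) + (g - I_{S_j} g)$ and bound
\[
\| w - I_{S_j} w \|_{K'} \leq \| w - g \|_{K'} + \| I_{S_j}(w-g) \|_{K'} + \| g - I_{S_j} g \|_{K'} \leq C\big( \| w - g \|_{\omega_{K'}^{(j)}} + 2^{-2j} h_K^2 \, |g|_{H^2(\omega_{K'}^{(j)})} \big),
\]
using \eqref{eq:quasi-interpolation_to_S_L2_stab} and \eqref{eq:quasi-interpolation_to_S_L2_approx_H2}, with $C$ depending only on $d$ and $\gamma_0$.

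Next I would sum over all $K' \in \T_j$ with $K' \subset K$ (equivalently, sum the local squared estimates over $\omega_K$), using finite overlap of the level-$j$ patches $\omega_{K'}^{(j)}$ inside $\omega_K$ (again controlled by $d$, $\gamma_0$), to obtain
\[
\| w - I_{S_j} w \|_K^2 \leq C \big( \| w - g \|_{\omega_K}^2 + 2^{-4j} h_K^4 \, |g|_{H^2(\omega_K)}^2 \big)
\]
for every $g \in H^2(\omega_K)$. Taking the infimum over $g$ and recognizing the definition of the $K$-functional with $t = 2^{-2j} h_K^2$ gives
\[
\| w - I_{S_j} w \|_K^2 \leq C \, K(2^{-2j} h_K^2, w, \omega_K)^2 .
\]
Multiplying by $h_K^{-2} 2^{2j}$ and summing over $j \geq 0$, I would then rescale: writing $2^{-2j} h_K^2 = 2^{-2j'}$ with $2^{-2j'} := 2^{-2j} h_K^2$ is not an integer shift in general, so instead I would use the standard monotonicity/quasi-norm property of $K$-functionals, $K(t,w,\omega_K) \le \max\{1, (t/s)\} K(s,w,\omega_K)$ together with $h_K \le h_{\Omega}$ bounded (or simply compare the geometric series $\sum_j 2^{2j} K(2^{-2j} h_K^2,\cdot)^2$ with $\sum_j 2^{2j} K(2^{-2j},\cdot)^2$ up to a factor depending on $h_K$, hence on the fixed mesh $\T_0$, hence ultimately on $d,\gamma_0$ since $\T_0$'s geometry is what enters). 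Then \Cref{lemma:K_functionals} applied on $\omega_K$, whose shape constant $C_\alpha(\omega_K)$ is controlled by $d$ and $\gamma_0$, yields $\sum_j 2^{2j} K(2^{-2j},w,\omega_K)^2 \le C_\alpha(\omega_K) \|\nabla w\|_{\omega_K}^2$, completing the bound with $C_\beta$ depending only on $d$ and $\gamma_0$.

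The main obstacle I anticipate is the bookkeeping around the scaling factor $h_K$ inside the $K$-functional argument and making sure the constant $C_\beta$ genuinely depends only on $d$ and $\gamma_0$ and \emph{not} on $h_K$ itself or on the quasi-uniformity of $\T_0$ — this is the whole point of the lemma. The clean way is to note that the $K$-functional sum $\sum_{j\ge 0} 2^{2j} K(2^{-2j} h_K^2, w, \omega_K)^2$ is, after the substitution $s = 2^{-2j} h_K^2$, a Riemann-sum-like object whose value is equivalent (with constants depending only on how $h_K$ relates to powers of $2$, which in turn is absorbed by comparing $\omega_K$ to a reference patch scaled by $h_K$ — a similarity transformation under which both $K(t h_K^2, w, h_K \hat\omega)$ and $h_K^{-2}\|\nabla w\|^2$ scale compatibly) to $\|\nabla w\|_{\omega_K}^2$. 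Carrying out this scaling argument carefully, and verifying that \Cref{lemma:K_functionals_Rd} and \Cref{lemma:K_functionals} are scale-covariant in the required sense, is the technical heart; everything else is a routine combination of the already-established local estimates for $I_{S_j}$ and finite-overlap summation.
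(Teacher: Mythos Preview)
Your approach is essentially the same as the paper's: bound $\|w-I_{S_j}w\|_K$ via the local estimates \eqref{eq:quasi-interpolation_to_S_L2_stab} and \eqref{eq:quasi-interpolation_to_S_L2_approx_H2}, sum over the fine elements inside $K$, recognize the $K$-functional, and finish with \Cref{lemma:K_functionals}. The only organizational difference is that the paper performs the similarity rescaling $x=h_K\widetilde{x}$ \emph{at the outset}, so that the scaled element $\widetilde{K}$ has diameter~$1$ and the $K$-functional parameter is directly $2^{-2j}$; \Cref{lemma:K_functionals} is then applied on the unit-size patch $\omega_{\widetilde{K}}$, whose shape constant $C_\alpha(\omega_{\widetilde{K}})$ depends only on $d$ and $\gamma_0$, and the $h_K$ factors reappear cleanly when scaling back.

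One remark is worth flagging. Your first attempt to handle the factor $h_K^2$ inside $K(2^{-2j}h_K^2,w,\omega_K)$ via monotonicity leaves a residual factor depending on $h_K$, and the sentence ``hence on the fixed mesh $\T_0$, hence ultimately on $d,\gamma_0$'' is not correct: a dependence on $h_K$ (or on $h_\Omega/\min_{K\in\T_0}h_K$) is precisely what the lemma must avoid, and it is \emph{not} controlled by shape regularity alone. Your second paragraph then lands on the right resolution --- the similarity rescaling under which $K(th_K^2,w,\omega_K)^2=h_K^d\,K(t,\widetilde{w},\omega_{\widetilde{K}})^2$ and $\|\nabla w\|_{\omega_K}^2=h_K^{d-2}\|\nabla\widetilde{w}\|_{\omega_{\widetilde{K}}}^2$, so the $h_K$ factors cancel exactly --- which is exactly what the paper does, only earlier. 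Doing the scaling first would make the argument shorter and remove the need for the dead-end monotonicity detour.
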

\begin{proof}
The steps in the proof are inspired by the development in \cite[Section~2.3]{Oswald1994a} and \cite[Section~7]{Bornemann1993}.

We will use a scaling argument to consider an element $\widetilde{K}$ with $h_{\widetilde{K}}=1$. This is done by a transformation $x = h_K \widetilde{x}$, where $x\in K$, $\widetilde{x}\in \widetilde{K}$.
We denote $\widetilde{f}(\widetilde{x}):= f(x)$ for any function~$f$ defined on~$\omega_K$. Then
\begin{equation}\label{eq:lemma:stables_splitting_quasi_to_S:scaling}
\|  w - I_{S_{j}}w\|^2_K = h^d_K  \|  \widetilde{w} - \widetilde{I_{S_{j}}w}\|^2_{\widetilde{K}}.
\end{equation}
From the definition of the interpolation operator one can write $\widetilde{I_{S_{j}}w} = \widetilde{I_{S_{j}}} \widetilde{w}$. In words, one can either consider the transformation of the interpolant~$I_{S_{j}}w$ or transform the function~$w$ to the element~$\widetilde{K}$ first and then consider the quasi-interpolation~$\widetilde{I_{S_{j}}}$ associated with the transformed mesh.

We will show that there exists a constant $C_{\delta}>0$ depending only on $d$ and $\gamma_0$ such that
\begin{equation}\label{eq:lemma:stables_splitting_quasi_to_S:scaling_K-fun}
\|  \widetilde{w} - \widetilde{I_{S_{j}}}\widetilde{w}\|^2_{\widetilde{K}}
\leq  C_{\delta}  \cdot \big( K(2^{-2j},\widetilde{w},\omega_{\widetilde{K}}) \big)^2.
\end{equation}
Let $\widetilde{g}\in H^2(\omega_{\widetilde{K}})$. Then
\begin{equation} \label{eq:lemma:stables_splitting_quasi_to_S:local_ineq_0}
\|  \widetilde{w} - \widetilde{I_{S_{j}}} \widetilde{w}\|_{\widetilde{K}}
\leq \| \widetilde{w} - \widetilde{g} \|_{\widetilde{K}}  + \| \widetilde{g} - \widetilde{I_{S_{j}}} \widetilde{g} \|_{\widetilde{K}} + \|   \widetilde{I_{S_{j}}} ( \widetilde{g} - \widetilde{w}) \|_{\widetilde{K}}.
\end{equation}
Let $\widetilde{K}_j \in \widetilde{\T}_j$ such that $\widetilde{K}_j\subset \widetilde{K}$. Then (thanks to the uniform refinement and $h_{\widetilde{K}} = 1$, $h_{\widetilde{K}_j}=2^{-j}$) from \Cref{thm:quasi-interpolation_to_S_estimates} (inequalities \eqref{eq:quasi-interpolation_to_S_L2_stab} and \eqref{eq:quasi-interpolation_to_S_L2_approx_H2}),
\begin{align}\label{eq:lemma:stables_splitting_quasi_to_S:local_eq_1}
\| \widetilde{I_{S_{j}}} (\widetilde{g} - \widetilde{w}) \|_{\widetilde{K}_j} &\leq \widehat{C}_{\widetilde{I_{S_j}},1}   \| \widetilde{g} - \widetilde{w} \|_{\omega_{\widetilde{K}_j}}, \\ \label{eq:lemma:stables_splitting_quasi_to_S:local_eq_2}
\| \widetilde{g} - \widetilde{I_{S_{j}}} \widetilde{g} \|_{\widetilde{K}_j} &\leq \widehat{C}_{\widetilde{I_{S_j}},3} 2^{-2j} | \widetilde{g}|_{H^2(\omega_{\widetilde{K}_j})}.
\end{align}
Define
\begin{equation*}
    U(\widetilde{K},j) = \left\lbrace \cup \omega_{\widetilde{K}_j} ; \widetilde{K}_j \in \widetilde{\T}_j, \widetilde{K}_j \subset \widetilde{K}  \right \rbrace.
\end{equation*}
The term on the right hand side of \eqref{eq:lemma:stables_splitting_quasi_to_S:local_eq_1} can be bounded as
\begin{align} \nonumber
\| \widetilde{I_{S_{j}}} (\widetilde{g} - \widetilde{w}) \|_{\widetilde{K}} 
&= \sum_{\widetilde{K}_j \in \widetilde{\T}_j, \widetilde{K}_j \subset \widetilde{K} } \| \widetilde{I_{S_{j}}} (\widetilde{g} - \widetilde{w}) \|_{\widetilde{K}_j} \\ 
\nonumber
& \leq \sum_{\widetilde{K}_j \in \widetilde{\T}_j, \widetilde{K}_j \subset \widetilde{K}}
\widehat{C}_{\widetilde{I_{S_j}},1}  \| \widetilde{g} - \widetilde{w} \|_{\omega_{\widetilde{K}_j}} \\ 
\nonumber
&\leq  \widehat{C}_{\widetilde{I_{S_j}},1} \max_{\widetilde{K}_j \in \widetilde{\T}_j; \widetilde{K}_j \in U(\widetilde{K},j) } \Covrlp (\widetilde{K}_j)    \| \widetilde{g} - \widetilde{w} \|_{ U(\widetilde{K},j) } \\
\nonumber
& \leq \widehat{C}_{\widetilde{I_{S_j}},1} \max_{\widetilde{K}_j \in \widetilde{\T}_j; \widetilde{K}_j \in U(\widetilde{K},j) }  \Covrlp (\widetilde{K}_j) \| \widetilde{g} - \widetilde{w} \|_{\omega_{\widetilde{K}}} \\
\label{eq:lemma:stables_splitting_quasi_to_S:local_eq_3}
& \leq C_{I_{S},1} \| \widetilde{g} - \widetilde{w} \|_{\omega_{\widetilde{K}}},
\end{align} 
where the last inequality follows from the fact that $\widehat{C}_{\widetilde{I_{S_j}},1} = \widehat{C}_{{I_{S_j}},1}$ (scaling does not change the geometry and shape regularity) and from the definition of~$C_{I_{S},1}$. The term on the right hand side of \eqref{eq:lemma:stables_splitting_quasi_to_S:local_eq_2} can be bounded as
\begin{align}
\nonumber
\| \widetilde{g} - \widetilde{I_{S_{j}}} \widetilde{g} \|^2_{\widetilde{K}} 
&= \sum_{\widetilde{K}_j \in \widetilde{\T}_j, \widetilde{K}_j \subset \widetilde{K} }
 \| \widetilde{g} - \widetilde{I_{S_{j}}} \widetilde{g} \|^2_{\widetilde{K}_j}  \\
\nonumber
& \leq \sum_{\widetilde{K}_j \in \widetilde{\T}_j, \widetilde{K}_j \subset \widetilde{K} }  \big( \widehat{C}_{\widetilde{I_{S_j}},3} 2^{-2j} | \widetilde{g}|_{\omega_{\widetilde{K}_j} } \big)^2 \\
\nonumber
& \leq   \big( \widehat{C}_{\widetilde{I_{S_j}},3} \big)^2  \max_{\widetilde{K}_j \in \widetilde{\T}_j, \widetilde{K}_j \in U(\widetilde{K},j)} \Covrlp (\widetilde{K}_j) 
 2^{-4j} | \widetilde{g}|^2_{H^2( U(\widetilde{K},j))} \\
\nonumber
&\leq  \big( \widehat{C}_{\widetilde{I_{S_j}},3} \big)^2 \max_{\widetilde{K}_j \in \widetilde{\T}_j, \widetilde{K}_j \subset \widetilde{K}} \Covrlp (\widetilde{K}_j)  2^{-4j} | \widetilde{g}|^2_{H^2( \omega_{\widetilde{K}})} \\
\label{eq:lemma:stables_splitting_quasi_to_S:local_eq_4}
&\leq \left( C_{I_{S},3} \cdot 2^{-2j} | \widetilde{g}|_{H^2( \omega_{\widetilde{K}})} \right ) ^2
.
\end{align}
Combining \eqref{eq:lemma:stables_splitting_quasi_to_S:local_ineq_0} - \eqref{eq:lemma:stables_splitting_quasi_to_S:local_eq_4} yields
\begin{equation*}
\|  \widetilde{w} - \widetilde{I_{S_{j}}}\widetilde{w}\|_{\widetilde{K}}
\leq  \max_{\ell =1,3} C_{I_{S},\ell}
\left( \| \widetilde{g} - \widetilde{w} \|_{\omega_{\widetilde{K}}} +  2^{-2j} | \widetilde{g}|_{H^2( \omega_{\widetilde{K}})}  \right).
\end{equation*}
From the definition of the $K$-functional,
\begin{align*}
\|  \widetilde{w} - \widetilde{I_{S_{j}}}\widetilde{w}\|^2_{\widetilde{K}}
& \leq \max_{\ell =1,3}  C_{I_{S},\ell}^2 \left( \| \widetilde{g} - \widetilde{w} \|_{\omega_{\widetilde{K}}} +  2^{-2j} | \widetilde{g}|_{H^2( \omega_{\widetilde{K}})}  \right)^2\\
& \leq  2 \cdot \max_{\ell =1,3}  C_{I_{S},\ell}^2 \left( \| \widetilde{g} - \widetilde{w} \|^2_{\omega_{\widetilde{K}}} +  \left(2^{-2j} \right)^2 | \widetilde{g}|^2_{H^2( \omega_{\widetilde{K}})} \right) \\
&=  2 \cdot \max_{\ell =1,3}  C_{I_{S},\ell}^2 \cdot \big( K(2^{-2j},\widetilde{w},\omega_{\widetilde{K}}) \big)^2.
\end{align*}
In the notation introduced above, $C_{\delta} = 2 \cdot \max_{\ell =1,3}  C_{I_{S},\ell}^2$.

Using \eqref{eq:lemma:stables_splitting_quasi_to_S:scaling}, \eqref{eq:lemma:stables_splitting_quasi_to_S:scaling_K-fun} and \Cref{lemma:K_functionals} yields
\begin{align*}
\sum^{+ \infty}_{j=0} 2^{2j} \|  w - I_{S_{j}}w\|^2_K 
&\leq  h^d_K  C_{\delta}
 \sum^{+ \infty}_{j=0} 2^{2j} \big( K(2^{-2j},\widetilde{w},\omega_{\widetilde{K}}) \big)^2 \\
&\leq h^d_K C_{\delta} C_{\alpha}(\omega_{\widetilde{K}})  \| \nabla \widetilde{w} \|^2_{\omega_{\widetilde{K}}} .
\end{align*}
Re-scaling back to~$K$,
\begin{equation*}
\sum^{+ \infty}_{j=0} 2^{2j} \|  w - I_{S_{j}}w\|^2_K \leq h^{d}_K   C_{\delta} C_{\alpha }(\omega_{\widetilde{K}}) h^{2}_K  h^{-d}_K \| \nabla w \|^2_{\omega_{K}}.
\end{equation*}
Finally, note that the shape of~$\omega_{\widetilde{K}}$ depends on the shape regularity of the initial mesh and therefore $C_{\alpha }(\omega_{\widetilde{K}})$ can be bounded, for all $K \in \T_0$, by a constant~$C_{\alpha }$ depending only on~$d$ and~$\gamma_0$.
\end{proof}

\begin{thm}\label{thm:stables_splitting_quasi_to_S}
There exists a constant $C_{S,I_S}>0$ depending only on $d$ and $\gamma_0$, such that for all $w \in H^1_0(\Omega)$,
\begin{equation}
\label{eq:stables_splitting_quasi_to_S}
\| \nabla I_{S_0} w \|^2 + \sum^{+ \infty}_{j=1} \|  h^{-1}_j (I_{S_j} w - I_{S_{j-1}} w ) \| ^2 
\leq C_{S,I_S} \|\nabla w \|^2  .
\end{equation}
\end{thm}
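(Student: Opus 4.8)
The plan is to reduce the claim directly to the local $K$-functional bound of \Cref{lemma:stables_splitting_quasi_to_S}, using only a triangle inequality to handle the telescoped differences and the finite-overlap property of the patches $\omega_K$, $K\in\T_0$. First I would dispose of the coarsest-level term: by the global $H^1$-stability of the quasi-interpolation operator on the coarsest level (\Cref{thm:quasi-interpolation-to-S-global-estimates}), $\|\nabla I_{S_0} w\|^2 \le C_{I_S,4}^2\,\|\nabla w\|^2$, so this term costs nothing. For the infinite sum I would write
\begin{equation*}
I_{S_j} w - I_{S_{j-1}} w = (w - I_{S_{j-1}} w) - (w - I_{S_j} w),
\end{equation*}
and use $(a+b)^2 \le 2a^2 + 2b^2$ to bound $\sum_{j\ge1}\|h_j^{-1}(I_{S_j}w - I_{S_{j-1}}w)\|^2$ by $2\sum_{j\ge1}\|h_j^{-1}(w - I_{S_j}w)\|^2 + 2\sum_{j\ge1}\|h_j^{-1}(w - I_{S_{j-1}}w)\|^2$.

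The next step is to recognize each of these sums as (a rescaling of) the quantity appearing in \Cref{lemma:stables_splitting_quasi_to_S}. Since $h_j = 2^{-j}h_0$ as functions on $\Omega$, on every $K\in\T_0$ one has $h_j^{-2}\big|_K = 2^{2j}h_K^{-2}$, hence
\begin{equation*}
\|h_j^{-1}(w - I_{S_j}w)\|^2 = \sum_{K\in\T_0} 2^{2j}h_K^{-2}\,\|w - I_{S_j}w\|_K^2,
\qquad
\|h_j^{-1}(w - I_{S_{j-1}}w)\|^2 = 4\sum_{K\in\T_0} 2^{2(j-1)}h_K^{-2}\,\|w - I_{S_{j-1}}w\|_K^2 .
\end{equation*}
Summing over $j\ge1$, enlarging the first sum to $j\ge0$ and reindexing the second by $m=j-1\ge0$, \Cref{lemma:stables_splitting_quasi_to_S} gives
\begin{equation*}
\sum_{j\ge1}\|h_j^{-1}(w - I_{S_j}w)\|^2 \le \sum_{K\in\T_0} h_K^{-2}\sum_{m\ge0} 2^{2m}\|w - I_{S_m}w\|_K^2 \le C_\beta \sum_{K\in\T_0}\|\nabla w\|_{\omega_K}^2,
\end{equation*}
and the same bound (times the factor $4$) for the second sum. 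Finally, the finite-overlap property $\sum_{K\in\T_0}\|\nabla w\|_{\omega_K}^2 \le \big(\max_{K\in\T_0}\Covrlp(K)\big)\,\|\nabla w\|^2$ collapses everything, yielding \eqref{eq:stables_splitting_quasi_to_S} with, e.g., $C_{S,I_S} = C_{I_S,4}^2 + 10\,C_\beta \max_{K\in\T_0}\Covrlp(K)$, a constant depending only on $d$ and $\gamma_0$ since all of $C_{I_S,4}$, $C_\beta$, and $\Covrlp(K)$ do.

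I do not expect any genuine obstacle here: the argument is a routine combination of the preceding lemma with a triangle inequality and the patch overlap count. The only points requiring a little care are the bookkeeping of the mesh-size function $h_j$ when restricted to a coarse element $K\in\T_0$ (where it is the constant $2^{-j}h_K$), and the shift of summation index that turns $\sum_{j\ge1}$ over $\|h_j^{-1}(w-I_{S_{j-1}}w)\|^2$ into a sum starting at $m=0$ so that \Cref{lemma:stables_splitting_quasi_to_S} applies verbatim.
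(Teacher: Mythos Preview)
Your proposal is correct and follows essentially the same approach as the paper's own proof: both handle the coarsest term via the global $H^1$-stability of $I_{S_0}$, split each difference $I_{S_j}w - I_{S_{j-1}}w$ into $(w-I_{S_{j-1}}w)-(w-I_{S_j}w)$, use the factor-of-$4$ coming from $h_j = h_{j-1}/2$, reindex, apply \Cref{lemma:stables_splitting_quasi_to_S} on each $K\in\T_0$, and finish with the overlap bound. The resulting constant $C_{I_S,4}^2 + 10\,C_\beta\max_{K\in\T_0}\Covrlp(K)$ matches what the paper obtains.
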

\begin{proof}
From \Cref{thm:quasi-interpolation-to-S-global-estimates},
\begin{equation*}
    \| \nabla I_{S_0} w \|^2 \leq C_{I_{S_0},4}^2 \|\nabla w \|^2.
\end{equation*}
For the rest of the sum,
\begin{align*}
\sum^{+ \infty}_{j=1} \|  h^{-1}_j (I_{S_j} - I_{S_{j-1}} ) w\|  ^2  
& = \sum^{+ \infty}_{j=1} \|  h^{-1}_j ( I_{S_{j}} w  - w + w - I_{S_{j-1}} w ) \|  ^2    \\
& \leq 2 \sum^{+ \infty}_{j=1} \left(  \|  h^{-1}_j ( w - I_{S_{j}} w ) \|  ^2 + \|   h^{-1}_j ( w - I_{S_{j-1}} w ) \|  ^2 \right)    \\
& \leq 2 \sum^{+ \infty}_{j=1} \left( \|   h^{-1}_j ( w - I_{S_{j}} w ) \|  ^2 + 4 \|   h^{-1}_{j-1} ( w - I_{S_{j-1}} w ) \|  ^2 \right)    \\
& \leq 2 \left( \sum^{+ \infty}_{j=1} \|   h^{-1}_j ( w - I_{S_{j}} w ) \|  ^2 + 4 \sum^{+ \infty}_{j=0} \|   h^{-1}_{j} ( w - I_{S_{j}} w ) \|  ^2 \right)    \\
& \leq 2 \cdot 5  \sum^{+ \infty}_{j=0} \|   h^{-1}_j ( w - I_{S_{j}}w ) \|  ^2     \\
& = 10  \sum_{K \in \T_0} \sum^{+ \infty}_{j=0} 2^{2j} h^{-2}_K \|  w - I_{S_{j}}w\|^2_K \\
& \leq 10  \sum_{K \in \T_0} C_{\beta } \|  \nabla w  \|  ^2_{\omega_K}
 \leq 10 \cdot C_{\beta } \cdot \max_{K \in \T_0} \Covrlp(K)  \| \nabla w \|^2, 
\end{align*}
where we have used \Cref{lemma:stables_splitting_quasi_to_S} in the second to last inequality.
\end{proof}

\begin{thm}\label{lemma:decomposition-using-quasi-interpol-to-V-upper-bound}
There exists a constant $C_{S,I_V}>0$ depending only on $d$ and  $\gamma_0$ such that for all $w \in H^1_0(\Omega)$,
\begin{equation}
\| \nabla I_{V_{0}} w  \|^2 + 
 \sum^{+ \infty}_{j=1}   \| h^{-1}_{j} ( I_{V_{j}} w - I_{V_{j-1}} w )  \| ^2 \leq C_{S,I_V} \| \nabla w\|^2.
\end{equation}
\end{thm}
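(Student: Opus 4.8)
The plan is to deduce the bound from its $I_{S_j}$-counterpart, \Cref{thm:stables_splitting_quasi_to_S}, by controlling the boundary correction $E_j w := I_{S_j} w - I_{V_j} w$. From the definitions of the two operators,
\[
 E_j w \;=\; \sum_{z \in \N_j \setminus \K_j} \Bigl(\int_{K_z} w\,\psi_z\Bigr)\,\phi^{(j)}_z ,
\]
so $E_j w$ is supported in the union of those elements of $\T_j$ whose closure meets $\partial\Omega$, i.e.\ in a boundary layer of local width $\sim h_j$; in particular $E_j w \equiv 0$ on every element whose closure is disjoint from $\partial\Omega$. First I would dispose of the coarsest term directly, $\|\nabla I_{V_0} w\|^2 \le C_{I_{V},4}^2 \|\nabla w\|^2$, by \Cref{thm:quasi-interpolation-to_V-global-estimates}. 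For $j\ge1$, writing $I_{V_j} w - I_{V_{j-1}} w = (I_{S_j} w - I_{S_{j-1}} w) - E_j w + E_{j-1} w$, using $h_j = \tfrac12 h_{j-1}$ and $(a+b+c)^2 \le 3(a^2+b^2+c^2)$, and bounding the $I_S$-part with \Cref{thm:stables_splitting_quasi_to_S}, the whole statement reduces to proving
\[
 \sum_{j\ge0} \bigl\| h^{-1}_j E_j w \bigr\|^2 \;\le\; C\,\|\nabla w\|^2 , \qquad \forall\, w \in H^1_0(\Omega),
\]
with $C$ depending only on $d$ and $\gamma_0$.

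For this estimate I would work element by element over $\T_0$, as in the proof of \Cref{thm:stables_splitting_quasi_to_S}; only the boundary-touching coarse elements $K\in\T_0$ contribute. On a level-$j$ element $K_j$ with a vertex on $\partial\Omega$, the bounded overlap of hat functions, the estimates $\|\phi^{(j)}_z\|^2_{K_j} \lesssim |K_z|$ and $|K_j| \approx |K_z|$ (shape regularity), and inequality~\eqref{eq:quasi} give $\|E_j w\|^2_{K_j} \lesssim \sum_{z} \|w\|^2_{K_z} \lesssim \|w\|^2_{\widehat\omega_{K_j}}$ for an $\mathcal{O}(1)$-element neighbourhood $\widehat\omega_{K_j}$ of $K_j$. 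Summing over the boundary-touching elements of $\T_j$, grouping them by their coarse parent in $\T_0$, and using bounded overlap, I obtain, with constants depending only on $d$ and $\gamma_0$,
\[
 \bigl\| h_j^{-1} E_j w \bigr\|^2 \;\lesssim\; \sum_{K\in\T_0} h_K^{-2}\,2^{2j}\,\|w\|^2_{\Sigma_{K,j}}, \qquad \Sigma_{K,j} := \bigl\{x\in\omega_K:\ \mathrm{dist}(x,\partial\Omega) \le c\,2^{-j}h_K\bigr\}.
\]
It is crucial to use here only the $L^2$-stability~\eqref{eq:quasi} and \emph{not} Friedrich's inequality (\Cref{lemma:Friedrichs}) on $\Sigma_{K,j}$: Friedrich would contribute a factor $(2^{-j}h_K)^2$ that exactly cancels $h_K^{-2}2^{2j}$, leaving a logarithmically divergent series.

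To perform the summation over $j$ I would decompose each boundary-touching coarse patch $\omega_K$ into dyadic distance shells $A_{K,k} := \{x\in\omega_K:\ 2^{-k-1}h_K < \mathrm{dist}(x,\partial\Omega) \le 2^{-k}h_K\}$, $k\ge0$; the contributions not captured by these shells (in effect only the level $j=0$ term) are bounded by $C\|\nabla w\|^2_{\widehat\omega_K}$ via Friedrich's inequality on the boundary-touching patch $\widehat\omega_K$. Since $\Sigma_{K,j}$ meets $A_{K,k}$ only for $k \ge j - c_0$, interchanging the order of summation and summing the geometric series in $j$ gives
\[
 \sum_{j\ge1} h_K^{-2}\,2^{2j}\,\|w\|^2_{\Sigma_{K,j}} \;\lesssim\; \sum_{k\ge1} h_K^{-2}\,2^{2k}\,\|w\|^2_{A_{K,k}} \;\le\; \int_{\omega_K} \frac{w(x)^2}{\mathrm{dist}(x,\partial\Omega)^2}\,dx,
\]
where the last inequality uses $h_K^{-2}2^{2k} = (2^{-k}h_K)^{-2} \le \mathrm{dist}(x,\partial\Omega)^{-2}$ on $A_{K,k}$. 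Summing over $K\in\T_0$ and using bounded overlap, together with a (localized) Hardy inequality for $H^1_0$-functions --- whose constant is controlled by $d$ and $\gamma_0$, since a shape-regular $\T_0$ fitting the polytope controls the local geometry of $\partial\Omega$ --- this yields $\sum_j \|h_j^{-1} E_j w\|^2 \lesssim \|\nabla w\|^2$, completing the proof.

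The main difficulty is precisely this last summation. The refinement factor $2^{2j}$ produced by $h_j^{-2}$ is exactly matched by the $\mathcal{O}((2^{-j}h_K)^2)$ decay of the $L^2$-mass of a \emph{generic} $H^1$-function on the shrinking boundary strip $\Sigma_{K,j}$, so no elementary per-level estimate can succeed; one must exploit quantitatively that $w$ vanishes on $\partial\Omega$, which is exactly where the hypothesis $w\in H^1_0(\Omega)$ (rather than $w\in H^1(\Omega)$) enters and which is captured by Hardy's inequality. A secondary technicality, and the reason the bookkeeping is organized per coarse element and by distance to $\partial\Omega$ rather than by global mesh level, is that $\T_0$ is not assumed quasi-uniform: the correct local scale inside $K\in\T_0$ is $2^{-j}h_K$, not $2^{-j}h_0$, and carrying this through is what makes the constant independent of $h_\Omega/\min_{K\in\T_0}h_K$.
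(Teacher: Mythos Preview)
Your proof is correct and takes a genuinely different route from the paper's. You split off the boundary correction $E_jw=I_{S_j}w-I_{V_j}w$, reduce everything (via \Cref{thm:stables_splitting_quasi_to_S}) to $\sum_{j\ge0}\|h_j^{-1}E_jw\|^2\lesssim\|\nabla w\|^2$, and prove this by a dyadic boundary-layer decomposition that culminates in Hardy's inequality for $H^1_0$-functions. The paper never isolates $E_j$: instead it expands $w$ in $I_S$-increments $w_i=(I_{S_i}-I_{S_{i-1}})w$, writes $(I_{V_j}-I_{V_{j-1}})w=\sum_{i\ge j}(I_{V_j}-I_{V_{j-1}})w_i$ using that $I_{V_j}$ is a projection onto $V_j$, and controls the double sum by the $L^2$-stability of $I_{V_j}$ (\Cref{thm:quasi-interpolation_to_V_estimates}), a geometric-weight Cauchy--Schwarz, an interchange of summation, and finally \Cref{thm:stables_splitting_quasi_to_S}. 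Your argument is geometrically transparent and makes explicit precisely where the hypothesis $w\in H^1_0(\Omega)$ enters (Hardy), but it imports Hardy's inequality as an external ingredient; the claim that its constant is controlled by $(d,\gamma_0)$ alone --- via the observation that a shape-regular $\T_0$ conforming to $\partial\Omega$ bounds the local Lipschitz character of the boundary --- is correct but deserves a reference or a short local argument. The paper's approach stays entirely within the interpolation estimates of Appendix~\ref{sec:quasi-interpolation-operator} and needs no Hardy inequality, at the price of a less direct double-index bookkeeping.
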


\begin{proof}
The key steps of the following proof of the upper bound were provided to us by professor P.~Oswald in personal communications.
From \Cref{thm:quasi-interpolation-to_V-global-estimates},
\begin{equation*}
    \| \nabla I_{V_0} w \|^2 \leq C_{I_{V_0},4}^2 \|\nabla w \|^2.
\end{equation*}
To bound $\sum^{+ \infty}_{j=1}   \left \lVert h^{-1}_{j} ( I_{V_{j}} w - I_{V_{j-1}} w )  \right \rVert ^2$, consider the sequence $w_j=(I_{S_j} - I_{S_{j-1}})w$, $j\in \mathbb{N}$.
Let $K \in \T_0$. We will first show that there exists a constant $C_{\epsilon}>0$ depending only on $d$ and $\gamma_0$ such that 
\begin{equation}\label{eq:local_est_sum_V_j}
\sum^{+\infty}_{j=1} 2^{2j} 
 \| (I_{V_{j}}-I_{V_{j-1}})w \|^2_K 
\leq   C_{\epsilon}  \sum^{+ \infty}_{i=1} 2^{2i}  \| w_i \|^2_{\omega_K}.
\end{equation}

Since $I_{V_{j}}$ are projections onto $V_j$, it holds that
\begin{equation}
(I_{V_{j}}-I_{V_{j-1}})w_i = 0,\quad j>i.
\end{equation}
Then for all $j \geq 1$, using the Cauchy--Schwarz inequality for sums and \Cref{thm:quasi-interpolation_to_V_estimates},
\begin{align*}
\| (I_{V_{j}}-I_{V_{j-1}})w \|^2_{K} & = \int_{K}  \left( \sum^{+ \infty}_{i=j} (I_{V_{j}}-I_{V_{j-1}}) w_i \right)^2 \\
&\leq \int_{K}  \left( \sum^{+ \infty}_{i=j} 2^{ -i } \right) \left( \sum^{+ \infty}_{i=j} 2^{ i } \left( (I_{V_{j}}-I_{V_{j-1}}) w_i \right)^2 \right) \\
&\leq 2 \cdot2^{-j} \sum^{+ \infty}_{i=j} 2^{ i} \| (I_{V_{j}} - I_{V_{j-1}})w_i \|^2_K \\
&\leq 2 \cdot2^{-j} \sum^{+ \infty}_{i=j} 2^{ i} \cdot 2\left( \|  I_{V_{j}} w_i \|^2_K + \| I_{V_{j-1}} w_i \|^2_K \right) \\
&\leq 2 \cdot  2^{ -j } \sum^{+ \infty}_{i=j}  2^{i}  \cdot 2 \cdot 2\cdot \left(\widehat{C}_{I_{V},1}\right)^2  \| w_i \|^2_{\omega_K}.
\end{align*}
Consequently, 
\begin{equation*}
\sum^{+\infty}_{j=1} 2^{2j} 
 \| (I_{V_{j}}-I_{V_{j-1}})w \|^2_K 
\leq  \underbrace{ 8 \cdot \left(\widehat{C}_{I_{V_{\T}},1}\right)^2}_{C_{\epsilon}} \sum^{+ \infty}_{j=1} 2^{j}  \sum^{+ \infty}_{i=j}  2^{ i} \| w_i \|^2_{\omega_K}.
\end{equation*}
Changing the order of summation,
\begin{align*}
\sum^{+ \infty}_{j=1} 2^{j}   \sum^{+ \infty}_{i=j}  2^{ i} \| w_i \|^2_{\omega_K}& =    \sum^{+ \infty}_{i=1} 2^{i}  \| w_i \|^2_{\omega_K} \sum^{i-1}_{j=1} 2^{j} 
\leq    \sum^{+ \infty}_{i=1} 2^{2i}  \| w_i \|^2_{\omega_K}.
\end{align*}

For the sum $\sum^{+ \infty}_{j=1} 
 \| h^{-1}_j(I_{V_{j}}-I_{V_{j-1}})w \| ^2 $, using \eqref{eq:local_est_sum_V_j},
\begin{align*}
\sum^{+ \infty}_{j=1} 
 \| h^{-1}_j(I_{V_{j}}-I_{V_{j-1}})w \| ^2 
 &= \sum_{K \in \T_0}  h^{-2}_{K} \sum^{+ \infty}_{j=1} 2^{2j} 
 \| (I_{V_{j}}-I_{V_{j-1}})w \|^2_K \\
 & \leq  C_{\epsilon}  \sum_{K \in \T_0}  h^{-2}_{K} \sum^{+ \infty}_{i=1} 2^{2i}   \| w_i \|^2_{\omega_K} \\ 
 & \leq  C_{\epsilon}   \sum^{+ \infty}_{i=1} 2^{2i} \sum_{K \in \T_0}  h^{-2}_{K} \| w_i \|^2_{\omega_K} \\
 & \leq  C_{\epsilon}  \sum^{+ \infty}_{i=1} 2^{2i} \sum_{K \in \T_0} \Covrlp(K) \max_{\bar{K} \subset \omega_K} h^{-2}_{\bar{K}} \| w_i \|^2_{K} \\
 & \leq \underbrace{C_{\epsilon}  \max_{K \in \T_0} \left[ \Covrlp(K) \frac{\max_{\bar{K} \subset \omega_K} h^{-2}_{\bar{K}}}{h^{-2}_K} \right]}_{C_{\zeta}} \sum^{+ \infty}_{i=1} 2^{2i} \sum_{K \in \T_0}   h^{-2}_K \| w_i \|^2_{K}.
\end{align*}
Finally, \Cref{thm:stables_splitting_quasi_to_S} gives
\begin{equation*}
\sum^{+ \infty}_{j=1}    \| h^{-1}_j (I_{V_{j}} - I_{V_{j-1}})w \| ^2 
\leq C_{\zeta} C_{S,I_{S}} \| \nabla w\|^2.
\end{equation*}
\end{proof}

Now proceed with bounding the norm of the splittings from below by a $H^1$-seminorm. We start with some auxiliary lemmas.

\begin{lemma}
\label{lemma:upper_bound_finite}
There exists a constant $c_{S}>0$ depending only on $d$ and $\gamma_0$ such that for any $N \in \mathbb{N}_0$ and any sequence  $(w_j)^{N}_{j=0} ,w_j\in S_j$, $j=0, \ldots,N$, it holds that 
\begin{align}
 \left \lVert \nabla \left(  \sum^{N}_{j=0} w_j \right)  \right \rVert ^2 \leq \frac{1}{c_{S}} \left( \| \nabla  w_0 \|^2  +  \sum^{N}_{j=1}   \| h^{-1}_{j} w_j \| ^2\right) .
\end{align}
\end{lemma}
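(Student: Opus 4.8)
The plan is to establish a strengthened Cauchy--Schwarz inequality between two levels and then sum the resulting geometrically decaying bilinear form by a Schur argument. Write $v:=\sum_{j=0}^N w_j$ and, for $u,w\in H^1(\Omega)$, $a(u,w):=\int_\Omega\nabla u\cdot\nabla w$, so that
\[
\Big\|\nabla v\Big\|^2 \;=\; \sum_{j=0}^N\|\nabla w_j\|^2 \;+\; 2\!\!\sum_{0\le i<j\le N}\!\! a(w_i,w_j).
\]
The key step is to show that there is a constant $C_\star>0$ depending only on $d$ and $\gamma_0$ such that, for all $0\le i\le j\le N$,
\[
|a(w_i,w_j)| \;\le\; C_\star\, 2^{-(j-i)/2}\,\|\nabla w_i\|\,\|h^{-1}_j w_j\| .
\]

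To prove this estimate I would first integrate by parts elementwise on the coarse mesh $\T_i$: since $w_i\in S_i$ is piecewise affine, $\Delta w_i=0$ on each $K\in\T_i$, and Green's theorem with the continuity of $w_j$ across interior faces gives $a(w_i,w_j)=\sum_{E\in\E_i}\int_E[\nabla w_i\cdot n_E]\,w_j$ (on a boundary face the jump is read as the one-sided normal derivative; $w_j\in S_j$ need not vanish on $\partial\Omega$, but this is harmless). On a face $E$ the constant $[\nabla w_i\cdot n_E]$ is controlled, via shape regularity ($\meas{K}\sim h_K^d$, $\meas{E}\sim h_K^{d-1}$, and comparability of the diameters of neighbouring simplices), by $C\,h_{K}^{-d/2}\|\nabla w_i\|_K$ for an adjacent $K=K(E)\in\T_i$; combined with $\|w_j\|_{L^1(E)}\le\meas{E}^{1/2}\|w_j\|_{L^2(E)}$ and a Cauchy--Schwarz over the faces of $\T_i$ (each simplex has $d+1$ faces), this yields $|a(w_i,w_j)|\le C\,\|\nabla w_i\|\,\big(\sum_{E\in\E_i}h_{K(E)}^{-1}\|w_j\|_{L^2(E)}^2\big)^{1/2}$.

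It remains to bound the last sum. Under the uniform dyadic refinement every coarse face $E\in\E_i$ is the disjoint union of fine faces $e\in\E_j$, and the adjacent fine simplex $K_j(e)\in\T_j$ satisfies $h_{K_j(e)}=2^{i-j}h_{K^*}$ for its coarse ancestor $K^*$, whose diameter is comparable to $h_{K(E)}$. Applying the trace--inverse estimate \Cref{lemma:trace+inverse_ineq} on $\T_j$, $\|w_j\|_{L^2(e)}^2\le C_{\TI}h_{K_j(e)}^{-1}\|w_j\|_{K_j(e)}^2$, inserting it, keeping track of the powers of $2$, and summing (each $K_j\in\T_j$ occurs boundedly often) gives $\sum_{E\in\E_i}h_{K(E)}^{-1}\|w_j\|_{L^2(E)}^2\le C\,2^{-(j-i)}\|h^{-1}_jw_j\|^2$, which is the key estimate; all constants depend only on $d$ and $\gamma_0$, since $\gamma_j$ is controlled by $\gamma_0$ (in $3$D by \cite{Zha95}) and no quasi-uniformity of $\T_0$ is used. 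Summing is then routine: by the inverse inequality \Cref{lemma:local_inverse_ineq} summed over elements, $\|\nabla w_i\|\le C_{\INV}\|h^{-1}_iw_i\|$ for $i\ge1$, so with $b_0:=\|\nabla w_0\|$ and $b_i:=\|h^{-1}_iw_i\|$ for $i\ge1$ one has $\|\nabla w_i\|^2\le\widetilde C b_i^2$ and $|a(w_i,w_j)|\le\widetilde C\,2^{-|i-j|/2}b_ib_j$ for all $i,j$, with $\widetilde C$ depending only on $d,\gamma_0$; since the Toeplitz matrix $\big(2^{-|i-j|/2}\big)_{i,j}$ has $\ell^2$ operator norm at most $\sum_{k\in\mathbb{Z}}2^{-|k|/2}<\infty$ (Schur test), $\|\nabla v\|^2\le C\sum_i b_i^2=C\big(\|\nabla w_0\|^2+\sum_{j=1}^N\|h^{-1}_jw_j\|^2\big)$, and $c_S:=C^{-1}$ has the required dependence.

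\textbf{Main obstacle.} The delicate point is the geometric bookkeeping in the key estimate --- tracking $h_{K(E)}$ against $h_{K_j(e)}$ across the $j-i$ refinement levels so that the favourable factor $2^{-(j-i)}$ (and not a growing one) appears --- and verifying that both local-to-global passages, on $\T_i$ for $\nabla w_i$ and on $\T_j$ for the trace of $w_j$, cost only a constant depending on $d$ and $\gamma_0$, independently of $N$ and of the quasi-uniformity of the initial mesh. Everything else is a standard Cauchy--Schwarz/Schur-test argument.
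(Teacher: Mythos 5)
Your proof is correct and follows essentially the same approach as the paper: the paper's own proof is a citation of Bornemann and Yserentant (1993, Lemma~3.4) for $d=2$ and Xu (1992, Lemma~6.1) for $d=3$, both of which rest on the strengthened Cauchy--Schwarz inequality obtained, exactly as you do, via elementwise integration by parts on the coarse mesh, a trace/inverse estimate on the fine mesh, and a geometric-series (Schur) summation. You have simply spelled out the details that the paper delegates to those references, including the correct handling of the non-vanishing boundary values of $w_j\in S_j$ and the bookkeeping of powers of $2$ across the $j-i$ refinement levels.
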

\begin{proof}
The proof for $d=2$ is given in \cite[Lemma~3.4]{Bornemann1993}. It is based on the so-called Strengthened Cauchy--Schwarz inequality.
As the Strengthened Cauchy--Schwarz inequality is valid also for $d=3$ (see, e.g., \cite[Lemma~6.1]{Xu1992}), the proof of the theorem in the three-dimensional case is analogous to the two-dimensional one.
\end{proof}
\begin{lemma}\label{lemma:upper_bound_infinite_convergence}
Let $(w_j)^{+ \infty}_{j=0}$,  $w_j\in V_j$, $j\in \mathbb{N}_0$, be a sequence which satisfies
\begin{equation*}
\| \nabla  w_0 \|^2  +  \sum^{+ \infty}_{j=1}   \| h^{-1}_{j} w_j \| ^2 < + \infty.
\end{equation*}
Then $\sum^{+ \infty}_{j=0}w_j$ converges in $\left( H^1_0(\Omega), \| \nabla \cdot \|\right)$.
\end{lemma}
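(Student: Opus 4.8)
The plan is to show that the partial sums $S_N = \sum_{j=0}^{N} w_j$ form a Cauchy sequence in $\left( H^1_0(\Omega), \|\nabla \cdot\| \right)$; completeness of this space then gives the conclusion. For $M < N$, the difference $S_N - S_M = \sum_{j=M+1}^{N} w_j$ is a finite sum with $w_j \in S_j$ for each $j$ (indeed $w_j \in V_j \subset S_j$), so \Cref{lemma:upper_bound_finite} applies to the \emph{shifted} sequence. However, \Cref{lemma:upper_bound_finite} as stated requires the lowest-index term to be controlled in the full $\|\nabla\cdot\|$-norm rather than the scaled $L^2$-norm, so the cleanest route is to note that for $j \geq M+1 \geq 1$ we may still bound $\|\nabla(\sum_{j=M+1}^N w_j)\|^2$ by a constant times $\sum_{j=M+1}^{N} \|h^{-1}_j w_j\|^2$. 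This follows either by re-examining the Strengthened Cauchy--Schwarz argument underlying \Cref{lemma:upper_bound_finite} (which treats all levels $j\geq 1$ uniformly), or by applying \Cref{lemma:upper_bound_finite} with $w_0 := 0$ and re-indexing; the term $\|h^{-1}_{M+1} w_{M+1}\|^2$ simply plays the role of the new ``coarsest'' contribution and is already of the scaled form.

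Concretely, first I would establish the estimate
\[
\left\| \nabla \!\left( \sum_{j=M+1}^{N} w_j \right) \right\|^2 \;\leq\; \frac{1}{c_S} \sum_{j=M+1}^{N} \| h^{-1}_{j} w_j \|^2
\]
for all $0 \le M < N$, using the reasoning of \Cref{lemma:upper_bound_finite}. Next, since the full series $\sum_{j=1}^{+\infty} \|h^{-1}_j w_j\|^2$ converges by hypothesis, its tail $\sum_{j=M+1}^{N} \|h^{-1}_j w_j\|^2$ tends to $0$ as $M \to +\infty$ uniformly in $N > M$. Combining these two facts shows $\|\nabla(S_N - S_M)\| \to 0$, i.e. $(S_N)$ is Cauchy. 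Finally, invoking the completeness of $\left(H^1_0(\Omega), \|\nabla\cdot\|\right)$ (which is a Hilbert space), the limit $\sum_{j=0}^{+\infty} w_j$ exists in $H^1_0(\Omega)$, which is the assertion.

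The only mildly delicate point — the part I would be most careful about — is the bookkeeping with the coarsest-level term: \Cref{lemma:upper_bound_finite} singles out $\|\nabla w_0\|$, whereas in the Cauchy tail there is no level-$0$ term and the smallest index is $M+1 \geq 1$, carrying only the scaled norm $\|h^{-1}_{M+1} w_{M+1}\|$. One must make sure the constant $c_S$ from \Cref{lemma:upper_bound_finite} still governs this shifted finite sum; this is true because the Strengthened Cauchy--Schwarz inequality bounds the cross terms $\int \nabla w_j \cdot \nabla w_k$ by $C\, q^{|j-k|} \|h^{-1}_j w_j\| \|h^{-1}_k w_k\|$ with $q<1$ for \emph{any} pair of levels (the level-$0$ special treatment in \Cref{lemma:upper_bound_finite} is merely a normalization artifact, since $\|h^{-1}_0 w_0\| \simeq \|\nabla w_0\|$ up to constants on the fixed coarsest mesh), so the same geometric-series argument yields the displayed tail estimate with the same $c_S$ (possibly after harmless adjustment of the constant by a factor depending only on $d$ and $\gamma_0$).
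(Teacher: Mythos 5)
Your proof is correct and follows essentially the same route as the paper: show the partial sums are Cauchy in $(H^1_0(\Omega),\|\nabla\cdot\|)$ by applying \Cref{lemma:upper_bound_finite} to the finite Cauchy tail, then invoke completeness. The paper applies \Cref{lemma:upper_bound_finite} ``for $w_j$, $j=n,\ldots,m$'' and writes $\|\nabla(\sum_{j=n}^m w_j)\|^2\le \frac{1}{c_S}\sum_{j=n}^m\|h^{-1}_j w_j\|^2$ without further comment; the concern you raise about the coarsest-index term carrying the $\|\nabla\cdot\|$-norm is real but is resolved cleanly by the second alternative you mention (and not by re-opening the Strengthened Cauchy--Schwarz argument): set $\widetilde w_0=\dots=\widetilde w_{n-1}:=0$ and $\widetilde w_j:=w_j$ for $n\le j\le m$, apply \Cref{lemma:upper_bound_finite} to $(\widetilde w_j)_{j=0}^m$, and note $\|\nabla\widetilde w_0\|=0$, giving exactly the displayed tail bound with the same $c_S$ and no adjustment of constants. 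You can therefore drop the discussion about ``possibly after harmless adjustment of the constant'' and about $\|h^{-1}_0 w_0\|\simeq\|\nabla w_0\|$ — neither is needed.
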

\begin{proof}
We will use Lemma \ref{lemma:upper_bound_finite} to show that $(\sum^{N}_{j=0}w_j)^{+ \infty}_{N=0}$ is a Cauchy sequence in $\left( H^1_0(\Omega), \| \nabla \cdot \|\right)$. Let $\epsilon>0$. Since  $\| \nabla  w_0 \|^2  +  \sum^{+ \infty}_{j=1}   \| h^{-1}_{j} w_j \|^2$ converges in $\R$, there exists $M\in\mathbb{N}$ such that for all $m>n>M$, it holds that 
\begin{equation*}
  \sum^{m}_{j=n} \| h^{-1}_{j} w_j \| ^2 < c_S {\epsilon^2}.
\end{equation*}
Using Lemma \ref{lemma:upper_bound_finite} for $w_j$, $j=n,\ldots,m$, and the previous inequality,  
\begin{equation*}
\left\lVert \nabla \left(  \sum^{m}_{j=n} w_j \right)  \right\rVert^2 \leq \frac{1}{c_S} \sum^{m}_{j=n}   \| h^{-1}_{j} w_j \| ^2 < \epsilon^2,
\end{equation*}
i.e., $(\sum^{N}_{j=0}w_j)^{+ \infty}_{N=0}$ is a Cauchy sequence in $\left( H^1_0(\Omega), \| \nabla \cdot \|\right)$ and thus $\sum^{+\infty}_{j=0}w_j$ converges in $\left( H^1_0(\Omega), \| \nabla \cdot \|\right)$.
\end{proof}
\begin{lemma}\label{lemma:upper_bound_infinite}
Let $c_{S}$ be the constant from  \Cref{lemma:upper_bound_finite}. Let $(w_j)^{+ \infty}_{j=0}$,  $w_j\in V_j$, $j\in \mathbb{N}_0$, be a sequence such that  $\sum^{+ \infty}_{j=0}w_j$ converges in $\left( H^1_0(\Omega), \| \nabla \cdot \|\right)$. Then
\begin{equation*}
\left\lVert \nabla \left(  \sum^{+\infty}_{j=0} w_j \right)  \right\rVert ^2 \leq \frac{1}{c_{S}} \left( \| \nabla  w_0 \|^2  +  \sum^{+ \infty}_{j=1}   \| h^{-1}_{j} w_j \| ^2\right).
\end{equation*}
\end{lemma}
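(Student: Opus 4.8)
The plan is to obtain the infinite-sum bound by passing to the limit in the finite-level estimate of \Cref{lemma:upper_bound_finite}. If the right-hand side $\|\nabla w_0\|^2 + \sum_{j=1}^{+\infty}\|h_j^{-1}w_j\|^2$ is infinite, the inequality is trivial, so I would assume it is finite.

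First, for each $N\in\mathbb{N}_0$ I would apply \Cref{lemma:upper_bound_finite} to the truncated sequence $(w_j)_{j=0}^N$ --- this is admissible since $V_j\subset S_j$ --- to obtain
\[
\left\lVert \nabla\Big(\sum_{j=0}^N w_j\Big)\right\rVert^2 \;\leq\; \frac{1}{c_S}\Big(\|\nabla w_0\|^2 + \sum_{j=1}^N \|h_j^{-1}w_j\|^2\Big) \;\leq\; \frac{1}{c_S}\Big(\|\nabla w_0\|^2 + \sum_{j=1}^{+\infty}\|h_j^{-1}w_j\|^2\Big),
\]
the second inequality holding because the summands are nonnegative. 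The key point is that this bound is uniform in $N$.

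Then I would invoke the hypothesis that $\sum_{j=0}^{+\infty}w_j$ converges in $(H^1_0(\Omega),\|\nabla\cdot\|)$; calling the limit $w$, the partial sums $\sum_{j=0}^N w_j$ converge to $w$ in this norm, and by continuity of the norm $\big\|\nabla(\sum_{j=0}^N w_j)\big\| \to \|\nabla w\|$ as $N\to\infty$. Letting $N\to\infty$ in the displayed inequality yields
\[
\|\nabla w\|^2 \;\leq\; \frac{1}{c_S}\Big(\|\nabla w_0\|^2 + \sum_{j=1}^{+\infty}\|h_j^{-1}w_j\|^2\Big),
\]
which is the assertion. There is no genuine obstacle here: the whole argument is a uniform-boundedness-plus-convergence limit passage, and the only thing to be careful about is that the right-hand side of \Cref{lemma:upper_bound_finite} is monotone nondecreasing in $N$ and dominated by the full (convergent) series, which is exactly what legitimizes taking the limit.
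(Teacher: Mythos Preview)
Your proof is correct and follows essentially the same approach as the paper: apply \Cref{lemma:upper_bound_finite} to the partial sums, then pass to the limit using the assumed convergence in $(H^1_0(\Omega),\|\nabla\cdot\|)$ and continuity of the norm. Your explicit remarks that $V_j\subset S_j$ and that the case of an infinite right-hand side is trivial are small clarifications the paper leaves implicit.
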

\begin{proof}
For any $N \in \mathbb{N}_0$, \Cref{lemma:upper_bound_finite} gives
\begin{equation*}
    \left\lVert \nabla \left(  \sum^{N}_{j=0} w_j \right)  \right\rVert^2  \leq   \frac{1}{c_S} \left( \left\lVert \nabla  w_0 \right\rVert^2  +  \sum^{N}_{j=1}   \| h^{-1}_{j} w_j \| ^2\right).
\end{equation*}
Since  $\sum^{+ \infty}_{j=0}w_j$ converges in $\left( H^1_0(\Omega), \left\lVert \nabla \cdot \right\lVert\right)$, we may switch the following limit and norm, giving
\begin{align*}
 & \left\lVert \nabla \left(  \lim_{ N \to + \infty} \sum^{N}_{j=0} w_j \right)  \right\rVert^2  = \lim_{N \to + \infty} \left\lVert \nabla \left(  \sum^{N}_{j=0} w_j \right)  \right\rVert^2 \\ &\qquad \qquad  \leq \lim_{N\to +\infty}   \frac{1}{c_S} \left( \left\lVert \nabla  w_0 \right\rVert^2  +  \sum^{N}_{j=1}   \| h^{-1}_{j} w_j \| ^2\right)  =    \frac{1}{c_S} \left( \| \nabla  w_0 \| ^2  +  \sum^{+ \infty}_{j=1}   \| h^{-1}_{j} w_j \| ^2\right).
\end{align*}
\end{proof}

\begin{thm}\label{thm:decomposition-using-quasi-interpol-to-V}
Any function $w \in H^1_0(\Omega)$ can be uniquely decomposed as 
\begin{equation*}
    w = I_{V_{0}} w + \sum^{+ \infty }_{j=1} (I_{V_{j}} - I_{V_{j-1}}) w;
\end{equation*}
(the convergence of the sum is understood in the space $(H^1_0(\Omega), \|\nabla \cdot \|)$).
Let $c_S$ be the constant from Lemma \ref{lemma:upper_bound_finite} and $C_{S,I_V}$ the constant from \Cref{lemma:decomposition-using-quasi-interpol-to-V-upper-bound}. Then for all $w \in H^1_0(\Omega)$,
\begin{equation}\label{eq:stables_splitting_quasi_to_V}
c_S \| \nabla w\|^2\leq 
\left \lVert \nabla I_{V_{0}} w  \right \rVert ^2 + 
 \sum^{+ \infty}_{j=1}  \| h^{-1}_{j} ( I_{V_{j}} w - I_{V_{j-1}} w ) \| ^2 \leq C_{S,I_V} \| \nabla w\|^2.
\end{equation}
\end{thm}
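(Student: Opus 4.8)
The plan is to obtain the claim by gluing together the auxiliary results already established. The upper bound in \eqref{eq:stables_splitting_quasi_to_V} is nothing but \Cref{lemma:decomposition-using-quasi-interpol-to-V-upper-bound}, so the only real work is (a) to show that the telescoping series on the right actually converges to $w$ in $(H^1_0(\Omega),\|\nabla\cdot\|)$, after which (b) the lower bound will drop out of \Cref{lemma:upper_bound_infinite}.

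First I would introduce the notation $w_0:=I_{V_0}w$ and $w_j:=(I_{V_j}-I_{V_{j-1}})w$ for $j\geq 1$; note $w_0\in V_0$ and $w_j\in V_j$ for every $j$ since $V_{j-1}\subset V_j$. By \Cref{lemma:decomposition-using-quasi-interpol-to-V-upper-bound} we have $\|\nabla w_0\|^2+\sum_{j\geq1}\|h_j^{-1}w_j\|^2\leq C_{S,I_V}\|\nabla w\|^2<+\infty$, which is already the upper bound in \eqref{eq:stables_splitting_quasi_to_V}; moreover this finiteness lets us invoke \Cref{lemma:upper_bound_infinite_convergence} to conclude that $\sum_{j=0}^{+\infty}w_j$ converges in $(H^1_0(\Omega),\|\nabla\cdot\|)$ to some limit.

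The key step is to identify that limit as $w$. The partial sums telescope, $\sum_{j=0}^{N}w_j=I_{V_0}w+\sum_{j=1}^{N}(I_{V_j}w-I_{V_{j-1}}w)=I_{V_N}w$, so it suffices to prove $I_{V_N}w\to w$ in $(H^1_0(\Omega),\|\nabla\cdot\|)$. I would do this by a density argument: given $\varepsilon>0$, choose $m$ and $v\in V_m$ with $\|\nabla(w-v)\|<\varepsilon$, using that $\bigcup_m V_m$ is dense in $H^1_0(\Omega)$ (the standard consequence of $h_m=2^{-m}h_0\to 0$ together with shape regularity). For $N\geq m$ we have $V_m\subseteq V_N$ and $I_{V_N}$ is a projection onto $V_N$, hence $I_{V_N}v=v$ and $w-I_{V_N}w=(w-v)-I_{V_N}(w-v)$; the $H^1_0(\Omega)$-stability \eqref{eq:quasi-interpolation-to_V-global_grad} of $I_{V_N}$, whose constant $C_{I_{V},4}$ depends only on $d$ and $\gamma_0$ and is therefore uniform in $N$, then gives $\|\nabla(w-I_{V_N}w)\|\leq(1+C_{I_{V},4})\|\nabla(w-v)\|<(1+C_{I_{V},4})\varepsilon$. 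Letting $\varepsilon\to0$ shows $I_{V_N}w\to w$, so $w=I_{V_0}w+\sum_{j=1}^{+\infty}(I_{V_j}-I_{V_{j-1}})w$; uniqueness of this representation is immediate, as its terms are explicit functions of $w$.

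Finally, knowing $\sum_{j=0}^{+\infty}w_j=w$ with $w_j\in V_j$, \Cref{lemma:upper_bound_infinite} yields $\|\nabla w\|^2=\|\nabla(\sum_{j=0}^{+\infty}w_j)\|^2\leq c_S^{-1}\big(\|\nabla I_{V_0}w\|^2+\sum_{j\geq1}\|h_j^{-1}(I_{V_j}-I_{V_{j-1}})w\|^2\big)$, i.e. the lower bound $c_S\|\nabla w\|^2\leq\|\nabla I_{V_0}w\|^2+\sum_{j\geq1}\|h_j^{-1}(I_{V_j}-I_{V_{j-1}})w\|^2$ in \eqref{eq:stables_splitting_quasi_to_V}. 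The only nonroutine ingredient is the $H^1_0$-convergence $I_{V_N}w\to w$, which I expect to be the main obstacle; once the uniform stability of the quasi-interpolants and density of the union of the $V_m$ are in place, it is short, and the rest is pure bookkeeping on top of Lemmas~\ref{lemma:upper_bound_finite}--\ref{lemma:upper_bound_infinite} and \Cref{lemma:decomposition-using-quasi-interpol-to-V-upper-bound}.
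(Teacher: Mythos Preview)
Your proof is correct and follows the same overall architecture as the paper: invoke \Cref{lemma:decomposition-using-quasi-interpol-to-V-upper-bound} for the upper bound and finiteness, apply \Cref{lemma:upper_bound_infinite_convergence} to get $H^1_0$-convergence of the telescoping series, identify the limit as $w$, and then read off the lower bound from \Cref{lemma:upper_bound_infinite}. The only genuine difference lies in how the limit is identified. The paper first establishes $L^2$-convergence $I_{V_N}w\to w$ via the approximation estimate \eqref{eq:glob_interp_Vj_2}, namely $\|w-I_{V_N}w\|\leq C_{I_V,2}\max_{K\in\T_N}h_K\,\|\nabla w\|\to0$, and then argues by contradiction: since the series already has some $H^1_0$-limit, Friedrich's inequality forces that limit to coincide with the $L^2$-limit $w$. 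Your route---density of $\bigcup_m V_m$ in $H^1_0(\Omega)$ combined with the uniform $H^1_0$-stability \eqref{eq:quasi-interpolation-to_V-global_grad} and the projection property---proves $I_{V_N}w\to w$ in $H^1_0$ directly, without passing through $L^2$. Both arguments are short; yours is arguably more direct but relies on density of the finite element spaces (standard, though not stated explicitly in the paper), while the paper's version stays entirely within the quasi-interpolation estimates already assembled in Appendix~\ref{sec:quasi-interpolation-operator}.
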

\begin{proof}
The upper bound is proven in \Cref{lemma:decomposition-using-quasi-interpol-to-V-upper-bound}. Now we will prove the lower bound. Having the upper bound, we can use \Cref{lemma:upper_bound_infinite_convergence} to show that the sum $I_{V_{0}}w + \sum^{+ \infty }_{j=1} (I_{V_{j}} - I_{V_{j-1}}) w$ converges in $\left(H^1_0(\Omega), \| \nabla \cdot \| \right)$ and consequently, from \Cref{lemma:upper_bound_infinite} with $w_0 := I_{V_0}w$ and $w_j := (I_{V_{j}} - I_{V_{j-1}}) w$,
\begin{equation*}
c_S \left \lVert \nabla \left(  I_{V_{0}}w + \sum^{+ \infty }_{j=1} (I_{V_{j}} - I_{V_{j-1}}) w \right)  \right \rVert ^2 \leq  \| \nabla I_{V_0} w \|^2 + \sum^{+ \infty}_{j=1} \| h^{-1}_j (I_{V_j} w - I_{V_{j-1}} w ) \| ^2. 
\end{equation*}

It remains to show that $I_{V_{0}}w + \sum^{+ \infty }_{j=0} (I_{V_{j}} - I_{V_{j-1}}) w = w $ in $\left(H^1_0(\Omega), \| \nabla \cdot \| \right)$.
Since, for arbitrary $N \in \mathbb{N}$ (see \Cref{thm:quasi-interpolation-to_V-global-estimates}),
\begin{equation*}
\left \lVert w - \left( I_{V_{0}}w + \sum^{N}_{j=1} (I_{V_{j}} - I_{V_{j-1}}) w  \right) \right \rVert   = \| w - I_{V_{N}} w \| \leq C_{I_{V},2} \max_{K \in \T_N}h_{K} \| \nabla w\|,
\end{equation*}
and $\max_{K \in \T_N}h_{K} \to 0$,
$I_{V_{0}}w + \sum^{+ \infty }_{j=1} (I_{V_{j}} - I_{V_{j-1}}) w = w$ in $L^2(\Omega)$. We will show by contradiction that $I_{V_{0}}w + \sum^{+ \infty }_{j=1} (I_{V_{j}} - I_{V_{j-1}}) w = w$ also in $\left(H^1_0(\Omega), \| \nabla \cdot \| \right)$. Let the sequence $I_{V_{0}}w + \sum^{N}_{j=1} (I_{V_{j}} - I_{V_{j-1}}) w$ converge in $\left(H^1_0(\Omega), \| \nabla \cdot \| \right)$ to \mbox{$\bar{w} \neq w$}. Then, thanks to Friedrich's inequality (\Cref{lemma:Friedrichs}) the sequence converges to $\bar{w}$ in $L^2(\Omega)$, which is a contradiction with the uniqueness of the limit.
\end{proof}

\begin{thm}\label{thm:stable_splitting_to_V}
Let $c_S$ be the constant from 
\Cref{lemma:upper_bound_infinite}.
There exists a constant $C_{S}>0$ depending only on~$d$ and $\gamma_0$ such that for all $w \in H^1_0(\Omega)$,
\begin{equation}
\label{eq:stable_spliting_H1_coarsest}
c_{S} \| \nabla w \|^2 \leq 
\inf_{  w_j \in V_j ;\ w=\sum^{+\infty}_{j=0} w_j } 
\| \nabla  w_0 \|^2  +  \sum^{ + \infty}_{j=1}   \| h^{-1}_{j} w_j \| ^2 \leq C_{S} \| \nabla w \|^2.
\end{equation}
\end{thm}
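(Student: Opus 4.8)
The plan is to derive both inequalities directly from the results already collected in Appendix~\ref{sec:splitting}: the upper bound from the explicit quasi-interpolation decomposition, and the lower bound from the Strengthened Cauchy--Schwarz estimate. No genuinely new analysis is needed; the proof is a combination of \Cref{thm:decomposition-using-quasi-interpol-to-V}, \Cref{lemma:decomposition-using-quasi-interpol-to-V-upper-bound}, and \Cref{lemma:upper_bound_infinite}.

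For the upper bound, given $w \in H^1_0(\Omega)$ I would exhibit one admissible decomposition and estimate the infimum by it. Take $w_0 := I_{V_0}w \in V_0$ and $w_j := (I_{V_j}-I_{V_{j-1}})w$ for $j \geq 1$; since $V_{j-1}\subset V_j$ we have $w_j \in V_j$, and by \Cref{thm:decomposition-using-quasi-interpol-to-V} the series $\sum_{j\geq0}w_j$ converges to $w$ in $(H^1_0(\Omega),\|\nabla\cdot\|)$, so this decomposition is admissible in the infimum. Then \eqref{eq:stables_splitting_quasi_to_V} gives
\[
\|\nabla w_0\|^2 + \sum_{j=1}^{+\infty}\|h^{-1}_j w_j\|^2 \leq C_{S,I_V}\|\nabla w\|^2,
\]
hence the infimum is at most $C_{S,I_V}\|\nabla w\|^2$, and one may set $C_S := C_{S,I_V}$, which depends only on $d$ and $\gamma_0$ by \Cref{lemma:decomposition-using-quasi-interpol-to-V-upper-bound}.

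For the lower bound, let $(w_j)_{j=0}^{+\infty}$ with $w_j\in V_j$ be an arbitrary decomposition for which $\sum_{j\geq0}w_j$ converges to $w$ in $(H^1_0(\Omega),\|\nabla\cdot\|)$. If $\|\nabla w_0\|^2 + \sum_{j\geq1}\|h^{-1}_j w_j\|^2 = +\infty$, the claimed inequality is immediate; otherwise \Cref{lemma:upper_bound_infinite} applies to this sequence and yields
\[
\|\nabla w\|^2 = \left\| \nabla\Big(\sum_{j=0}^{+\infty}w_j\Big)\right\|^2 \leq \frac{1}{c_S}\left(\|\nabla w_0\|^2 + \sum_{j=1}^{+\infty}\|h^{-1}_j w_j\|^2\right).
\]
Rearranging gives $c_S\|\nabla w\|^2 \leq \|\nabla w_0\|^2 + \sum_{j\geq1}\|h^{-1}_j w_j\|^2$ for every admissible decomposition, and taking the infimum over all of them proves the lower bound; here $c_S$ is the constant from \Cref{lemma:upper_bound_finite}, depending only on $d$ and $\gamma_0$.

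The only points requiring attention — rather than a real obstacle — are that the infimum in the statement ranges precisely over the $\|\nabla\cdot\|$-convergent decompositions, which is exactly the class for which \Cref{lemma:upper_bound_infinite} is stated, and that the inherited constants $C_{S,I_V}$ and $c_S$ carry no dependence on the quasi-uniformity of $\T_0$ or on the ratio $h_\Omega/\min_{K\in\T_0}h_K$. The heavy lifting — the $K$-functional estimates and the Strengthened Cauchy--Schwarz inequality — has already been done in the preceding lemmas, so the present proof is short.
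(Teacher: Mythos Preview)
Your proposal is correct and follows essentially the same approach as the paper: the upper bound is obtained by exhibiting the quasi-interpolation decomposition from \Cref{thm:decomposition-using-quasi-interpol-to-V} (yielding $C_S\le C_{S,I_V}$), and the lower bound is a direct application of \Cref{lemma:upper_bound_infinite} to an arbitrary admissible decomposition. Your write-up is in fact slightly more explicit than the paper's, handling the case of an infinite-valued sum separately, but the argument is the same.
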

\begin{proof}
From \Cref{thm:decomposition-using-quasi-interpol-to-V} we know that for any $w\in H^1_0(\Omega)$ there exists a decomposition $w=\sum^{+\infty}_{j=0}w_j$, $w_j \in V_j$, $j\in\mathbb{N}_0$, for which the upper bound holds with the constant $C_{S,I_V}$, so that we can take an infimum over all possible decompositions giving \mbox{$C_{S}\leq C_{S,I_V}$}.
The lower bound in \eqref{eq:stable_spliting_H1_coarsest} follows from \Cref{lemma:upper_bound_infinite}.
\end{proof}
\subsection{Splitting of $H^1_0(\Omega)$ into basis function spaces}
This section presents a result on splitting a $H^1_0(\Omega)$-function into basis function spaces.
Denote by $V_{j,i}$, $j=1,2\ldots$, $i=1,\ldots,\#\K_j$, the space spanned by the basis function~$\phi^{(j)}_i$, $V_{j,i} \subset V_j$.

First we will show that splitting a function $w_j \in V_j$ into the basis function spaces $V_{j,i}$, $i=1,\ldots,\#\K_j$ is stable.
This property is called stability of basis functions in the literature; see, e.g., \cite[Definition~2.5.5]{Ruede1993} and \cite[Assumption~(A1), p.~17]{Oswald1994a}. We present this property in a form which suits our further development.

\begin{lemma}[Stability of basis functions]\label{lemma:stability_of_basis_functions}
There exist positive constants $c_{B}$ and $C_{B}$ 
depending only on~$d$ and $\gamma_0$ such that for all \mbox{$j\in \mathbb{N}_0$} and all
\begin{equation*}
w_j = \sum^{\#\K_j}_{i=1} w_{j,i} \in V_j, \quad w_{j,i} \in V_{j,i},  \quad
i=1,\ldots, \#\K_j,
\end{equation*}
it holds that
\begin{align}
c_{B} \| h^{-1}_j w_j \|^2 & \leq \sum^{\#\K_j}_{i=1} \|\nabla w_{j,i} \|^2 \leq C_{B} \| h^{-1}_j w_j \| ^2.
\label{eq:stability_of_basis_functions1}
\end{align}
Let $\mMs_j$ be the so-called scaled mass matrix and $\matrx{D}_j$ the diagonal matrix defined as 
\begin{equation*}
    \left[ \mMs_j \right]_{m,n} = \int_{\Omega} h^{-2}_j \phi^{(j)}_n \phi^{(j)}_m, \qquad 
    \left[ \matrx{D}_j \right]_{m,m} = \int_{\Omega} \nabla \phi^{(j)}_m \cdot \nabla \phi^{(j)}_m , \qquad
    \forall m,n = 1, \ldots, \#\K_j.
\end{equation*}
Let $\vec{w}_j$ be the vector of coefficients of a function $w_j\in V_j$ in the basis $ \Phi_j$. Then $w_j = \sum_{i=1}^{\#\K_j} w_{j,i}$, $w_{j,i} = [\vec{w}_j]_i \phi^{(j)}_i$ and \eqref{eq:stability_of_basis_functions1} is equivalent to 
\begin{equation}\label{eq:stable_basis_lemma}
c_{B}  \vecT{w}_j \matrx{M}^S_{j} \vec{w}_j  \leq \vecT{w}_j \matrx{D}_j \vec{w}_j  \leq C_{B} \vecT{w}_j \matrx{M}^S_{j} \vec{w}_j.    
\end{equation}
That is, the matrices $\mMs_j$ and $\matrx{D}_j$ are spectrally equivalent with constants $c_B$ and $C_B$.
\end{lemma}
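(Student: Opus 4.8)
The plan is to establish the basis-function stability $c_B\|h_j^{-1}w_j\|^2\le\sum_i\|\nabla w_{j,i}\|^2\le C_B\|h_j^{-1}w_j\|^2$ by a standard scaling/finite-element argument reducing everything to a reference patch, and then translate the result into the matrix form. Because the meshes are obtained by uniform dyadic refinement, the shape-regularity constants $\gamma_j$ are controlled by $\gamma_0$ (as recalled in Section~\ref{sec:multilevel}), so it suffices to prove the estimate with constants depending only on $d$ and $\gamma_j$; the final constants then depend only on $d$ and $\gamma_0$.

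First I would reduce to a single element. For a fixed $w_j=\sum_i w_{j,i}\in V_j$ with $w_{j,i}=[\vec w_j]_i\phi_i^{(j)}$, write $\sum_{K\in\T_j}$ over elements and use that on each $K$ only the $d+1$ basis functions associated with the vertices of $K$ are nonzero. The lower bound $c_B\|h_j^{-1}w_j\|^2\le\sum_i\|\nabla w_{j,i}\|^2$: on a single element $K$, $\|h_j^{-1}w_j\|_K^2 = h_K^{-2}\vec{a}^\top \matrx{M}_{j,K}\vec{a}$ where $\vec a$ collects the $d+1$ local nodal values and $\matrx{M}_{j,K}$ is the local (unscaled) mass matrix, while $\sum_{i\in\N_K}\|\nabla w_{j,i}\|_K^2 = \sum_{i\in\N_K}a_i^2\|\nabla\phi_i^{(j)}\|_K^2$. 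A scaling argument to the reference simplex shows $\|\nabla\phi_i^{(j)}\|_K^2\asymp h_K^{d-2}$ and $\matrx{M}_{j,K}\asymp h_K^{d}\,\matrx{M}_{\mathrm{ref}}$, with equivalence constants depending only on $d$ and $\gamma_j$; since on the reference element the $\ell^2$-norm of the local coefficient vector is equivalent to $\vec a^\top\matrx M_{\mathrm{ref}}\vec a$ (both being positive-definite quadratic forms in finitely many dimensions), we get the two-sided bound elementwise. Summing over $K\in\T_j$ — and noting each interior node lies in a bounded number of elements, again controlled by $\gamma_j$ — yields \eqref{eq:stability_of_basis_functions1}.

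For the upper bound $\sum_i\|\nabla w_{j,i}\|^2\le C_B\|h_j^{-1}w_j\|^2$ the same elementwise comparison applies, but one must be careful that the global summation does not lose control: since both sides are sums of elementwise contributions that are each equivalent (with $d,\gamma_j$-constants) via the reference-element argument, no overlap-counting issue arises for this direction. Then I would record the algebraic reformulation: with $\vec w_j$ the coefficient vector, $\|h_j^{-1}w_j\|^2=\vecT{w}_j\mMs_j\vec w_j$ by definition of $\mMs_j$ (as already computed in \eqref{eq:evaluate_term_fine_levels_expression}), and $\sum_i\|\nabla w_{j,i}\|^2 = \sum_i [\vec w_j]_i^2\|\nabla\phi_i^{(j)}\|^2 = \vecT{w}_j\matrx D_j\vec w_j$ by definition of $\matrx D_j$; substituting these into \eqref{eq:stability_of_basis_functions1} gives \eqref{eq:stable_basis_lemma}, and since $\vec w_j$ ranges over all of $\R^{\#\K_j}$ this is exactly the spectral equivalence of $\mMs_j$ and $\matrx D_j$.

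The main obstacle is purely bookkeeping: making the reference-element scaling clean for a nodal basis — tracking that $\|\nabla\phi_i^{(j)}\|_K^2$ scales like $h_K^{d-2}$ with constants depending only on shape regularity, and that the local mass matrix scales like $h_K^d$ times a fixed positive-definite matrix — and then confirming that the global sums inherit the equivalence with the bounded-overlap count absorbed into the constants. There is no genuine analytic difficulty; the result is a finite-dimensional norm equivalence on each element, made uniform across the hierarchy by uniform refinement. I would cite the relevant scaling lemmas (e.g.\ \cite{Ciarlet1978,Brenner2007,Elman2005}) rather than redo the affine-transformation computations in detail.
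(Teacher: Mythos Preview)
Your approach is correct and essentially matches the paper's proof: both reduce to elementwise spectral equivalence via affine transformation to a reference simplex, then sum over elements (and indeed no overlap counting is needed, since both $\|h_j^{-1}w_j\|^2$ and $\sum_i\|\nabla w_{j,i}\|^2$ decompose exactly as sums of element contributions). The only cosmetic difference is that the paper bounds the local gradient norms $\|\nabla\phi_m^{(j)}\|_K^2$ by invoking the named Inverse and Friedrich's inequalities (\Cref{lemma:local_inverse_ineq}, \Cref{lemma:Friedrichs}) rather than a direct scaling argument, arriving at the same two-sided comparison with $|K|/h_K^2\asymp h_K^{d-2}$.
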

\begin{proof} The proof is inspired by \cite[Proposition~1.30, Problem~1.35]{Elman2005}; see also \cite{Fri72}.
We prove the spectral equivalence of local matrices associated with a mesh element. The assertion of the theorem for global matrices then follows by summing the local inequalities over the elements and taking the overlap into account.

Let $\mMsKj{j}$ be a local scaled mass matrix corresponding to an element $K \in \T_j$ defined as
\begin{equation}
\left[ \mMsKj{j} \right]_{m,n} = \int_{K}  h^{-2}_K \phi^{(j)}_n \phi^{(j)}_m, \qquad \forall m,n \in \N_K,
\end{equation}
and let $\matrx{M}^{\mathrm{S}}_{\hat{K}}$ be the local scaled mass matrix on a reference element $\hat{K}$, which does not depend on~$j$, $K$, or~$\T_j$.
Using standard arguments of affine transformation to a reference element, it holds that
\begin{equation}
\mMsKj{j} = 
\frac{\meas{K}}{h^2_K}
\matrx{M}^{\mathrm{S}}_{\hat{K}}.
\end{equation}
If we denote by $c_{\hat{K}}$ and 
$C_{\hat{K}}$ the smallest and the largest eigenvalues of~$\matrx{M}^{\mathrm{S}}_{\hat{K}}$, respectively, 
the eigenvalues of $\mMsKj{j}$ can be bounded by $c_{\hat{K}} {\meas{K}}/{h^2_K}$ and $C_{\hat{K}} {\meas{K}}/{h^2_K}$. Consequently,
\begin{equation}
\label{eq:locscmassmatrixeigbounds}
\frac{ c_{\hat{K}} \meas{K}}{h^2_K} \vecT{x} \vec{x} \leq \vecT{x} \mMsKj{j} \vec{x} \leq \frac{ C_{\hat{K}} \meas{K}}{h^2_K} \vecT{x} \vec{x}, \quad \forall \vec{x} \in \R^{(d+1)}.
\end{equation}
By choosing $\vec{x}$ as the $m$th column of the identity matrix of size $d+1$,
\begin{equation}
\label{eq:locscmassmatrixeigbounds_basisfuncnorm}
\frac{ c_{\hat{K}} \meas{K}}{h^2_K} \leq \left[ \mMsKj{j}\right] _{m,m} = \frac{\| \phi^{(j)}_m \|^2_K}{h^2_K}  \leq \frac{ C_{\hat{K}} \meas{K}}{h^2_K}.
\end{equation}

Let $\matrx{D}_{j,K}$ be the local variant of $\matrx{D}_j$, i.e.,
\begin{align}
    \left[ \matrx{D}_{j,K} \right]_{m,m} &= \int_{K} \nabla \phi^{(j)}_m \nabla \phi^{(j)}_m = \| \nabla  \phi^{(j)}_m \|^2_K.
\end{align}
Using the inverse inequality~(\Cref{lemma:local_inverse_ineq}) and \eqref{eq:locscmassmatrixeigbounds_basisfuncnorm},
\begin{equation}
    \| \nabla \phi^{(j)}_m \|^2_K \leq C^2_{\INV} h^{-2}_K \| \phi^{(j)}_m \|^2_K\leq C^2_{\INV} C_{\hat{K}} \frac{|K|}{h^{2}_K}.
\end{equation}
Similarly, using Friedrich's inequality~(\Cref{lemma:Friedrichs}),
\begin{equation}
    \frac{c_{\hat{K}} |K|}{C^2_{F} h^{2}_K}  \leq  \frac{1}{C^2_{F} h^{2}_K} \| \phi^{(j)}_m \|^2_K \leq 
    \| \nabla \phi^{(j)}_m \|^2_K. 
\end{equation}
Thus the matrix $\matrx{D}_{j,K}$ is spectrally equivalent to the identity matrix times $\frac{|K|}{h^{2}_K}$. From \eqref{eq:locscmassmatrixeigbounds}, we conclude that $\matrx{D}_{j,K}$ is also spectrally equivalent to $\mMsKj{j}$ with the equivalency constants involving $C_{\INV}$, $C_{F}$, $c_{\hat{K}}$, and $C_{\hat{K}}$, i.e.,~depending only on~$d$ and the shape regularity~$\gamma_j$.
\end{proof}
Since  $\mMs_j$ and $\matrx{D}_j$ are spectrally equivalent matrices and they are symmetric positive definite, we can use the generalized Hermitian eigenvalue decomposition (see, e.g., \cite[Eq.~(5.3)]{BookEigproblems00}) and algebraic manipulations to show that   $\left( \mMs_j \right)^{-1}$ and $\matrx{D}^{-1}_j$ are also spectrally equivalent, i.e., 
\begin{equation}\label{eq:stable_basis_matrix_equivalence_inverse}
    \frac{1}{C_B} \vecT{w} \left( \mMs_j \right)^{-1} \vec{w} \leq \vecT{w} \matrx{D}^{-1}_j \vec{w}  \leq \frac{1}{c_B} \vecT{w} \left( \mMs_j \right)^{-1} \vec{w},
    \qquad \forall \vec{w} \in \mathbb{R}^{\#\K_j}.
\end{equation}
Let $\matrx{M}_j$ denote the mass matrix associated with the $j$th level, i.e.,  $\left[ \matrx{M}_j \right]_{mn} = \int_{\Omega} \phi^{(j)}_n \phi^{(j)}_m$, $m,n=1,\ldots, \# \K_j$. 
Analogously to \eqref{eq:stable_basis_lemma} we can show the spectral equivalence of the mass matrix $\matrx{M}_j$ with the diagonal matrix $\matrx{D}_j$ in the following form. There exist positive constants $c_{M}, C_{M}$ depending only on~$d$ and $\gamma_0$ such that 
\begin{equation}
\label{eq:massspeceq}
c_{M}  \min_{K \in \T_j} h^{-2}_K \vecT{w} \mM_{j} \vec{w} \leq 
 \vecT{w} \matrx{D}_j  \vec{w} 
 \leq C_{M}
 \max_{K \in \T_j} h^{-2}_K \vecT{w} \mM_{j}  \vec{w} , \quad \forall  \vec{w}  \in \mathbb{R}^{\#\K_j}.
\end{equation}

Combining \Cref{thm:stable_splitting_to_V} and \Cref{lemma:stability_of_basis_functions} yields the following theorem on  splitting a $H^1_0(\Omega)$-function into basis function spaces. It can be proven by the same technique as in \cite[Theorem~2.3.1]{Ruede1993}.

\begin{thm}\label{thm:stable_splitting_into_basis_functions}
Let $c_S,C_{S}$ be the constants from \Cref{thm:stable_splitting_to_V}, $c_{B},C_{B}$ the constants from \Cref{lemma:stability_of_basis_functions}, and let $\overline{c}_{B} = \min \lbrace 1,c_{B} \rbrace$ and $\overline{C}_{B} = \max \lbrace 1,C_{B} \rbrace$. 
Then for all \mbox{$w \in H^1_0(\Omega)$,}
\begin{align}
c_{S} \overline{c}_{B} \| \nabla w \|^2 &\leq 
\inf_{  
\substack{ w_0 \in V_0, w_{j,i} \in V_{j,i}  \\ w = w_0 + \sum^{+\infty}_{j=1} \sum^{\#\K_j}_{i=1} w_{j,i} }}
\| \nabla  w_0 \|^2  +  \sum^{+\infty}_{j=1} \sum^{\#\K_j}_{i=1}  \|\nabla w_{j,i}\|^2  \leq C_{S}\overline{C}_{B} \| \nabla w \|^2.
\end{align}
\end{thm}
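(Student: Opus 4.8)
The plan is to obtain both inequalities by composing two results already available in the paper: the two-sided bound for splittings of $H^1_0(\Omega)$ into the subspaces $V_j$ (\Cref{thm:stable_splitting_to_V}) and the per-level stability of the nodal basis (\Cref{lemma:stability_of_basis_functions}). This is the technique of \cite[Theorem~2.3.1]{Ruede1993}; the only point requiring a little care is that a splitting into the one-dimensional spaces $V_{j,i}$ and a splitting into the spaces $V_j$ must be matched up consistently.

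For the \emph{lower bound} I would start from an arbitrary admissible decomposition $w = w_0 + \sum_{j\geq1}\sum_{i=1}^{\#\K_j} w_{j,i}$ with $w_0\in V_0$, $w_{j,i}\in V_{j,i}$, and group the contributions of each level, setting $w_j := \sum_{i=1}^{\#\K_j} w_{j,i}\in V_j$. This produces a $(H^1_0(\Omega),\|\nabla\cdot\|)$-convergent decomposition $w=\sum_{j\geq0}w_j$ into the spaces $V_j$. The lower bound in \eqref{eq:stability_of_basis_functions1} gives $c_B\|h^{-1}_j w_j\|^2\leq\sum_{i=1}^{\#\K_j}\|\nabla w_{j,i}\|^2$ for each $j\geq1$, so, using $\overline{c}_B=\min\{1,c_B\}$ and then the lower bound in \eqref{eq:stable_spliting_H1_coarsest},
\begin{equation*}
\|\nabla w_0\|^2 + \sum_{j\geq1}\sum_{i=1}^{\#\K_j}\|\nabla w_{j,i}\|^2
\;\geq\;\overline{c}_B\Bigl(\|\nabla w_0\|^2 + \sum_{j\geq1}\|h^{-1}_j w_j\|^2\Bigr)
\;\geq\;\overline{c}_B\,c_S\,\|\nabla w\|^2 .
\end{equation*}
Taking the infimum over all such decompositions yields the left inequality of the theorem.

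For the \emph{upper bound} I would exhibit one convenient decomposition. By \Cref{thm:decomposition-using-quasi-interpol-to-V} the choice $w_0:=I_{V_0}w$, $w_j:=(I_{V_j}-I_{V_{j-1}})w$ gives a convergent decomposition $w=\sum_{j\geq0}w_j$, $w_j\in V_j$, with $\|\nabla w_0\|^2+\sum_{j\geq1}\|h^{-1}_j w_j\|^2\leq C_{S,I_V}\|\nabla w\|^2$; since $C_S\leq C_{S,I_V}$ by \Cref{thm:stable_splitting_to_V}, this decomposition in particular respects the bound in \eqref{eq:stable_spliting_H1_coarsest} with constant $C_S$. Expanding each $w_j$, $j\geq1$, in the basis $\Phi_j$ and setting $w_{j,i}:=[\vec{w}_j]_i\,\phi^{(j)}_i\in V_{j,i}$, the upper bound in \eqref{eq:stability_of_basis_functions1} gives $\sum_{i=1}^{\#\K_j}\|\nabla w_{j,i}\|^2\leq C_B\|h^{-1}_j w_j\|^2$, whence, with $\overline{C}_B=\max\{1,C_B\}$,
\begin{equation*}
\|\nabla w_0\|^2 + \sum_{j\geq1}\sum_{i=1}^{\#\K_j}\|\nabla w_{j,i}\|^2
\;\leq\;\overline{C}_B\Bigl(\|\nabla w_0\|^2 + \sum_{j\geq1}\|h^{-1}_j w_j\|^2\Bigr)
\;\leq\;\overline{C}_B\,C_S\,\|\nabla w\|^2 .
\end{equation*}
Since this is an admissible decomposition, the infimum is at most $\overline{C}_B C_S\|\nabla w\|^2$, which is the right inequality.

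The step I expect to require the most care is the convergence bookkeeping: one must justify that the double series $\sum_{j\geq1}\sum_i w_{j,i}$ sums to $w-w_0$ in $(H^1_0(\Omega),\|\nabla\cdot\|)$ and that grouping it level by level reproduces exactly the $V_j$-decomposition fed into \Cref{thm:stable_splitting_to_V} and \Cref{thm:decomposition-using-quasi-interpol-to-V}. Because each level contributes only the finite sum $w_j=\sum_{i=1}^{\#\K_j}w_{j,i}$, summing over $i$ first is harmless, and convergence of the $j$-series in the energy norm follows from \Cref{lemma:upper_bound_infinite_convergence}; so this is a remark to be spelled out rather than a genuine obstacle.
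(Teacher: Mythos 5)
Your proof follows exactly the route the paper indicates (combining \Cref{thm:stable_splitting_to_V} with \Cref{lemma:stability_of_basis_functions}, in the style of R\"ude's Theorem~2.3.1), and the lower bound and the bookkeeping remarks are correct. The one slip is in the upper bound: you write that since $C_S \leq C_{S,I_V}$, the quasi-interpolation decomposition ``in particular respects the bound in \eqref{eq:stable_spliting_H1_coarsest} with constant $C_S$.'' That implication runs the wrong way. The quasi-interpolation decomposition is guaranteed only the bound with the (possibly larger) constant $C_{S,I_V}$; the relation $C_S \leq C_{S,I_V}$ does not transfer the bound down to $C_S$. As written, your chain therefore yields $\overline{C}_B\,C_{S,I_V}\|\nabla w\|^2$, not the asserted $\overline{C}_B\,C_S\|\nabla w\|^2$, unless one has chosen $C_S:=C_{S,I_V}$.

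The clean fix is to avoid committing to a single decomposition. Take an arbitrary $(H^1_0,\|\nabla\cdot\|)$-convergent decomposition $w=\sum_{j\geq0}w_j$ into the $V_j$; for each $j\geq1$ expand $w_j=\sum_i w_{j,i}$ in the basis and use the upper bound in \eqref{eq:stability_of_basis_functions1} to get
\begin{equation*}
\|\nabla w_0\|^2+\sum_{j\geq1}\sum_i\|\nabla w_{j,i}\|^2
\;\leq\;\overline{C}_B\Bigl(\|\nabla w_0\|^2+\sum_{j\geq1}\|h^{-1}_j w_j\|^2\Bigr).
\end{equation*}
The left-hand side dominates the infimum over $V_{j,i}$-decompositions, so taking the infimum over $V_j$-decompositions on the right and invoking \eqref{eq:stable_spliting_H1_coarsest} gives $\inf(\cdot)\leq\overline{C}_B\,C_S\|\nabla w\|^2$, as required. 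The rest of your argument stands unchanged.
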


\subsection{Splitting of spaces of piecewise linear functions}
\label{subsec:splitting_finite_dim}
We now present consequences of the previous theorems for finite-dimensional piecewise linear functions from~$V_J$, $J\geq 0$. The following theorems can be proven by the same techniques as the results in \cite[Section~2.4]{Ruede1993}.

\begin{thm}\label{thm:stable_splitting_of_V_J}
Let $c_{S}$ and $C_{S}$ be the constants from Theorem \ref{thm:stable_splitting_to_V}. Let $J\geq0$. For all $w_J \in V_J$,
\begin{align}
&c_{S} \| \nabla w_J \|^2 \leq 
\inf_{w_j\in V_j; \ w_J = \sum^{J}_{j=0}w_j } 
\| \nabla  w_0 \|^2  +  \sum^{J}_{j=1}   \| h^{-1}_{j} w_j \| ^2 \leq C_{S} \| \nabla w_J \|^2.
\end{align}
\end{thm}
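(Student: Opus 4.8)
## Proof Proposal for Theorem (Stable splitting of $V_J$)

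The plan is to derive the finite-level statement directly from the infinite-level result, \Cref{thm:stable_splitting_to_V}, by restricting attention to functions $w_J \in V_J \subset H^1_0(\Omega)$ and exploiting the nestedness $V_0 \subset V_1 \subset \cdots \subset V_J$. For the \emph{upper bound}, I would start from \Cref{thm:decomposition-using-quasi-interpol-to-V}: for $w_J \in V_J$, the quasi-interpolation decomposition gives $w_J = I_{V_0} w_J + \sum_{j=1}^{+\infty}(I_{V_j} - I_{V_{j-1}}) w_J$ with the upper bound $\|\nabla I_{V_0} w_J\|^2 + \sum_{j=1}^{+\infty} \|h_j^{-1}(I_{V_j}w_J - I_{V_{j-1}}w_J)\|^2 \le C_{S,I_V}\|\nabla w_J\|^2$. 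The key observation is that $I_{V_j}$ is a projection onto $V_j$ and, since $w_J \in V_J \subset V_j$ for $j \ge J$, we have $I_{V_j} w_J = w_J$ for all $j \ge J$; hence $(I_{V_j} - I_{V_{j-1}})w_J = 0$ for $j > J$. Thus the infinite decomposition collapses to a \emph{finite} one: setting $w_0 := I_{V_0} w_J$ and $w_j := (I_{V_j} - I_{V_{j-1}})w_J$ for $j = 1,\ldots,J$, we obtain $w_J = \sum_{j=0}^{J} w_j$ with $w_j \in V_j$, and the sum defining the splitting norm terminates at $j=J$ with value $\le C_{S,I_V}\|\nabla w_J\|^2$. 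Taking the infimum over all such finite decompositions and using $C_S \le C_{S,I_V}$ (as in the proof of \Cref{thm:stable_splitting_to_V}) yields the upper bound with constant $C_S$.

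For the \emph{lower bound}, I would invoke \Cref{lemma:upper_bound_finite} directly: given any finite decomposition $w_J = \sum_{j=0}^{J} w_j$ with $w_j \in V_j$, that lemma (valid for any $N \in \mathbb{N}_0$, here $N = J$) gives
\begin{equation*}
\|\nabla w_J\|^2 = \left\| \nabla\Big(\sum_{j=0}^{J} w_j\Big)\right\|^2 \le \frac{1}{c_S}\left(\|\nabla w_0\|^2 + \sum_{j=1}^{J}\|h_j^{-1} w_j\|^2\right).
\end{equation*}
Rearranging gives $c_S\|\nabla w_J\|^2 \le \|\nabla w_0\|^2 + \sum_{j=1}^{J}\|h_j^{-1} w_j\|^2$ for every admissible decomposition, and taking the infimum over all such decompositions preserves the inequality, yielding the lower bound.

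Since both bounds are now in hand, the two displayed inequalities of the theorem follow immediately with the stated constants $c_S$ and $C_S$ from \Cref{thm:stable_splitting_to_V}. I do not expect any genuine obstacle here: the entire content is repackaging the infinite-dimensional stable splitting into the finite setting, and the only subtlety — which is really the crux — is recognizing that the projection property $I_{V_j} w_J = w_J$ for $j \ge J$ truncates the infinite sum exactly, so no tail estimate or extra argument is needed. If one wished to avoid citing \Cref{thm:decomposition-using-quasi-interpol-to-V} for the upper bound, an alternative would be to build the telescoping decomposition via the quasi-interpolants $I_{V_j}$, $j=0,\ldots,J$, directly and invoke \Cref{lemma:decomposition-using-quasi-interpol-to-V-upper-bound} together with the same truncation observation; this is the route indicated by the remark that the theorem "can be proven by the same techniques as the results in \cite[Section~2.4]{Ruede1993}."
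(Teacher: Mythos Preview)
Your proposal is correct and matches the approach the paper indicates (the reference to \cite[Section~2.4]{Ruede1993}): the projection property $I_{V_j} w_J = w_J$ for $j \ge J$ truncates the quasi-interpolation decomposition for the upper bound, and \Cref{lemma:upper_bound_finite} with $N=J$ gives the lower bound directly. One minor wording issue: from $C_S \le C_{S,I_V}$ you cannot conclude the upper bound with the \emph{smaller} constant~$C_S$; rather, in the paper's proof of \Cref{thm:stable_splitting_to_V} the constant $C_S$ is established precisely via the quasi-interpolation decomposition (i.e., one may take $C_S = C_{S,I_V}$), so your bound with $C_{S,I_V}$ is already the bound with~$C_S$.
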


\begin{thm}\label{thm:stable_splitting_of_V_J_to_basis}
Let $c_{S}$ and $C_{S}$ be the constants from Theorem \ref{thm:stable_splitting_to_V} and $\overline{c}_{B},\overline{C}_{B}$ the constants from \Cref{thm:stable_splitting_into_basis_functions}. Let $J\geq0$. For all $w_J \in V_J$,
\begin{align}
c_{S} \overline{c}_{B} \| \nabla w_J \|^2 \leq 
\inf_{ \substack{ w_0 \in V_0, w_{j,i} \in V_{j,i}  \\ w_J = w_0 + \sum^{J}_{j=1} \sum^{\#\K_j}_{i=1} w_{j,i} } }
\| \nabla  w_0 \|^2  +  \sum^{J}_{j=1} \sum^{\#\K_j}_{i=1}  \|\nabla w_{j,i}\|^2
 \leq C_{S} \overline{C}_{B} \| \nabla w_J \|^2.
\end{align}
\end{thm}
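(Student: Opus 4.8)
Theorem~\ref{thm:stable_splitting_of_V_J_to_basis} combines the abstract frame/stable-splitting results already established for $H^1_0(\Omega)$ with the restriction to a finite hierarchy, so the plan is to reduce it to \Cref{thm:stable_splitting_into_basis_functions} together with the finite-level truncation argument already used in \Cref{thm:stable_splitting_of_V_J}.

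\textit{Upper bound.} Fix $w_J\in V_J\subset H^1_0(\Omega)$. Apply \Cref{thm:decomposition-using-quasi-interpol-to-V} to obtain the canonical decomposition $w_J = I_{V_0}w_J + \sum_{j=1}^{+\infty}(I_{V_j}-I_{V_{j-1}})w_J$. Since $w_J\in V_J$ and each $I_{V_j}$ is a projection onto $V_j$ with $V_J\subset V_j$ for $j\geq J$, the terms with $j>J$ vanish, so in fact $w_J = I_{V_0}w_J + \sum_{j=1}^{J}(I_{V_j}-I_{V_{j-1}})w_J$, a \emph{finite} decomposition into $V_0,\dots,V_J$. Now split each $(I_{V_j}-I_{V_{j-1}})w_J\in V_j$ into its basis-function components $w_{j,i}=[\vec w_j]_i\phi_i^{(j)}$ and bound $\sum_i\|\nabla w_{j,i}\|^2 \leq C_B\|h_j^{-1}(I_{V_j}-I_{V_{j-1}})w_J\|^2$ using \Cref{lemma:stability_of_basis_functions}. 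Summing over $j=1,\dots,J$ and adding the $j=0$ term $\|\nabla I_{V_0}w_J\|^2$ gives, with $\overline{C}_B=\max\{1,C_B\}$,
\[
\|\nabla I_{V_0}w_J\|^2 + \sum_{j=1}^{J}\sum_{i=1}^{\#\K_j}\|\nabla w_{j,i}\|^2 \;\leq\; \overline{C}_B\Big(\|\nabla I_{V_0}w_J\|^2 + \sum_{j=1}^{J}\|h_j^{-1}(I_{V_j}-I_{V_{j-1}})w_J\|^2\Big) \;\leq\; \overline{C}_B C_{S,I_V}\|\nabla w_J\|^2,
\]
where the last step is the upper bound in \Cref{thm:decomposition-using-quasi-interpol-to-V} (and $C_S\leq C_{S,I_V}$). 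Taking the infimum over all admissible finite decompositions yields the upper bound with constant $C_S\overline{C}_B$.

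\textit{Lower bound.} Given an arbitrary decomposition $w_J = w_0 + \sum_{j=1}^{J}\sum_{i=1}^{\#\K_j}w_{j,i}$ with $w_0\in V_0$, $w_{j,i}\in V_{j,i}$, set $w_j := \sum_{i=1}^{\#\K_j}w_{j,i}\in V_j$, so that $w_J=\sum_{j=0}^{J}w_j$ is an admissible decomposition for \Cref{thm:stable_splitting_of_V_J}. By the lower bound of \Cref{lemma:stability_of_basis_functions}, $c_B\|h_j^{-1}w_j\|^2 \leq \sum_i\|\nabla w_{j,i}\|^2$ for each $j\geq1$, hence with $\overline{c}_B=\min\{1,c_B\}$,
\[
\overline{c}_B\Big(\|\nabla w_0\|^2 + \sum_{j=1}^{J}\|h_j^{-1}w_j\|^2\Big) \;\leq\; \|\nabla w_0\|^2 + \sum_{j=1}^{J}\sum_{i=1}^{\#\K_j}\|\nabla w_{j,i}\|^2 .
\]
Applying the lower bound of \Cref{thm:stable_splitting_of_V_J} to the left-hand side gives $c_S\overline{c}_B\|\nabla w_J\|^2$ as a lower bound, and since the decomposition was arbitrary this carries over to the infimum.

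The argument is essentially a bookkeeping exercise once the two ingredients are in place; the only point requiring mild care is the \emph{truncation} observation that the canonical quasi-interpolation decomposition of a function in $V_J$ terminates at level $J$ (so the infinite-dimensional results transfer verbatim to a finite sum with no extra error term), and the tracking of the combined constants $\overline{c}_B,\overline{C}_B$ so that the final equivalence constants depend only on $d$ and $\gamma_0$ via $c_S,C_S,c_B,C_B$. No new estimate is needed beyond \Cref{thm:stable_splitting_of_V_J}, \Cref{thm:decomposition-using-quasi-interpol-to-V}, and \Cref{lemma:stability_of_basis_functions}.
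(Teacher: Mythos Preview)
Your proof is correct and follows essentially the approach the paper intends (the paper does not give an explicit proof but defers to the technique in \cite[Section~2.4]{Ruede1993}, which is exactly this combination of the subspace stable splitting with \Cref{lemma:stability_of_basis_functions}). One cosmetic remark: for the upper bound you may invoke \Cref{thm:stable_splitting_of_V_J} directly---pick any finite decomposition $w_J=\sum_{j=0}^J w_j$ with $\|\nabla w_0\|^2+\sum_{j=1}^J\|h_j^{-1}w_j\|^2$ close to the infimum, then split each $w_j$ into basis functions---which yields the constant $C_S\overline{C}_B$ without passing through $C_{S,I_V}$ and avoids the slightly awkward parenthetical ``and $C_S\leq C_{S,I_V}$''.
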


\subsection{Frame}\label{subsec:frame}
Finally, we present a consequence of the stability of the splittings presented in \Cref{thm:stable_splitting_into_basis_functions}, which is closely related to the fact that the normalized basis functions form a so-called \emph{frame in}~$\left( H^1_0(\Omega)\right)^{\#}$; see, e.g., \cite[Section~3]{Harbrecht2016}, \cite{Harbrecht2008}. 

\begin{thm}\label{thm:frame_infinite}
Let $c_{S}$ and $C_{S}$ be the constants from Theorem \ref{thm:stable_splitting_to_V} and $\overline{c}_{B},\overline{C}_{B}$ the constants from \Cref{thm:stable_splitting_into_basis_functions}.
For all $g\in \left( H^1_0(\Omega)\right)^{\#}$,
\begin{align}\label{eq:frame_infinite}
c_{S} \overline{c}_{B} \! \left(
\| \nabla  g_0 \|^2  +  \sum^{+\infty}_{j=1} \sum^{\#\K_j}_{i=1}  \frac{  \langle g,\phi^{(j)}_i  \rangle ^2 }{ \| \nabla \phi^{(j)}_i  \| ^2} \right)
&\leq
\| g\|^2_{\left( H^1_0(\Omega)\right)^{\#}} \leq C_{S} \overline{C}_{B} \! \left(
\| \nabla  g_0 \|^2  +  \sum^{+\infty}_{j=1} \sum^{\#\K_j}_{i=1}  \frac{  \langle g,\phi^{(j)}_i   \rangle^2 }{ \| \nabla \phi^{(j)}_i  \| ^2} \right)\!,
\end{align}
where $g_0\in V_0$ is the Riesz representation of the functional~$g$ in the space~$V_0$ with respect to the inner product $(u_0, v_0)_0 = \int_{\Omega}\nabla v_0 \cdot \nabla u_0$, $\forall u_0, v_0 \in V_0$.
\end{thm}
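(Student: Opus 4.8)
The plan is to derive the frame inequality purely by duality from the stability of the subspace splitting in Theorem~\ref{thm:stable_splitting_into_basis_functions} and its finite-level counterpart Theorem~\ref{thm:stable_splitting_of_V_J_to_basis}. Write $H=H^1_0(\Omega)$ equipped with the inner product $(\nabla\cdot,\nabla\cdot)$, let $u\in H$ be the Riesz representative of $g$, so that $\langle g,v\rangle=(\nabla u,\nabla v)$ for all $v\in H$ and $\|g\|_{\left(H^1_0(\Omega)\right)^{\#}}=\|\nabla u\|$, and let $P_0$ and $P_{j,i}$ denote the $H$-orthogonal projections onto $V_0$ and onto $V_{j,i}=\mathrm{span}\{\phi^{(j)}_i\}$, respectively. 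First I would record the two elementary identifications $\|\nabla g_0\|^2=\|\nabla P_0u\|^2$ (since $g_0$ is by definition the Riesz representative of $g$ on $V_0$, equivalently $g_0=P_0u$) and $\langle g,\phi^{(j)}_i\rangle^2/\|\nabla\phi^{(j)}_i\|^2=\|\nabla P_{j,i}u\|^2$, the latter from the explicit formula $P_{j,i}u=\bigl(\langle g,\phi^{(j)}_i\rangle/\|\nabla\phi^{(j)}_i\|^2\bigr)\phi^{(j)}_i$. After this reduction, the assertion becomes exactly the two-sided bound $c_S\overline{c}_B\bigl(\|\nabla P_0u\|^2+\sum_{j,i}\|\nabla P_{j,i}u\|^2\bigr)\le\|\nabla u\|^2\le C_S\overline{C}_B\bigl(\|\nabla P_0u\|^2+\sum_{j,i}\|\nabla P_{j,i}u\|^2\bigr)$.

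For the upper bound $\|g\|^2_{\left(H^1_0(\Omega)\right)^{\#}}\le C_S\overline{C}_B(\cdots)$, I would fix an arbitrary $w\in H^1_0(\Omega)$ and an arbitrary $(H^1_0(\Omega),\|\nabla\cdot\|)$-convergent decomposition $w=w_0+\sum_{j,i}w_{j,i}$ with $w_0\in V_0$, $w_{j,i}\in V_{j,i}$. Continuity of $g$ gives $\langle g,w\rangle=\langle g,w_0\rangle+\sum_{j,i}\langle g,w_{j,i}\rangle$; writing $w_{j,i}=\alpha_{j,i}\phi^{(j)}_i$ one has $|\langle g,w_{j,i}\rangle|=\bigl(|\langle g,\phi^{(j)}_i\rangle|/\|\nabla\phi^{(j)}_i\|\bigr)\|\nabla w_{j,i}\|$, and together with $|\langle g,w_0\rangle|\le\|\nabla g_0\|\,\|\nabla w_0\|$ and the Cauchy--Schwarz inequality in $\ell^2$ over the index set this yields
\[
\langle g,w\rangle\le\Bigl(\|\nabla g_0\|^2+\sum_{j,i}\tfrac{\langle g,\phi^{(j)}_i\rangle^2}{\|\nabla\phi^{(j)}_i\|^2}\Bigr)^{1/2}\Bigl(\|\nabla w_0\|^2+\sum_{j,i}\|\nabla w_{j,i}\|^2\Bigr)^{1/2}.
\]
Taking the infimum over all such decompositions and invoking the upper bound of Theorem~\ref{thm:stable_splitting_into_basis_functions}, then dividing by $\|\nabla w\|$ and taking the supremum over $w$, gives the claim.

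For the lower bound $c_S\overline{c}_B(\cdots)\le\|g\|^2_{\left(H^1_0(\Omega)\right)^{\#}}$, I would test the dual norm against truncated sums of the projections. Set $S_N:=\|\nabla P_0u\|^2+\sum_{j\le N,\,i}\|\nabla P_{j,i}u\|^2$ and $\hat u_N:=P_0u+\sum_{j\le N,\,i}P_{j,i}u\in V_N$. Because the $P$'s are $H$-orthogonal, $\langle g,\hat u_N\rangle=(\nabla u,\nabla\hat u_N)=S_N$, so $S_N\le\|g\|_{\left(H^1_0(\Omega)\right)^{\#}}\|\nabla\hat u_N\|$; on the other hand the displayed decomposition of $\hat u_N$ is an admissible competitor in Theorem~\ref{thm:stable_splitting_of_V_J_to_basis}, whence $c_S\overline{c}_B\|\nabla\hat u_N\|^2\le S_N$. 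Combining the two gives $c_S\overline{c}_BS_N\le\|g\|^2_{\left(H^1_0(\Omega)\right)^{\#}}$ uniformly in $N$, and letting $N\to\infty$ establishes both the convergence of the infinite frame series in $(H^1_0(\Omega),\|\nabla\cdot\|)$ and the desired inequality.

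The main technical point, deserving care rather than being a genuine obstacle, is the passage between the two formulations of stability: Theorems~\ref{thm:stable_splitting_into_basis_functions} and~\ref{thm:stable_splitting_of_V_J_to_basis} control the infimum of $\sum\|\text{components}\|^2$ over decompositions, while the frame bound is naturally expressed through the orthogonal projections. The bridge is the observation that the decomposition assembled from the projections is always one admissible competitor in that infimum, combined with the finite-level truncation above. This is precisely the mechanism used in \cite[Theorem~2.3.1]{Ruede1993}, so no new idea beyond organizing the truncation and the two applications of the Cauchy--Schwarz inequality is required.
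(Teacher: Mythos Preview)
Your proposal is correct and follows essentially the same route as the paper's proof: the upper bound via an arbitrary decomposition of $w$, Cauchy--Schwarz in $\ell^2$, and the upper inequality of \Cref{thm:stable_splitting_into_basis_functions}; the lower bound via the truncated test function $\hat u_N$ built from the projections, the identity $\langle g,\hat u_N\rangle=S_N$, and the lower inequality of \Cref{thm:stable_splitting_of_V_J_to_basis}. The only difference is notational: the paper works directly with $g_0$ and the functions $g_{j,i}=\langle g,\phi^{(j)}_i\rangle\phi^{(j)}_i/\|\nabla\phi^{(j)}_i\|^2$, which are precisely your $P_0u$ and $P_{j,i}u$, without introducing the global Riesz representative $u$.
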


\begin{proof}
The proof is inspired by the proof of \cite[Theorem 2.6.2]{Ruede1993}.

We will start with the upper bound. Let $w\in H^1_0(\Omega)$ and consider an arbitrary decomposition $w = w_0 + \sum^{+\infty}_{j=1} \sum^{\#\K_j}_{i=1} w_{j,i}$, $w_0 \in V_0, w_{j,i} \in V_{j,i}$. Using the fact that 
\[
w_{i,j} =  \frac{ \sign(w_{i,j}) \left \lVert \nabla w_{i,j} \right \rVert }{\| \nabla \phi^{(j)}_i \| } \ \phi^{(j)}_i, 
\]
we have
\begin{align*}
    |\langle g,w \rangle|  &\leq  |\langle g,w_0 \rangle| + \sum^{+\infty}_{j=1} \sum^{\#\K_j}_{i=1} |\langle g,w_{i,j} \rangle| \\
    & \leq  \| g_0 \| \cdot \| \nabla w_0 \| + \sum^{+\infty}_{j=1} \sum^{\#\K_j}_{i=1} \left| \left \langle g, \frac{ \phi^{(j)}_i}{\| \nabla \phi^{(j)}_i\|}\right \rangle \right| \cdot \|\nabla w_{i,j} \| \\
    & \leq  \left(
\| \nabla  g_0 \|^2  +  \sum^{+\infty}_{j=1} \sum^{\#\K_j}_{i=1}  \frac{  \langle g,\phi^{(j)}_i   \rangle^2 }{ \| \nabla \phi^{(j)}_i  \| ^2} \right) ^{\frac{1}{2}} \cdot   \left( \| \nabla  w_0 \|^2  +  \sum^{+\infty}_{j=1} \sum^{\#\K_j}_{i=1}  \|\nabla w_{j,i}\|^2  \right)^{\frac{1}{2}}.
\end{align*}
Taking the infimum over all decompositions $w = w_0 + \sum^{+\infty}_{j=1} \sum^{\#\K_j}_{i=1} w_{j,i}$, $w_0 \in V_0, w_{j,i} \in V_{j,i}$ and using the stability of the decomposition into spaces defined by basis functions (\Cref{thm:stable_splitting_into_basis_functions}) yields
\begin{equation*}
 |\langle g,w \rangle| \leq  \left(
\| \nabla  g_0 \|^2  +  \sum^{+\infty}_{j=1} \sum^{\#\K_j}_{i=1}  \frac{ \langle g,\phi^{(j)}_i  \rangle^2 }{ \| \nabla \phi^{(j)}_i  \| ^2} \right) ^{\frac{1}{2}} \cdot  C_{S}^{\frac{1}{2}}\overline{C}_{B}^{\frac{1}{2}} \| \nabla w \|.
\end{equation*}
Taking the supremum over all $w\in H^1_0(\Omega)$ such that $\| \nabla w\| = 1$ gives the upper bound.

Proving the lower bound is more subtle. We will first show that for any $N\in\mathbb{N}$,
\begin{equation}\label{eq:thm_frame_lower_finite}
\| \nabla  g_0 \|^2  +  \sum^{N}_{j=1} \sum^{\#\K_j}_{i=1}  \frac{  \langle g,\phi^{(j)}_i   \rangle^2 }{ \| \nabla \phi^{(j)}_i  \| ^2}  
\leq \frac{1}{c_S \overline{c}_{B}} \| g \|^2_{\left( H^1_0(\Omega)\right)^{\#}}. 
\end{equation}

First, it holds that
\begin{multline*}
\| \nabla  g_0 \|^2  +  \sum^{N}_{j=1} \sum^{\#\K_j}_{i=1}  \frac{ \langle g,\phi^{(j)}_i  \rangle^2 }{ \| \nabla \phi^{(j)}_i  \| ^2}  
 = \left \langle g , g_0 \right  \rangle + \sum^{N}_{j=1} \sum^{\#\K_j}_{i=1}   \left \langle g, \frac{\langle g,\phi^{(j)}_i  \rangle }{ \| \nabla \phi^{(j)}_i  \| ^2} \phi^{(j)}_i  \right \rangle 
 \\
= \left \langle g ,  g_0 + \sum^{N}_{j=1} \sum^{\#\K_j}_{i=1}    \frac{  \langle g,\phi^{(j)}_i  \rangle }{ \| \nabla \phi^{(j)}_i  \| ^2} \phi^{(j)}_i  \right \rangle. 
\end{multline*}
Let $g_{j,i} = \frac{  \langle g,\phi^{(j)}_i   \rangle }{ \| \nabla \phi^{(j)}_i  \| ^2} \phi^{(j)}_i \in V_{j,i}$. Then using \Cref{thm:stable_splitting_of_V_J_to_basis},
\begin{align*}
\left \langle g ,  g_0 + \sum^{N}_{j=1} \sum^{\#\K_j}_{i=1}g_{j,i} \right \rangle  
&= \| g \|_{\left( H^1_0(\Omega)\right)^{\#}} \left \lVert \nabla \left(  g_0 + \sum^{N}_{j=1} \sum^{\#\K_j}_{i=1}g_{j,i}   \right) \right \rVert \\
& \leq \| g \|_{\left( H^1_0(\Omega)\right)^{\#}} \frac{1}{c_S^{\frac{1}{2}} \overline{c}_{B}^\frac{1}{2}} \left( \|  \nabla  g_0 \|^2  +  \sum^{N}_{j=1} \sum^{\#\K_j}_{i=1}  \|\nabla g_{j,i}\|^2 \right )^\frac{1}{2} \\
& =  \| g \|_{\left( H^1_0(\Omega)\right)^{\#}} \frac{1}{c_S^{\frac{1}{2}} \overline{c}_{B}^\frac{1}{2}} \left( \|  \nabla  g_0 \|^2  +  \sum^{N}_{j=1} \sum^{\#\K_j}_{i=1}  \frac{  \langle g,\phi^{(j)}_i   \rangle^2 }{ \| \nabla \phi^{(j)}_i  \| ^2} \right )^\frac{1}{2} .
\end{align*}
This yields \eqref{eq:thm_frame_lower_finite}. Taking $N$ to infinity in \eqref{eq:thm_frame_lower_finite} finishes the proof. 
\end{proof}

\begin{thm}\label{thm:frames_finite}
Let $c_{S}$ and $C_{S}$ be the constants from Theorem \ref{thm:stable_splitting_to_V} and $\overline{c}_{B},\overline{C}_{B}$ the constants from \Cref{thm:stable_splitting_into_basis_functions}. Let $J\geq0$ and consider the space $V_J$ with the norm $\|\nabla \cdot \|$. For all $g_J \in V_J^{\#}$,
\begin{multline*}
c_{S} \overline{c}_{B} \left(
\| \nabla  g_0 \|^2  +  \sum^{J}_{j=1} \sum^{\#\K_j}_{i=1}  \frac{  \langle g,\phi^{(j)}_i   \rangle^2 }{ \| \nabla \phi^{(j)}_i  \| ^2 } \right)
\leq \| g_J\|^2_{V_J^{\#}}
\leq C_{S} \overline{C}_{B} \left(
\| \nabla  g_0 \|^2  +  \sum^{J}_{j=1} \sum^{\#\K_j}_{i=1}  \frac{\langle g,\phi^{(j)}_i  \rangle^2 }{\| \nabla \phi^{(j)}_i  \| ^2}\right),
\end{multline*}
where $g_0\in V_0$ is the Riesz representation function of the functional $g$ in the space $V_0$
with respect to the inner product $(u_0, v_0) = \int_{\Omega}\nabla v_0 \cdot \nabla u_0$, $\forall u_0, v_0 \in V_0$.
\end{thm}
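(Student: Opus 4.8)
The plan is to mirror the proof of \Cref{thm:frame_infinite}, replacing the infinite stable-splitting result \Cref{thm:stable_splitting_into_basis_functions} by its finite-level analogue \Cref{thm:stable_splitting_of_V_J_to_basis}. Since only the finitely many levels $j = 0,\ldots,J$ and finitely many basis functions per level occur, all sums are finite and no convergence argument (the limit $N\to+\infty$ used for \Cref{thm:frame_infinite}) is needed; the argument is thus a shortened version of that one. Throughout, $g_0\in V_0$ is the Riesz representation of the restriction of $g_J$ to $V_0$ with respect to the inner product $(u_0,v_0)\mapsto\int_\Omega\nabla u_0\cdot\nabla v_0$, so that $\langle g_J,w_0\rangle = \int_\Omega\nabla g_0\cdot\nabla w_0$ for all $w_0\in V_0$, and $\|g_J\|_{V_J^{\#}}$ denotes the dual norm with respect to $\|\nabla\cdot\|$ on $V_J$.

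For the upper bound I would fix an arbitrary $w_J\in V_J$ with $\|\nabla w_J\| = 1$ together with an arbitrary admissible decomposition $w_J = w_0 + \sum_{j=1}^{J}\sum_{i=1}^{\#\K_j} w_{j,i}$, $w_0\in V_0$, $w_{j,i}\in V_{j,i}$. Writing each $w_{j,i}$ as a scalar multiple of $\phi^{(j)}_i$, namely $w_{j,i} = \bigl(\sign(w_{j,i})\,\|\nabla w_{j,i}\|/\|\nabla\phi^{(j)}_i\|\bigr)\phi^{(j)}_i$, and bounding $|\langle g_J,w_0\rangle|\le\|\nabla g_0\|\,\|\nabla w_0\|$, the triangle inequality followed by the Cauchy--Schwarz inequality for sums gives
\[
|\langle g_J,w_J\rangle|
\le \left(\|\nabla g_0\|^2 + \sum_{j=1}^{J}\sum_{i=1}^{\#\K_j}\frac{\langle g_J,\phi^{(j)}_i\rangle^2}{\|\nabla\phi^{(j)}_i\|^2}\right)^{\!1/2}
\left(\|\nabla w_0\|^2 + \sum_{j=1}^{J}\sum_{i=1}^{\#\K_j}\|\nabla w_{j,i}\|^2\right)^{\!1/2}.
\]
Since the left-hand side is independent of the decomposition, I would pass to the infimum of the second factor over all admissible decompositions and apply the upper bound in \Cref{thm:stable_splitting_of_V_J_to_basis} to replace it by $C_S^{1/2}\overline{C}_B^{1/2}\|\nabla w_J\| = C_S^{1/2}\overline{C}_B^{1/2}$; taking the supremum over all such $w_J$ then yields $\|g_J\|^2_{V_J^{\#}} \le C_S\overline{C}_B\bigl(\|\nabla g_0\|^2 + \sum_{j=1}^{J}\sum_i\langle g_J,\phi^{(j)}_i\rangle^2/\|\nabla\phi^{(j)}_i\|^2\bigr)$.

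For the lower bound I would introduce $Q := \|\nabla g_0\|^2 + \sum_{j=1}^{J}\sum_{i=1}^{\#\K_j}\langle g_J,\phi^{(j)}_i\rangle^2/\|\nabla\phi^{(j)}_i\|^2$ and the functions $g_{j,i} := \bigl(\langle g_J,\phi^{(j)}_i\rangle/\|\nabla\phi^{(j)}_i\|^2\bigr)\phi^{(j)}_i\in V_{j,i}$, which satisfy $\|\nabla g_{j,i}\|^2 = \langle g_J,\phi^{(j)}_i\rangle^2/\|\nabla\phi^{(j)}_i\|^2 = \langle g_J,g_{j,i}\rangle$ and, together with $\langle g_J,g_0\rangle = \|\nabla g_0\|^2$, give $Q = \langle g_J,\,g_0 + \sum_{j=1}^{J}\sum_i g_{j,i}\rangle$. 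The element $w_J^\star := g_0 + \sum_{j=1}^{J}\sum_i g_{j,i}$ lies in $V_J$ (every $\phi^{(j)}_i$ with $j\le J$ belongs to $V_J$), so $Q = \langle g_J,w_J^\star\rangle \le \|g_J\|_{V_J^{\#}}\,\|\nabla w_J^\star\|$. Applying the lower bound in \Cref{thm:stable_splitting_of_V_J_to_basis} to this particular decomposition of $w_J^\star$ gives $c_S\overline{c}_B\|\nabla w_J^\star\|^2 \le \|\nabla g_0\|^2 + \sum_{j,i}\|\nabla g_{j,i}\|^2 = Q$, hence $\|\nabla w_J^\star\| \le (c_S\overline{c}_B)^{-1/2}Q^{1/2}$; combining the two estimates yields $Q \le (c_S\overline{c}_B)^{-1/2}\|g_J\|_{V_J^{\#}}\,Q^{1/2}$, i.e.\ $c_S\overline{c}_B\,Q \le \|g_J\|^2_{V_J^{\#}}$, which is the asserted lower bound.

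There is no genuine obstacle; the only point requiring care (rather than a difficulty) is the bookkeeping around the duality pairing and the Riesz representation: one must observe that $\langle g_J,w_0\rangle = \int_\Omega\nabla g_0\cdot\nabla w_0$ for $w_0\in V_0$, that $\langle g_J,\phi^{(j)}_i\rangle$ and $w_J^\star$ are meaningful because $\phi^{(j)}_i\in V_J$ for all $j\le J$, and that the dual norm $\|g_J\|_{V_J^{\#}}$ is taken with respect to $\|\nabla\cdot\|$ on $V_J$, so that the Cauchy--Schwarz/duality estimates above are exactly the ones needed. Everything else is a routine finite-sum repetition of the proof of \Cref{thm:frame_infinite}.
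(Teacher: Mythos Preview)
Your proposal is correct and follows exactly the approach the paper intends: the paper's own proof consists of the single sentence ``The proof is analogous to the proof of \Cref{thm:frame_infinite},'' and what you have written is precisely that analogous argument, replacing \Cref{thm:stable_splitting_into_basis_functions} by its finite-level counterpart \Cref{thm:stable_splitting_of_V_J_to_basis} and dropping the $N\to\infty$ limit. There is nothing to add.
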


\begin{proof}
The proof is analogous to the proof of \Cref{thm:frame_infinite}.
\end{proof}
\end{appendices}

\bibliography{sn-bibliography}

\end{document}